\newtheorem{theorem}{Theorem}[section]
\newtheorem{proposition}[theorem]{Proposition}
\newtheorem{lemma}[theorem]{Lemma}
\newtheorem{corollary}[theorem]{Corollary}
\theoremstyle{definition}
\newtheorem{definition}[theorem]{Definition}
\newtheorem{remark}[theorem]{Remark}
\numberwithin{equation}{section}
\begin{document}
	
\title [Weakly dissipative HKS equations]{Hyperbolic Keller-Segel equations with sensitivity adjustment in Besov spaces: Global well-posedness and blow-up criteria}\thanks{Corresponding author: Lei Zhang (lei\_zhang@hust.edu.cn)}

\author{Bo Bi}
\address{School of Mathematics and Statistics, Huazhong University of Science and Technology,  Wuhan 430074, Hubei, P.R. China.}
\email{bobi@hust.edu.cn (B. Bi)}

\author{Lei Zhang}
\address{School of Mathematics and Statistics, Hubei Key Laboratory of Engineering Modeling  and Scientific Computing, Huazhong University of Science and Technology,  Wuhan 430074, Hubei, P.R. China.}
\email{lei\_zhang@hust.edu.cn (L. Zhang)}

\keywords{Hyperbolic Keller-Segel equations; Global solutions; Besov spaces; Blow-up criteria.}

\date{\today}
	
\begin{abstract}
This paper focuses on the initial value problem for the hyperbolic Keller-Segel (HKS) equation with sensitivity adjustment in Besov sapces over $\mathbb{R}^d$: $\partial_t u + \nabla \cdot \left(\varrho (t) u (1 - u) \nabla S \right)= 0$, $\Delta S = S - u$, where $\varrho (t) (1 - u)$ denotes the adjustment of the classical chemotaxis sensitively by virtue of a time-dependent function $\varrho$. In the case of $\varrho\equiv 1$, existed results [Zhou et al., \emph{J. Differ. Equ.}, 302 (2021) 662-679] and [Zhang et al., \emph{J. Differ. Equ.}, 334 (2022) 451-489] have shown that the HKS equation  admits local-in-time Besov solution, whereas the global theory in Besov space still remains an unsolved problem. The main novelty of our observation lies in the fact that if the chemotaxis sensitivity is adjusted by the function $\varrho(t)$ with suitable integrability over $[0,\infty)$, then the associated HKS equation possesses a unique global-in-time Besov solution. As an application, we conclude that the HKS equation with weakly dissipation $-\lambda u$ (i.e., a nonzero interaction-related source term) can also be globally solved in the framework of Besov spaces. Moreover, we derive two types of  blow-up criteria for strong solutions in both critical and non-critical Besov spaces, and explicitly characterize the lower bound of blow-up time. These findings reveals how time-dependent parameters (especially, the dissipation parameter $\lambda$) affect the global existence of solutions.
\end{abstract}

\maketitle
\section{Introduction}

\subsection{Motivation}
Chemotaxis, the directed movement of somatic cells, bacteria, unicellular organisms, and multicellular life forms in response to specific chemical gradients, is an ubiquitous biological phenomenon \cite{Hillen2009}. To characterize such phenomena, Keller and Segel \cite{Keller1971,Keller1980} originally proposed the Keller-Segel (KS) equation, which nowadays can be generally  formulated by
\begin{equation}\label{a1}
\left\{
\begin{aligned}
& \partial_t u = \nabla \cdot \left(D_u(u,S)\nabla u - \chi(u,S)u \nabla S\right) + H(u,S),\\
& \tau \partial_t S = D_S(u,S) \Delta S + K(u,S).
\end{aligned}
\right.
\end{equation}
Here, $u$ denotes cell/organism density and $S$ denotes chemoattractant concentration; $H(u,S)$ and $K(u,S)$ are interaction-related source terms. The nonlinear positive-definite functions $D_u(u,S)$ and $D_S(u,S)$ describe cell and chemoattractant diffusivity, respectively. $\chi(u,S)$ is the chemotactic sensitivity, and $\tau \geq 0$ is a relaxation time scale with $1/\tau$ characterizing the equilibrium convergence rate. During the past decades, the KS model \eqref{a1} with different kinds of choice of the parameters have received much attention, we refer to  \cite{Biler1998,Horstmann2001,Jaeger1992,Nagai1997,Perthame2007,Winkler2010,Winkler2013} and their references therein.

In \cite{Hillen2001}, Hillen and Painter  studied the following classical parabolic-elliptic system by taking         $D_u(u,S) = \epsilon$, $\chi(u,S) = 1 - u$, $D_S(u,S) = 1$, $K(u,S) = S - u$, and $H(u,S) = 0$ in \eqref{a1}:
\begin{equation}\label{a2}
\left\{
\begin{aligned}
& \partial_t u -\epsilon \Delta u + \nabla \cdot (u(1-u) \nabla S) = 0, ~\epsilon>0,\\
& \Delta S = S-u.
\end{aligned}
\right.
\end{equation}
For \eqref{a2}, by considering the one-dimensional setting with homogeneous Neumann boundary conditions, Dolak and Schmeiser \cite{Dolak2005} established two results: first, they proved the global well-posedness of the problem under appropriate assumptions on the initial data; second, they showed that as the parameter $\epsilon \to 0$, the corresponding solutions $(u^{\epsilon}, S^{\epsilon})$ converge to the entropy solutions of the so-called hyperbolic Keller-Segel (HKS) equations (due to the absence of viscosity term):
\begin{equation}\label{a3}
\left\{
\begin{aligned}
& \partial_t u + \nabla \cdot (u(1-u) \nabla S) = 0 \quad \textrm{or} \quad \partial_t u + (1 - 2u)  \nabla u \cdot  \nabla S + (u - u^2) \Delta S = 0,\\
& \Delta S = S-u.
\end{aligned}
\right.
\end{equation}
In higher dimensions, Burger, Dolak-Struss and Schmeiser \cite{Burger2008} demonstrated short-time convergence of \eqref{a2} to \eqref{a3} as $\epsilon \to 0$ under suitable conditions, where the $\epsilon \to 0$ limit was established for global-in-time weak solutions. Similar results have also been established in the work \cite{Perthame2009}. Regarding the Cauchy problem of \eqref{a2}, Burger, Francesco and Schmeiser \cite{Burger2006} investigated the global existence and uniqueness of weak solutions, and also analyzed the long time behavior of these solutions. In \cite{Lee2015}, Lee and Liu identified a sub-threshold for finite-time shock formation in \eqref{a3} on $\mathbb{R}$. Most recently, Meng et al. \cite{Meng2024} proved global well-posedness for \eqref{a3} with small equilibrium-near initial data, and constructed initial data leading to finite-time blow-up¡ªall within $H^s$ spaces ($s > 1 + \frac{d}{2}$). Notably, these results are all set in Sobolev spaces.

Within the framework of Besov spaces, Zhou, Zhang and Mu \cite{Zhou2021} established local well-posedness for the Cauchy problem of \eqref{a3} in $B^s_{p,r}$ ($1 \leq p,r \leq \infty$, $s > 1 + \frac{d}{p}$), and further showed that the data-to-solution map is not uniformly continuous. Subsequently, Zhang, Mu and Zhou \cite{Zhang2022} proved the discontinuity of the solution map at $t=0$ in the Besov space $B^s_{2,\infty}$ with $s > \frac{3}{2}$. Moreover, they established a Hadamard local well-posedness result for the high dimensional HKS equation in the larger Besov space $B^{1+\frac{d}{p}}_{p,1}$ ($1 \leq p < \infty$), and derived two blow-up criteria for strong solutions via Littlewood-Paley theory. Let us mention that these last two works mainly consider the local-in-time solvability problem in Besov spaces, while its global well-posedness in suitable Besov spaces is challenging.

In view of the works previously discussed, especially the Cauchy problems addressed in \cite{Zhou2021,Zhang2022}, it is natural to ask the following question:
\begin{itemize}
\item [\textbf{(Q)}:]  What types of conditions ensure that the HKS equations \eqref{a3} possess a unique global Hadamard solution in an appropriate Besov space?
\end{itemize}
In a recent work by Meng et al. \cite{Meng2024}, under the restriction that solutions belong to Sobolev spaces, the authors partially addressed this problem (Q) by assuming that the solutions evolve on the torus $\mathbb{T}^d$, with small initial data in $H^s(\mathbb{T}^d)$ ($s > 1 + \frac{d}{2}$) near the equilibrium state. As far as we can ascertain, the corresponding problem within the framework of Besov space, even when restricted to the torus $\mathbb{T}^d$, has not yet been resolved. A key difficulty stems from the equations' substantial hyperbolic structure, which impairs the application of spatial regularity afforded by the viscosity term in \eqref{a2}.

The \emph{primary objective of the present study} is to provide a partial positive answer to a weaker version of the question (Q), that is, when the chemotactic sensitivity is adjusted by suitable time-dependent integrable functions. More specifically, by considering the chemotactic sensitivity $\xi(u,S)=\varrho (t)(1-u)$ in \eqref{a2}, we focus on the following Cauchy problem associated with the sensitivity-adjusted HKS equation over $\mathbb{R}^d (d \geq 1)$:
\begin{equation}\label{SCNS-c1}
\left\{
\begin{aligned}
& \partial_t u + \nabla \cdot \left(\varrho (t) u (1 - u) \nabla S \right)= 0,   &  x \in \mathbb{R}^d,\ t \geq 0,\\
&  \Delta S = S - u,  & x \in \mathbb{R}^d,\ t \geq 0,\\
& u(0,x) = u_0(x), & x \in \mathbb{R}^d,\ t = 0,
\end{aligned}
\right.
\end{equation}
where $\varrho (t)$ depends only on the $t$-variable. Equivalently, by using the relationship $S = (1-\Delta)^{-1}u$, one can rewrite \eqref{SCNS-c1} as a nonlocal and nonlinear transport equation:
\begin{equation}\label{SCNS-c11}
\left\{
\begin{aligned}
& \partial_t u +  (\varrho(t) - 2\varrho(t)u) \nabla S\cdot \nabla u + (\varrho(t)u-\varrho(t)u^2)\Delta S= 0,   &  x \in \mathbb{R}^d,\ t \geq 0,\\
&   S = (1-\Delta)^{-1}u,  & x \in \mathbb{R}^d,\ t \geq 0,\\
& u(0,x) = u_0(x), & x \in \mathbb{R}^d,\ t = 0.
\end{aligned}
\right.
\end{equation}
Provided that the time-dependent function
$\varrho(t)$ meets the appropriate integrable conditions, we will demonstrate in subsequent sections that  the equation \eqref{SCNS-c1} or \eqref{SCNS-c11} admits a global strong solution within the framework of Besov spaces.

To the best of our knowledge, there are relatively few existing studies that address the KS equations when its chemotactic sensitivity $\chi(u,S)$ (a quantity usually formulated as a function of $u$ and $S$) is adjusted by certain time-, space-, or spacetime-dependent functions. From this perspective, the key findings of this study demonstrate that modifying parameters can substantially change the solution properties of the HKS equation, which is interesting at least from the mathematical perspective of partial differential equations (PDEs). Along this line, it would be valuable to investigate whether similar phenomena exist for other KS equations with singular structures.

\subsection{Related problems}
An interesting observation in this paper is that Eq. \eqref{SCNS-c1} or Eq. \eqref{SCNS-c11} exhibits a close connection to the HKS equation that contains an interaction-associated source term given by $H(u,S)= -\lambda u$ (recall that in \eqref{a2}, $H(u,S)$ is taken to be zero).  In this regard, the HKS equations under consideration can be formulated as
\begin{equation}\label{SCNS-c2}
\left\{
\begin{aligned}
& \partial_t u + (1 - 2u) \nabla S \cdot \nabla u + (u - u^2) \Delta S + \lambda u = 0, & x \in \mathbb{R}^d,\ t \geq 0,\\
&  S = (I - \Delta)^{-1} u,  & x \in \mathbb{R}^d,\ t \geq 0,\\
& u(0,x) = u_0(x), & x \in \mathbb{R}^d,\ t = 0.
\end{aligned}
\right.
\end{equation}
From a mathematical perspective, the source term $\lambda u$ endows the solutions with a damping property. Consequently, there is reason to expect that these solutions exhibit superior characteristics compared to those of the equation \eqref{a3}. In fact, weakly dissipative  PDEs have been extensively investigated in the context of various other types of PDEs, see  for instance  \cite{ji2023optimal,pan2021global,chen2024global,Zhang2021}. The term $\lambda u$ is typically referred to as the \emph{weakly dissipation}, as it induces a dissipative effect that is less pronounced than that of the term $-\epsilon\Delta u$ in \eqref{a2}.

The relationship between \eqref{SCNS-c11} and \eqref{SCNS-c2} can be established by introducing the  transformation:
$\tilde{u}(t,x) = e^{\lambda t}u(t,x) $,
which  serves to simplify \eqref{SCNS-c2}, ultimately reducing it to a HKS equation that contains a time-dependent parameter:
\begin{equation}\label{SCNS-t}
\left\{
\begin{aligned}
& \partial_t \tilde{u} + (e^{- \lambda t} - 2e^{- 2\lambda t}\tilde{u}) \nabla \tilde{S} \cdot \nabla \tilde{u} + (e^{- \lambda t} \tilde{u} - e^{- 2\lambda t}\tilde{u}^2) \Delta \tilde{S} = 0, & x \in \mathbb{R}^d,\ t \geq 0,\\
&  \tilde{S} = (I - \Delta)^{-1} \tilde{u},  & x \in \mathbb{R}^d,\ t \geq 0,\\
& \tilde{u}(0,x) = u_0(x), & x \in \mathbb{R}^d,\ t = 0.
\end{aligned}
\right.
\end{equation}
By comparing \eqref{SCNS-c11} and \eqref{SCNS-t}, a key common feature comes to light: the nonlinear terms in each equation have parameter perturbations that depend solely on time.

 Within the theoretical framework of PDEs, it will be more convenient to integrate equations \eqref{SCNS-c11} and \eqref{SCNS-t} into a unified form as shown below:
\begin{equation}\label{SCNS-t1}
\left\{
\begin{aligned}
& \partial_t u + (\alpha(t) + \beta(t) u) \nabla S \cdot \nabla u +(\gamma(t) u + \xi(t) u^2) \Delta S =0, & x \in \mathbb{R}^d,\ t \geq 0,\\
& S = (I - \Delta)^{-1} u,  & x \in \mathbb{R}^d,\ t \geq 0,\\
& u(0,x) = u_0(x), & x \in \mathbb{R}^d,\ t = 0,
\end{aligned}
\right.
\end{equation}
where $\alpha$, $\beta$, $\gamma$ and $\xi$ are time-dependent parameters.

It can be readily observed that using the following specific parameters in Eq. (1.8) leads to the simplification of this equation into Eq. \eqref{a3}, Eq. \eqref{SCNS-c11} and \eqref{SCNS-t} respectively:
\begin{itemize}
\item [$\bullet$]  For \eqref{a3}:  $\alpha(t) = \beta(t) = \gamma(t) = \xi(t) = 1$.

\item [$\bullet$]  For \eqref{SCNS-c11}: $\alpha(t) = \varrho(t)$,   $\beta(t) = -2 \varrho(t)$, $ \gamma(t) = \varrho(t)$, $ \xi(t) = -\varrho(t)$, $\xi(t) \neq \textrm{constant.}$
\item [$\bullet$] For \eqref{SCNS-t}:  $\alpha(t) = e^{-\lambda t}$, $ \beta(t) = -2 e^{-2\lambda t}$, $ \gamma(t) = e^{-\lambda t}$, $ \xi(t) = e^{-2\lambda t}$.

\end{itemize}
One can conclude that if the equation \eqref{SCNS-t1} can be solved globally in Besov spaces with suitable condition imposed on the parameters $\alpha$, $\beta$, $\gamma$ and $\xi$, then one can also immediately obtain the global solvability results for \eqref{a3}, \eqref{SCNS-c11} and \eqref{SCNS-t}, and hence provide a positive answer to the aforementioned question (Q).
\subsection{Main results}

In the following, we aim to solve the problems associated with the equations \eqref{SCNS-c11} and \eqref{SCNS-t}, by virtue of the powerful tools from the Littlewood-Paley theory and transport theory in Besov spaces.

\begin{theorem}[Non-critical Besov spaces]\label{thm}
Let $d \geq 1$, $1 \leq p,r \leq +\infty$ and $ s>\max\{1+\frac{d}{p} , \frac{3}{2}\}$. Assume that $u_0 \in B^{s}_{p,r}(\mathbb{R}^d)$, and the time-dependent parameters $\alpha, \beta, \gamma, \xi \in L^1([0,\infty);\mathbb{R})$ such that
$$
\int_0^\infty (|\alpha(\tau)| + |\beta(\tau)| + |\gamma(\tau)| + |\xi(\tau)|) d\tau \leq \frac{\ln 2}{12C^3 h^2(1+\|u_0\|_{B^s_{p,r}})},
$$
where
\begin{equation}\label{hx}
\begin{aligned}
h(x) \stackrel{\text{def}}{=} & e^{4C^3 x^2 \int_0^\infty [|\alpha(\tau)| + |\beta(\tau)|] d\tau} \\
& \times \left(x + 8C^3 x^3 \int_0^\infty [|\alpha(\tau)| + |\beta(\tau)| + |\gamma(\tau)| + |\xi(\tau)|] d\tau\right).
\end{aligned}
\end{equation}
Then for any $T>0$, the system \eqref{SCNS-t1} has a unique global strong solution
$$
u \in E^s_{p,r}(T) \stackrel{\text{def}}{=} C([0,T];B^s_{p,r}(\mathbb{R}^d)) \cap C^1([0,T];B^{s-1}_{p,r} (\mathbb{R}^d)).
$$
Moreover, the data-to-solution map $u_0 \to u$ is continuous from a neighborhood of $u_0$ in $B^s_{p,r}(\mathbb{R}^d)$ to $E^{s'}_{p,r}(T)$ for $s' < s$ when $r=\infty$ and $s'=s$ whereas $r < \infty$.
\end{theorem}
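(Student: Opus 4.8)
The plan is to run the classical Danchin-type scheme for nonlocal transport equations in Besov spaces, but calibrated so that the smallness hypothesis upgrades the usual \emph{local} a priori bound to a \emph{global} one. First I would rewrite \eqref{SCNS-t1} as a linear transport equation carrying a zeroth-order reaction term, $\partial_t u + v\cdot\nabla u + c\,u = 0$, where $v = (\alpha(t)+\beta(t)u)\nabla S$ and $c = (\gamma(t)+\xi(t)u)\Delta S$ with $S=(I-\Delta)^{-1}u$; here $\nabla S$ gains one derivative while $\Delta S = S-u$ has the same order as $u$. Then I would construct approximate solutions $\{u^{(n)}\}$ by the iteration $u^{(0)}=u_0$ and $\partial_t u^{(n+1)} + v^{(n)}\cdot\nabla u^{(n+1)} + c^{(n)}u^{(n+1)}=0$, each step being a linear transport problem solvable in $E^s_{p,r}(T)$ by the standard existence theory, with $v^{(n)},c^{(n)}$ built from $u^{(n)}$.

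The crucial and genuinely global step is the uniform a priori bound. Writing $y_n(t)=\|u^{(n)}(t)\|_{B^s_{p,r}}$ and combining the transport estimate in Besov spaces with the algebra property of $B^s_{p,r}$ (available since $s>1+\frac{d}{p}$) and the two-derivative smoothing of $(I-\Delta)^{-1}$, I would arrive at a Gronwall inequality of the form
\[
y_{n+1}(t) \le \Big(\|u_0\|_{B^s_{p,r}} + \int_0^t\|f^{(n)}\|_{B^s_{p,r}}\,d\tau\Big)\exp\Big(C\int_0^t\|\nabla v^{(n)}\|_{B^{s-1}_{p,r}}\,d\tau\Big),
\]
with $\|f^{(n)}\|_{B^s_{p,r}}\lesssim |\gamma|y_n^2+|\xi|y_n^3$ and $\|\nabla v^{(n)}\|_{B^{s-1}_{p,r}}\lesssim |\alpha|y_n+|\beta|y_n^2$. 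Thus $\alpha,\beta$ feed the exponential factor while $\gamma,\xi$ feed the additive part, which is exactly the structure of $h(x)$: its exponent carries $\int(|\alpha|+|\beta|)$ and its polynomial part carries $\int(|\alpha|+|\beta|+|\gamma|+|\xi|)$. I would then run an induction: assuming $y_n(t)\le h(1+\|u_0\|_{B^s_{p,r}})$ for all $t\ge0$, the smallness hypothesis forces the exponential factor below $2$ (whence the $\ln 2$) and keeps the polynomial correction within tolerance, so the same bound propagates to $y_{n+1}$. Since $\alpha,\beta,\gamma,\xi\in L^1([0,\infty))$, the resulting bound is uniform in both $n$ and $T$.

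With the uniform bound in hand, I would show $\{u^{(n)}\}$ is Cauchy in $C([0,T];B^{s-1}_{p,r})$ by writing the transport equation for $u^{(n+1)}-u^{(n)}$, controlling the source differences with the uniform bounds and the smallness, and applying Gronwall at regularity $s-1$. Passing to the limit, Fatou/weak-$*$ compactness gives $u\in L^\infty_T B^s_{p,r}$, and reinserting into the equation upgrades this to $u\in E^s_{p,r}(T)$ for every $T$, hence a genuinely global solution. Uniqueness follows from a $B^{s-1}_{p,r}$ energy estimate for the difference of two solutions, the space $B^{s-1}_{p,r}$ being the natural one where the velocity is Lipschitz. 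For continuous dependence I would use the Bona--Smith argument: mollify the data, exploit the uniform bounds to pass to the limit, and accept the customary endpoint loss, obtaining continuity into $E^{s'}_{p,r}(T)$ with $s'<s$ when $r=\infty$ and $s'=s$ when $r<\infty$.

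The hard part will be calibrating the induction in the uniform-bound step: one must reverse-engineer $h$ and the precise threshold $\frac{\ln 2}{12C^3 h^2(1+\|u_0\|_{B^s_{p,r}})}$ so that the bootstrap is self-consistent for \emph{all} time rather than on a short interval. The subtlety is that the terms $\xi u^2$ and $\beta u\,\nabla S$ make the natural Gronwall inequality genuinely super-linear (quadratic and cubic in $y_n$), so that, absent any control, the a priori bound would escape to infinity in finite time; it is precisely the combination of $L^1([0,\infty))$-in-time integrability of the parameters with the smallness threshold that absorbs these higher powers and prevents blow-up, thereby delivering the global statement.
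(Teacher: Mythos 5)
Your proposal follows essentially the same route as the paper's proof: a Friedrichs-type iteration of linear transport problems, a uniform-in-time bound obtained by induction calibrated so that the smallness hypothesis keeps the exponential Gronwall factor below $2$ (the origin of the $\ln 2$) while the polynomial part absorbs the cubic terms, then Cauchy convergence at regularity $s-1$, passage to the limit by Fatou, and uniqueness/stability via a difference estimate in $B^{s-1}_{p,r}$. The only differences are cosmetic: where you invoke ``Gronwall'' for the convergence step, the paper proves a dedicated iterative lemma (Lemma \ref{lem:RL}, with factorial gains) because the difference inequality is recursive across iterates rather than self-referential, and it obtains continuous dependence from a stability proposition plus viscosity approximation rather than a Bona--Smith argument.
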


\begin{theorem}[Critical Besov spaces]\label{thm1}
Let $d \geq 1$. Assume that $u_0 \in B^{1+\frac{d}{2}}_{2,1}(\mathbb{R}^d)$, and the time-dependent parameters $\alpha, \beta, \gamma, \xi \in L^1([0,\infty);\mathbb{R})$ such that
$$
\int_0^\infty (|\alpha(\tau)| + |\beta(\tau)| + |\gamma(\tau)| + |\xi(\tau)|) d\tau \leq \frac{\ln 2}{12C^3 h^2(1+\|u_0\|_{B^{1+\frac{d}{2}}_{2,1}})},
$$
where $h(x)$ is defined in \eqref{hx}.
Then for any $T>0$, the system \eqref{SCNS-t1} has a unique global strong solution
$$
u \in E^{1+\frac{d}{2}}_{2,1}(T) \stackrel{\text{def}}{=} C([0,T];B^{1+\frac{d}{2}}_{2,1}(\mathbb{R}^d)) \cap C^1([0,T];B^{\frac{d}{2}}_{2,1} (\mathbb{R}^d)).
$$
Moreover, the data-to-solution map $u_0 \to u$ is continuous from a neighborhood of $u_0$ in $B^{1+\frac{d}{2}}_{2,1}(\mathbb{R}^d)$ to $E^{1+\frac{d}{2}}_{2,1}(T)$.
\end{theorem}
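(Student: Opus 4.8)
The plan is to prove Theorem~\ref{thm1} by the same three-step scheme underlying Theorem~\ref{thm} — local existence, a closed nonlinear a priori estimate, and a continuation argument turning the $L^1$-smallness of the coefficients into global control — but executed at the endpoint regularity $B^{1+\frac{d}{2}}_{2,1}(\mathbb{R}^d)$, where each Besov estimate must be invoked in its sharp $r=1$ form. I would first record the mapping properties of $S=(I-\Delta)^{-1}u$: the operator gains two derivatives, so that $\|\nabla S\|_{B^{\frac{d}{2}}_{2,1}}\lesssim\|u\|_{B^{1+\frac{d}{2}}_{2,1}}$ and $\|\nabla^2 S\|_{B^{\frac{d}{2}}_{2,1}}\lesssim\|u\|_{B^{1+\frac{d}{2}}_{2,1}}$, while the identity $\Delta S=S-u$ gives $\|\Delta S\|_{B^{1+\frac{d}{2}}_{2,1}}\lesssim\|u\|_{B^{1+\frac{d}{2}}_{2,1}}$. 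Writing \eqref{SCNS-t1} as a nonlocal transport equation $\partial_t u+V\cdot\nabla u=f$ with $V=(\alpha(t)+\beta(t)u)\nabla S$ and $f=-(\gamma(t)u+\xi(t)u^2)\Delta S$, the crucial structural fact is that both $B^{1+\frac{d}{2}}_{2,1}$ and its companion $B^{\frac{d}{2}}_{2,1}$ are Banach algebras — the latter precisely because $r=1$ at the scaling index $\tfrac{d}{2}=\tfrac{d}{p}$. The product laws then yield
\[
\|\nabla V\|_{B^{\frac{d}{2}}_{2,1}}\lesssim(|\alpha|+|\beta|)\bigl(1+\|u\|_{B^{1+\frac{d}{2}}_{2,1}}\bigr)\|u\|_{B^{1+\frac{d}{2}}_{2,1}},\qquad \|f\|_{B^{1+\frac{d}{2}}_{2,1}}\lesssim\bigl(|\gamma|+|\xi|\,\|u\|_{B^{1+\frac{d}{2}}_{2,1}}\bigr)\|u\|_{B^{1+\frac{d}{2}}_{2,1}}^2.
\]

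Next I would feed these into the transport estimate in the critical Besov space. Writing $X(t):=\|u(t)\|_{B^{1+\frac{d}{2}}_{2,1}}$, the endpoint estimate (valid because $r=1$ gives $B^{\frac{d}{2}}_{2,1}\hookrightarrow L^\infty$ and hence a genuine Lipschitz flow) reads
\[
X(t)\le X(0)\exp\Bigl(C\int_0^t\|\nabla V\|_{B^{\frac{d}{2}}_{2,1}}\,d\tau\Bigr)+\int_0^t\exp\Bigl(C\int_\tau^t\|\nabla V\|_{B^{\frac{d}{2}}_{2,1}}\,ds\Bigr)\|f\|_{B^{1+\frac{d}{2}}_{2,1}}\,d\tau.
\]
Substituting the algebra bounds turns this into a Gronwall-type integral inequality whose exponential rate is $\lesssim(|\alpha|+|\beta|)(1+X)^2$ and whose forcing is $\lesssim(|\gamma|+|\xi|X)X^2$. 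The majorant $h(x)$ of \eqref{hx} is exactly what one obtains by freezing $1+X\le h(1+\|u_0\|)$ inside the exponential and integrating the forcing once; the constant $\ln 2$ and the factor $12C^3$ in the smallness hypothesis are calibrated so that $\exp\!\bigl(4C^3(1+\|u_0\|)^2\!\int_0^\infty(|\alpha|+|\beta|)\,d\tau\bigr)\le 2$, keeping the amplification harmless.

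The global step is then a continuity (bootstrap) argument. Local existence and uniqueness in $B^{1+\frac{d}{2}}_{2,1}$, together with a blow-up criterion of the form $T^\ast<\infty\Rightarrow\int_0^{T^\ast}\|\nabla u\|_{L^\infty}\,d\tau=\infty$, follow by adapting the critical-space local theory of \cite{Zhang2022} for $B^{1+\frac{d}{p}}_{p,1}$, the time-dependent coefficients entering merely as bounded $L^1$ multipliers. Setting $M:=h(1+\|u_0\|)$, I would show that the set $\{t\in[0,T^\ast):X(\tau)\le M \text{ on } [0,t]\}$ is nonempty, relatively closed, and open, the a priori inequality combined with the smallness bound guaranteeing that the right-hand side never reaches $M$; hence $X$ stays bounded on $[0,T^\ast)$, the blow-up criterion cannot trigger, and $T^\ast=\infty$. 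As $T>0$ is arbitrary this produces the global solution in $E^{1+\frac{d}{2}}_{2,1}(T)$.

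Finally, continuity of the data-to-solution map at the endpoint is the point I expect to be the main obstacle, and where the critical case genuinely departs from Theorem~\ref{thm}. A direct difference estimate for $w=u_1-u_2$ costs one derivative through the transport term $(V_2-V_1)\cdot\nabla u_2$, so it closes only in the weaker norm $B^{\frac{d}{2}}_{2,1}$ and cannot by itself give continuity in $B^{1+\frac{d}{2}}_{2,1}$. I would therefore run a Bona--Smith argument: mollify the data as $u_0^{(n)}=S_n u_0$, solve with the uniform a priori bound above, estimate differences of solutions in $B^{\frac{d}{2}}_{2,1}$ while controlling high-frequency tails through the uniform $B^{1+\frac{d}{2}}_{2,1}$ bound, and interpolate to recover convergence in the full critical norm. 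The fact that no logarithmic loss appears — a consequence of the $r=1$ summability in the endpoint transport and commutator estimates — is exactly what allows the continuity to hold in $E^{1+\frac{d}{2}}_{2,1}(T)$ without any loss of regularity.
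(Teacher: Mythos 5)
Your proposal is correct in outline, but it takes a genuinely different route from the paper at the two places where the critical case actually bites. For global existence, the paper never passes through a local-theory-plus-continuation argument: it runs the Friedrichs iteration \eqref{SCNS-d1} globally in time from the start and closes the uniform bound $\sup_n\|u^{(n)}\|_{L^\infty([0,\infty);B^{1+\frac{d}{2}}_{2,1})}\le 2Ch(H_0)$ by induction on $n$, using the smallness hypothesis exactly once per step; your bootstrap achieves the same thing but outsources local well-posedness to an adaptation of \cite{Zhang2022} (note your threshold should be $2Ch(1+\|u_0\|)$ rather than $h(1+\|u_0\|)$, since the constant from the transport estimate multiplies the whole right-hand side — a calibration detail, not a conceptual issue). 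For stability and flow-map continuity, the contrast is sharper: the paper measures differences of iterates and of solutions in $B^{\frac{d}{2}}_{2,\infty}$, where the algebra property fails, and therefore needs the logarithmic interpolation inequality of Lemma \ref{interpolation}(2) together with the Osgood lemma (Lemma \ref{Osgood}), both for the Cauchy-sequence argument and for the uniqueness/stability estimate of Lemma \ref{lem:d1}, which consequently is only H\"older-type; continuity in the full norm is then obtained via Kato's decomposition $u^n=y^n+z^n$ and Danchin's convergence lemma (Lemma \ref{lem:da}). You instead keep all difference estimates in $B^{\frac{d}{2}}_{2,1}$, which \emph{is} a Banach algebra, so plain Gronwall closes — this is legitimate for solutions in the class $E^{1+\frac{d}{2}}_{2,1}$ and avoids Osgood entirely — and you replace the Kato/Danchin step by a Bona--Smith argument, which works here precisely because the $r=1$ summability makes the high-frequency tails $\sum_{q\ge n}2^{q(1+\frac{d}{2})}\|\Delta_q u_0\|_{L^2}$ vanish as $n\to\infty$ (this fails for $r=\infty$). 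What each approach buys: yours is technically lighter and yields Lipschitz-in-weak-norm stability; the paper's Osgood machinery is heavier but proves uniqueness in the strictly larger class $L^\infty([0,\infty);B^{1+\frac{d}{2}}_{2,\infty}\cap Lip)$, and its direct global construction keeps the whole proof self-contained rather than resting on an adapted local theory with time-dependent $L^1$ coefficients.
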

\begin{remark}
As it is shown in Theorem \ref{thm} and Theorem \ref{thm1}, if the time-dependent parameters are integrable over $[0,\infty)$ with suitable bound from above, then the equation \eqref{SCNS-t1} admits a global solution, which partially answers the question (Q) by considering time-dependent parameters. How to improve our results for the case where $\alpha =  \beta=\gamma= \xi = 1$ (which corresponds to \eqref{a3}) remains a highly challenging problem.
\end{remark}

\begin{remark}
Besides the exponential perturbations in \eqref{SCNS-t}, the assumption also contains the rational function in the type of $\frac{1}{a+bt^2}(a,b>0)$, which is widely considered in damped fluid equations, such as \cite{ji2023optimal,pan2021global,chen2024global} and so on.  Moreover, we \emph{conjecture} that the above two theorems can also be established in the Triebel-Lizorkin space by employing the method used in this paper and the technique developed in \cite{chae2002well}, and we shall deal with this problem in the forthcoming works.
\end{remark}

\begin{remark}
Since the proof techniques for the cases $s=1+\frac{d}{2}$ and $s>1+\frac{d}{2}$ differ, despite both utilizing the same approximation systems, we present the main results separately for clarity. Moreover, in \cite{Meng2024}, the authors demonstrated that the HKS equations are ill-posed in $H^{ \frac{3}{2}}(\mathbb{T})$ but well-posed in $H^{s}(\mathbb{T}^d)$ for $s >1+ \frac{d}{2}$. Similar phenomena occurs in the local theory within Besov spaces, as shown in \cite{Zhang2022,Zhou2021}. This insight prompts us to designate $B_{2,1}^{1+\frac{d}{2}}$ as the critical space, where $s=1+\frac{d}{2}$ is the critical index.
\end{remark}

\begin{remark}
Observing that the function
\[
f(x) \stackrel{\text{def}}{=} 12C^3 x e^{8C^3 (1+\|u_0\|_{B^s_{p,r}})^2 x}\left(1+\|u_0\|_{B^s_{p,r}} + 8C^3 (1+\|u_0\|_{B^s_{p,r}})^3 x \right)^2,
\]
is an increasing and continuous function for all $x \in [0,\infty)$ with $f(0) = 0 $. The integrability condition provided in Theorem \ref{thm} and Theorem \ref{thm1} hold true as long as the $L^1$-integral $|\|\alpha\|_{L^1} + \|\beta\|_{L^1} + \|\gamma\|_{L^1} + \|\xi\|_{L^1}$ is sufficiently small, which answers the question of global solvability of HKS in Besov space, see the corollary stated below.
\end{remark}

\begin{remark}
It is known that if the term $\lambda u$ in \eqref{SCNS-c2} is replaced by the stronger dissipation $-\Delta u$, the model admits a unique global solution \cite{Hillen2001,Dolak2005}. However, without the term $-\Delta u$, existing studies have demonstrated that the solution is locally well-posed but blows up in finite time \cite{Zhang2022,Zhou2021}. Our findings indicate that, compared to strong dissipation, the weak dissipation $\lambda u$ is sufficient to ensure the existence of global solutions in Besov space. On the other hand, the weak dissipation $\lambda u$ in \eqref{SCNS-t1} can be seen as a limiting case of the standard logistic source term $H(u)=a u - bu^\alpha$ ($a \geq 0$, $b > 0$, and $\alpha > 1$), by setting $a=0$, $b > 0$, and $\alpha \rightarrow 1$. In chemotaxis PDE theory, the condition $\alpha > 1$ introduces favorable properties for solutions, thus ensuring global existence, while little is known about the limiting case where $a=0$ and $\alpha = 1$. In this context, Theorem \ref{thm} and \ref{thm1} reveal that, for the hyperbolic-type KS model \eqref{a3}, the weaker logistic source term $\lambda u$ ($\lambda > 0$) overcomes the challenges posed by the absence of $-\Delta u$.
\end{remark}

As an application,  we immediately conclude from Theorem \ref{thm} the following result.

\begin{corollary}\label{thm2}
Let $s>\max\{1+\frac{d}{p} , \frac{3}{2}\}$ and $(p,r)\in [0,\infty]^2$. Assume that $u_0 \in B^s_{p,r}(\mathbb{R}^d)$ and dissipative constant $\lambda >0$ is sufficiently large such that
\[
e^{\frac{16C^3}{\lambda} (1+\|u_0\|_{B^s_{p,r}})^2} \left(1+\|u_0\|_{B^s_{p,r}} + \frac{28C^3}{\lambda} (1+\|u_0\|_{B^s_{p,r}})^3\right)^2 \leq \frac{\lambda \ln 2}{42C^3}.
\]
Then the Cauchy problem \eqref{SCNS-c2} or \eqref{SCNS-t} has a unique global strong solution $u \in E^s_{p,r}(\infty)$.
\end{corollary}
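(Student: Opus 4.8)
The plan is to realize \eqref{SCNS-c2} as a special instance of the unified transport equation \eqref{SCNS-t1} and then invoke Theorem \ref{thm} directly; the entire argument is a translation of hypotheses, requiring no new analytic input. First I would apply the exponential change of unknown $\tilde u(t,x) = e^{\lambda t} u(t,x)$ recorded in the introduction, which turns \eqref{SCNS-c2} into \eqref{SCNS-t}, i.e. into \eqref{SCNS-t1} with the explicit time-dependent coefficients $\alpha(t) = \gamma(t) = e^{-\lambda t}$, $\beta(t) = -2e^{-2\lambda t}$ and $\xi(t) = e^{-2\lambda t}$. Since $u \mapsto \tilde u$ is a bijection between solutions of the two problems and the scalar factor $e^{\pm \lambda t}$ is smooth with all $t$-derivatives bounded on $[0,\infty)$, multiplication by it preserves the class $E^s_{p,r}(\infty)$ in both directions; hence a unique global solution of \eqref{SCNS-t} yields, after setting $u = e^{-\lambda t}\tilde u$, a unique global solution of \eqref{SCNS-c2} in the same space, and conversely.

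The next step is to compute the relevant $L^1([0,\infty))$ masses of the coefficients. A direct integration gives $\|\alpha\|_{L^1} = \|\gamma\|_{L^1} = \|\beta\|_{L^1} = \frac1\lambda$ and $\|\xi\|_{L^1} = \frac1{2\lambda}$, so that
\begin{equation*}
\int_0^\infty (|\alpha| + |\beta|)\,d\tau = \frac{2}{\lambda}, \qquad \int_0^\infty (|\alpha| + |\beta| + |\gamma| + |\xi|)\,d\tau = \frac{7}{2\lambda}.
\end{equation*}
Substituting these two masses into the definition \eqref{hx} of $h$ collapses it, at the point $x = 1 + \|u_0\|_{B^s_{p,r}} =: M$, to
\begin{equation*}
h(M) = e^{8C^3 M^2/\lambda}\Bigl(M + \tfrac{28C^3}{\lambda} M^3\Bigr),
\end{equation*}
whence $h^2(M) = e^{16C^3 M^2/\lambda}\bigl(M + \tfrac{28C^3}{\lambda}M^3\bigr)^2$, which is exactly the left-hand side of the hypothesis displayed in the statement of the corollary.

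Finally I would verify that the smallness requirement of Theorem \ref{thm} reduces to the stated condition on $\lambda$. Plugging the second mass $\frac{7}{2\lambda}$ into the theorem's bound $\int_0^\infty(\cdots)\,d\tau \le \frac{\ln 2}{12 C^3 h^2(M)}$ and clearing denominators turns it into $h^2(M) \le \frac{\lambda \ln 2}{42 C^3}$, i.e. precisely the inequality assumed in the corollary. Thus all hypotheses of Theorem \ref{thm} are met for the coefficients of \eqref{SCNS-t}, that problem has a unique global strong solution in $E^s_{p,r}(\infty)$, and transforming back via $u = e^{-\lambda t}\tilde u$ completes the proof. There is no genuine obstacle here: the only things that need care are the bookkeeping that makes the coefficient-masses fit the form of $h$ so cleanly, and the observation that the time-dependent rescaling neither creates nor destroys regularity in $E^s_{p,r}(\infty)$, which is where I would be most careful to justify continuity in time of both $u$ and $\partial_t u$.
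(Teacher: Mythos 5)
Your proposal is correct and follows exactly the route the paper intends (the paper omits the details, stating the corollary follows immediately from Theorem \ref{thm}): specialize to $\alpha(t)=\gamma(t)=e^{-\lambda t}$, $\beta(t)=-2e^{-2\lambda t}$, $\xi(t)=e^{-2\lambda t}$, compute the $L^1$ masses $\frac{2}{\lambda}$ and $\frac{7}{2\lambda}$, and observe that the smallness condition of Theorem \ref{thm} collapses to $h^2(1+\|u_0\|_{B^s_{p,r}})\leq \frac{\lambda\ln 2}{42C^3}$, which is the stated hypothesis. The only blemish is your claim that $e^{\pm\lambda t}$ has all $t$-derivatives bounded on $[0,\infty)$ (false for $e^{+\lambda t}$), but this is harmless since membership in $E^s_{p,r}(\infty)$ only requires continuity of $u$ and $\partial_t u$ on $[0,\infty)$, not boundedness, so the bijection between solution classes still holds.
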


\begin{remark}
As an example, we provide a sufficient condition which satisfies the inequality in Corollary \ref{thm2}:
$
\lambda \geq \frac{120 C^3}{\ln 2} (1 + \|u_0\|_{B^s_{p,r}}  )^2.
$
Observing that, for any given $\lambda >0$, Corollary \ref{thm2} with this inequality actually gives the \emph{small data global-in-time result} for the system \eqref{SCNS-t} in Besov spaces, where the initial data satisfies $\|u_0\|_{B^s_{p,r}} \leq \sqrt{\frac{\lambda \ln 2}{120 C^3}} -1$. The case in the critical Besov space $B^{1+\frac{d}{2}}_{2,1}$ is the same as that in the non-critical Besov space, and further elaboration will be omitted here.
\end{remark}

Our second goal in the present paper is to investigate the finite time blow-up regime for the system \eqref{SCNS-t1} in Besov spaces, which in some sense tell us how the time-dependent parameters $\alpha(t)$, $\beta(t)$, $\gamma(t)$ and $\xi(t)$ affect the singularity formation.

\begin{theorem}\label{blow-up_1}
Assume the parameters $\alpha$, $\beta$, $\gamma$, $\xi \in L^1_{loc}(0,\infty; \mathbb{R})$, and the initial data $u_0 \in B^{1+\frac{d}{2}}_{2,1}(\mathbb{R}^d)$. If the corresponding solution $u$ blows up at finite time $T^*$, then
\[
\int_{0}^{T^{*}} (|\alpha(r)| + |\beta(r)| + |\gamma(r)| + |\xi(r)|) (\| u \|^2_{\dot{B}^0_{\infty,1}} + \| \nabla u \|^2_{\dot{B}^0_{\infty,1}})dr = \infty,
\]
and the blow-up time $T^{*}$ is estimated as follows
\[
T^{*} \geq T(u_0) \overset{\cdot}{=} \sup_{t > 0} \left\{\int_{0}^{t} (|\alpha(r)| + |\beta(r)| + |\gamma(r)| + |\xi(r)|)dr \leq \frac{1}{C (1+\| u_0 \|_{B^{1+\frac{d}{2}}_{2,1}})^2}\right\}.
\]
\end{theorem}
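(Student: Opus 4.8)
The plan is to run a contrapositive continuation argument resting on a single a priori estimate for $\|u(t)\|_{B^{1+\frac d2}_{2,1}}$ transported along the flow. First I would recast \eqref{SCNS-t1} as the nonlocal transport equation $\partial_t u + a\cdot\nabla u = f$, with velocity field $a=(\alpha(t)+\beta(t)u)\nabla S$ and source $f=-(\gamma(t)u+\xi(t)u^2)\Delta S$, exploiting the algebraic identity $\Delta S=u-S$ for $S=(I-\Delta)^{-1}u$. Localizing with the Littlewood--Paley blocks $\Delta_j$, running the $L^2$ energy estimate on each $\Delta_j u$ (the transport part contributes $\tfrac12\|\nabla\cdot a\|_{L^\infty}\|\Delta_j u\|_{L^2}^2$ after integration by parts), and summing the dyadic pieces against the weight $2^{j(1+\frac d2)}$ in $\ell^1$, I obtain
\[
\frac{d}{dt}\|u\|_{B^{1+\frac d2}_{2,1}}\ \le\ C\|\nabla a\|_{L^\infty}\|u\|_{B^{1+\frac d2}_{2,1}}+\sum_{j}2^{j(1+\frac d2)}\big\|[\Delta_j,a\cdot\nabla]u\big\|_{L^2}+\|f\|_{B^{1+\frac d2}_{2,1}}.
\]

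The decisive structural input is the smoothing of $(I-\Delta)^{-1}$: $\nabla S=\nabla(I-\Delta)^{-1}u$ and $\nabla^2 S=\nabla^2(I-\Delta)^{-1}u$ are images of $u$ under Fourier multipliers of order $-1$ and $0$, hence bounded on $\dot B^0_{\infty,1}$, so $\|\nabla S\|_{L^\infty}+\|\nabla^2 S\|_{L^\infty}\lesssim\|u\|_{\dot B^0_{\infty,1}}$. Writing $\nabla a=\beta\,\nabla u\otimes\nabla S+(\alpha+\beta u)\nabla^2 S$ and using $\dot B^0_{\infty,1}\hookrightarrow L^\infty$ with Young's inequality gives the Lipschitz bound
\[
\|\nabla a\|_{L^\infty}\lesssim(|\alpha|+|\beta|)\big(\|u\|^2_{\dot B^0_{\infty,1}}+\|\nabla u\|^2_{\dot B^0_{\infty,1}}\big)+|\alpha|\,\|u\|_{\dot B^0_{\infty,1}},
\]
which is quadratic in the low-order norms. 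For the source, the refined product law $\|\phi\psi\|_{B^{1+\frac d2}_{2,1}}\lesssim\|\phi\|_{L^\infty}\|\psi\|_{B^{1+\frac d2}_{2,1}}+\|\psi\|_{L^\infty}\|\phi\|_{B^{1+\frac d2}_{2,1}}$, together with $\|\Delta S\|_{B^{1+\frac d2}_{2,1}}\lesssim\|u\|_{B^{1+\frac d2}_{2,1}}$ and $\|\Delta S\|_{L^\infty}\lesssim\|u\|_{\dot B^0_{\infty,1}}$, yields $\|f\|_{B^{1+\frac d2}_{2,1}}\lesssim(|\gamma|\,\|u\|_{\dot B^0_{\infty,1}}+|\xi|\,\|u\|^2_{\dot B^0_{\infty,1}})\|u\|_{B^{1+\frac d2}_{2,1}}$. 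The commutator sum is controlled by the standard Besov commutator estimate, and here the gain of one derivative in $\nabla S$ is again used so that each genuinely quadratic contribution carries the factor $\|u\|^2_{\dot B^0_{\infty,1}}+\|\nabla u\|^2_{\dot B^0_{\infty,1}}$, the remaining pieces being at most linear in the low-order norms times $\|u\|_{B^{1+\frac d2}_{2,1}}$.

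Collecting these bounds produces, with $g:=|\alpha|+|\beta|+|\gamma|+|\xi|$,
\[
\frac{d}{dt}\|u\|_{B^{1+\frac d2}_{2,1}}\ \le\ C\,g(t)\,\Big(1+\|u\|^2_{\dot B^0_{\infty,1}}+\|\nabla u\|^2_{\dot B^0_{\infty,1}}\Big)\|u\|_{B^{1+\frac d2}_{2,1}}.
\]
For the blow-up criterion I argue by contradiction: if the integral $\int_0^{T^*}g\,(\|u\|^2_{\dot B^0_{\infty,1}}+\|\nabla u\|^2_{\dot B^0_{\infty,1}})\,dr$ is finite, then since $g\in L^1_{loc}$ forces $\int_0^{T^*}g\,dr<\infty$, the Cauchy--Schwarz inequality also bounds $\int_0^{T^*}g\,(\|u\|_{\dot B^0_{\infty,1}}+\|\nabla u\|_{\dot B^0_{\infty,1}})\,dr$; Gronwall's lemma then keeps $\|u(t)\|_{B^{1+\frac d2}_{2,1}}$ bounded up to $T^*$, contradicting the continuation criterion underlying the local theory (finite-time blow-up is equivalent to $\|u\|_{B^{1+\frac d2}_{2,1}}\to\infty$). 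For the lower bound on $T^*$ I instead dominate the low-order norms by the high-order one, $\|u\|_{\dot B^0_{\infty,1}}+\|\nabla u\|_{\dot B^0_{\infty,1}}\lesssim\|u\|_{B^{1+\frac d2}_{2,1}}$, to close the cubic differential inequality $y'\le C\,g(t)(1+y)^3$ with $y=\|u\|_{B^{1+\frac d2}_{2,1}}$; integrating $(1+y)^{-3}$ shows that $y$ stays finite as long as $\int_0^t g\,dr\le \tfrac{1}{C(1+\|u_0\|_{B^{1+\frac d2}_{2,1}})^2}$, which is exactly $T(u_0)$. The main obstacle is the second step: organizing the commutator and paraproduct terms so that every quadratic contribution genuinely carries the squared low-order factor rather than an uncontrolled high-order norm; this is precisely where the smoothing of $(I-\Delta)^{-1}$, i.e. the boundedness of the zeroth-order operator $\nabla^2(I-\Delta)^{-1}$ on $\dot B^0_{\infty,1}$, is indispensable.
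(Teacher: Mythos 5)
Your proposal follows essentially the same route as the paper's proof: dyadic $L^2$ energy estimates on the transport form of the equation, commutator and product (Moser) bounds exploiting the smoothing of $(I-\Delta)^{-1}$ to reach the key differential inequality $\frac{d}{dt}\|u\|_{B^{1+\frac d2}_{2,1}}\le C g(t)\bigl(1+\|u\|^2_{\dot B^0_{\infty,1}}+\|\nabla u\|^2_{\dot B^0_{\infty,1}}\bigr)\|u\|_{B^{1+\frac d2}_{2,1}}$, then Gronwall plus the continuation criterion for the blow-up condition, and the cubic inequality $y'\le Cg(t)(1+y)^3$ (via domination of the low-order norms by $\|u\|_{B^{1+\frac d2}_{2,1}}$) for the lower bound $T^*\ge T(u_0)$. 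The only variation is cosmetic: you quantify the smoothing of $(I-\Delta)^{-1}$ through boundedness of the order $-1$ and order $0$ multipliers on $\dot B^0_{\infty,1}$, whereas the paper uses the $L^1$ bounds on the Bessel kernel and Young's convolution inequality before invoking $\dot B^0_{\infty,1}\hookrightarrow L^\infty$.
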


Our third goal in the paper is to construct the blow-up criteria with the initial data in $B^{s}_{p,r}$ for $s>1+\frac{d}{p}$. The following theorem presents this result.
\begin{theorem}\label{blow-up_2}
Assume the parameters $\alpha$, $\beta$, $\gamma$, $\xi \in L^1_{ {loc}}(0,\infty; \mathbb{R})$. Let $s>1+\frac{d}{p}$ with $d \geq 1$, $1 \leq p \leq \infty$ and $1 \leq r \leq \infty$. Assume that $u \in C([0,T^{*});B^s_{p,r}(\mathbb{R}^d))$ is the unique solution to system \eqref{SCNS-t1} with respect to the initial data $u_0 \in B^s_{p,r}(\mathbb{R}^d)$. If the corresponding solution $u$ blows up in the finite time $T^{*}$, then
\[
\int_{0}^{T^{*}} (|\alpha(r)| + |\beta(r)| + |\gamma(r)| + |\xi(r)|) (\| u \|^2_{\dot{B}^0_{\infty,2}} + \| \nabla u \|^2_{\dot{B}^0_{\infty,2}}) \, dr = \infty,
\]
and the blow-up time $T^{*}$ is estimated as follows
\[
T^{*} \geq T'(u_0) \overset{\cdot}{=} \sup_{t > 0} \left\{ \int_{0}^{t} (|\alpha(r)| + |\beta(r)| + |\gamma(r)| + |\xi(r)|) \, dr \leq \frac{1}{C\left(e + \|u_0\|_{B^s_{p,r}}\right)^6} \right\}.
\]
\end{theorem}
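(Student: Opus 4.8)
The plan is to argue by \emph{contraposition}: I will show that if
\[
\int_0^{T^*} g(r)\big(\|u\|_{\dot B^0_{\infty,2}}^2 + \|\nabla u\|_{\dot B^0_{\infty,2}}^2\big)\,dr < \infty, \qquad g := |\alpha| + |\beta| + |\gamma| + |\xi|,
\]
then $\|u(t)\|_{B^s_{p,r}}$ stays bounded as $t \uparrow T^*$, so that the continuation criterion underlying the local theory forbids a singularity at $T^*$. First I recast \eqref{SCNS-t1} as the nonlocal transport equation $\partial_t u + V\cdot\nabla u = f$ with drift $V = (\alpha(t) + \beta(t)u)\nabla S$ and source $f = -(\gamma(t) u + \xi(t) u^2)\Delta S$, where $S = (I-\Delta)^{-1}u$. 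Crucially, $\nabla^2 S$ and $\Delta S = S - u$ are order-zero Fourier multipliers of $u$ while $\nabla S$ is one order smoother, so all of them are controlled in $\dot B^0_{\infty,2}$ by $\Lambda(t) := \|u\|_{\dot B^0_{\infty,2}} + \|\nabla u\|_{\dot B^0_{\infty,2}}$.

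Next I localise in frequency. Applying the Littlewood--Paley blocks $\Delta_j$, commuting them past $V\cdot\nabla$, and invoking the commutator estimate for $[\Delta_j, V\cdot\nabla]u$ via Bony's decomposition yields a differential inequality whose leading part is $C\|\nabla V\|_{L^\infty}\|u\|_{B^s_{p,r}} + C\|f\|_{B^s_{p,r}}$, the remaining commutator terms being of lower regularity order. Because $s > 1 + \frac{d}{p}$, the space $B^{s}_{p,r}$ is an algebra and the product/multiplier laws reduce $\|f\|_{B^s_{p,r}}$ and $\|\nabla V\|_{L^\infty}$ to products of $\|u\|_{B^s_{p,r}}$ with $L^\infty$-norms of $u,\nabla u,\nabla S,\nabla^2 S,\Delta S$. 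The decisive point is that $\dot B^0_{\infty,2}\not\hookrightarrow L^\infty$, so I trade each such $L^\infty$-norm for its $\dot B^0_{\infty,2}$-counterpart at a logarithmic cost through the interpolation inequality $\|w\|_{L^\infty} \le C\|w\|_{\dot B^0_{\infty,2}}\sqrt{\log\big(e + \|w\|_{B^s_{p,r}}\big)}$, proved by splitting the Littlewood--Paley sum, using Cauchy--Schwarz on the central block and the embedding on the tails, and optimising the cutoff. Feeding this in and collecting terms gives
\[
\frac{d}{dt}\|u\|_{B^s_{p,r}} \le C g(t)\,\Lambda^2\,\log\!\big(e + \|u\|_{B^s_{p,r}}\big)\|u\|_{B^s_{p,r}} + C g(t)\,\Lambda\sqrt{\log\!\big(e+\|u\|_{B^s_{p,r}}\big)}\,\|u\|_{B^s_{p,r}}.
\]

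To close the criterion I set $Z(t) := \log(e + \|u(t)\|_{B^s_{p,r}})$; dividing by $e + \|u\|_{B^s_{p,r}}$ and applying Young's inequality to the cross term gives $Z' \le C g(\Lambda^2 + 1)(Z + 1)$, whence Gronwall yields $Z(t) + 1 \le (Z(0)+1)\exp\!\big(C\int_0^t g(\Lambda^2+1)\,d\tau\big)$. Since $T^* < \infty$ and $g \in L^1_{loc}$ force $\int_0^{T^*} g < \infty$, finiteness of $\int_0^{T^*} g\Lambda^2$ keeps $Z$ — hence $\|u\|_{B^s_{p,r}}$ — bounded, contradicting blow-up; as $\Lambda^2$ is comparable to $\|u\|_{\dot B^0_{\infty,2}}^2 + \|\nabla u\|_{\dot B^0_{\infty,2}}^2$, this is exactly the asserted divergence. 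For the lower bound on $T^*$ I no longer optimise the logarithm: bounding $\Lambda(t) \le C\|u(t)\|_{B^s_{p,r}}$ via the embeddings $B^s_{p,r}, B^{s-1}_{p,r} \hookrightarrow \dot B^0_{\infty,2}$ (valid for $s > 1 + \frac{d}{p}$) reduces the estimate to the separable inequality $y' \le C g\, y^3 \log(e+y)$ with $y = \|u\|_{B^s_{p,r}}$. A continuity/bootstrap argument then shows the solution persists while $\int_0^t g$ stays below a threshold comparable to $\big(\|u_0\|_{B^s_{p,r}}^2 \log(e+\|u_0\|_{B^s_{p,r}})\big)^{-1}$, which is bounded below by $\big(C(e+\|u_0\|_{B^s_{p,r}})^6\big)^{-1}$; this yields $T^* \ge T'(u_0)$, the exponent $6$ being a convenient simplification of the sharper logarithmic threshold.

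The \emph{main obstacle} is precisely the failure of $\dot B^0_{\infty,2}\hookrightarrow L^\infty$, which makes the logarithmic interpolation unavoidable and distinguishes this non-critical statement from the critical one of Theorem \ref{blow-up_1}, where $\dot B^0_{\infty,1}\hookrightarrow L^\infty$ removes the logarithm. Handling it requires two delicate points: first, the genuinely cubic source $u^2\Delta S$, whose $B^s_{p,r}$-norm produces the leading $\Lambda^2\log$ term and must be organised so that only the squared quantity $\Lambda^2$ survives in the criterion, with all remaining contributions absorbed through the a priori finiteness of $\int_0^{T^*} g$; and second, the commutator and product estimates at the high regularity $s > 1 + \frac{d}{p}$, where the time-dependent coefficients $\alpha,\beta,\gamma,\xi$ must be factored out cleanly and the nonlocal operator $(I-\Delta)^{-1}$ routed into $\dot B^0_{\infty,2}$ through its multiplier bounds.
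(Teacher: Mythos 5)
Your proposal follows essentially the same route as the paper for the divergence criterion: Littlewood--Paley localization of the transport form of the equation, the commutator estimate (Lemma \ref{ce}) and Moser estimates (Lemma \ref{me}) to reach
$\frac{d}{dt}\|u\|_{B^s_{p,r}} \le C g(t)\bigl(1+\|u\|^2_{L^\infty}+\|\nabla u\|^2_{L^\infty}\bigr)\|u\|_{B^s_{p,r}}$,
a Brezis--Gallouet--Wainger inequality to trade $L^\infty$ for $\dot{B}^0_{\infty,2}$ at a $\log^{1/2}(e+\|u\|_{B^s_{p,r}})$ cost, and a Gronwall argument on a logarithmic quantity (your $Z'\le Cg(\Lambda^2+1)(Z+1)$ is the paper's bound $\frac{d}{dt}\log\log(e^2+\|u\|^2_{B^s_{p,r}})\le Cg(1+\Lambda^2)$ in slightly different clothing). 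Two caveats. First, your interpolation inequality must carry the additive constant, i.e. $\|w\|_{L^\infty}\le C\bigl(1+\|w\|_{\dot{B}^0_{\infty,2}}\log^{1/2}(e+\|w\|_{B^s_{p,r}})\bigr)$: the homogeneous $\ell^2$ norm cannot control the low-frequency part of $w$ in an $\ell^1$ sense, so the version without the ``$1+$'' is false; your scheme absorbs the resulting extra $Cg\|u\|_{B^s_{p,r}}$ term harmlessly, so this is only a precision issue. Second, and more substantively, your lower bound on $T^*$ hinges on the embeddings $B^s_{p,r},\,B^{s-1}_{p,r}\hookrightarrow\dot{B}^0_{\infty,2}$, which are \emph{false} for $p=\infty$: a smoothed Heaviside function lies in $B^s_{\infty,r}$ for all $s$ yet its homogeneous blocks satisfy $\|\dot{\Delta}_j u\|_{L^\infty}\ge c>0$ for every $j<0$, so it is not in $\dot{B}^0_{\infty,2}$. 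The paper avoids this by routing the lower bound through $\dot{B}^0_{\infty,\infty}$ (into which $L^\infty$ always embeds), using $\log(e^2+x)\le e^2+x$ to arrive at the polynomial inequality $y'\lesssim g(e+y)^7$ and solving it explicitly to get the threshold $1/\bigl(C(e+\|u_0\|_{B^s_{p,r}})^6\bigr)$; your ODE $y'\lesssim g\,y^3\log(e+y)$ gives a sharper (logarithmic) threshold that indeed dominates the stated one, but only on the range $p<\infty$. Since the theorem as stated allows $p=\infty$ (the paper itself is inconsistent here, restricting to $p<\infty$ both in the Section 5 preamble and when invoking the Kozono-type inequality), you should either restrict to $p<\infty$ or switch to the paper's $\dot{B}^0_{\infty,\infty}$ route for the lifespan estimate.
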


\begin{remark}

The blow-up criteria in Theorem \ref{blow-up_1} does not contradict Theorem \ref{thm}, which indicates that the $L^1$-integrability of parameters $\alpha$, $\beta$, $\gamma$, and $\xi$ determines the behavior of solutions to system \eqref{SCNS-t1}. Specifically, if $\| \alpha\|_{L^1(0,\infty)} + \| \beta \|_{L^1(0,\infty)} + \| \gamma \|_{L^1(0,\infty)} + \| \xi \|_{L^1(0,\infty)} = \frac{1}{2C (1+\| u_0 \|_{B^{1+\frac{d}{2}}_{2,1}})^2}$, then Theorem \ref{blow-up_1} implies $T^* = T(u_0) = \infty$, meaning the solution $u$ exists globally. The specific blow-up time can be determined by applying the parameters to Theorem \ref{blow-up_1}.

\end{remark}

The remaining of this paper is organized as follows. Sections 3 and 4 are devoted to proving the global Hadamard well-posedness of the damped system \eqref{SCNS-t1} in non-critical Besov spaces and critical Besov spaces, respectively. In section 5, we derive two kinds of blow-up criteria for the strong solutions to the system \eqref{SCNS-t1}. The lower bound of the blow-up time are also addressed. In appendix, we recall the Littlewood-Paley theory, and some well-known results of the transport theory in Besov spaces.

\section{Global well-posedness in non-critical Besov Spaces}
In this section, we mainly focus on the proof of Theorem \ref{thm}, which will be divided into the following several steps. Since the proof of Corollary \ref{thm2} is completely same to Theorem \ref{thm}, we shall omit the detail here.

\subsection{Approximate solutions}
We construct the approximate solutions via the Friedrichs iterative method. Starting from $u^{(1)} \overset{\cdot}{=} S_1 u_0$, we recursively define a sequence of functions $(u^{(n)})_{n \geq 1}$ by solving the following linear transport equations
\begin{equation}\label{SCNS-t2}
\left\{
\begin{aligned}
& \partial_t u^{(n+1)} + (\alpha(t) + \beta(t) u^{(n)}) \nabla S^{(n)} \cdot \nabla u^{(n+1)} = -(\gamma(t) u^{(n)} + \xi(t) (u^{(n)})^2) \Delta S^{(n)},\\
& S^{(n)} = (I - \Delta)^{-1} u^{(n)},\\
& u^{(n+1)}\big|_{t = 0} = S_{n+1}u_0.
\end{aligned}
\right.
\end{equation}
Assuming that $u^{(n)} \in E^{s}_{p,r}(T)$ for any positive $T$, it is seen that the right hand side of the system \eqref{SCNS-t2} belongs to $L^1_{loc}([0,\infty);B^{s}_{p,r})$ since the parameters $\alpha,\beta,\gamma,\xi \in L^1([0,\infty);\mathbb{R})$. Hence, applying Lemma \ref{euth} ensures that the Cauchy problem \eqref{SCNS-t2} has a global solution $u^{(n+1)}$ which belongs to $E^{s}_{p,r}(T)$ for any positive $T$.

\subsection{Uniform bound}
If $s>1+\frac{d}{p}$, applying the Lemma \ref{prior estimates} to \eqref{SCNS-t2}, we have:
\begin{equation}\label{SCNS-t3}
\begin{aligned}
  \|u^{(n+1)}(t)\|_{B^s_{p,r}} & \leq \|S_{n+1} u_0\|_{B^s_{p,r}} + \int_0^t \|[\gamma(\tau)u^{(n)} + \xi(\tau)(u^{(n)})^2]\Delta S^{(n)}\|_{B^s_{p,r}} d\tau \\
 &\quad + C \int_0^t V'(\tau) \|u^{(n+1)}(\tau)\|_{B^s_{p,r}} d\tau,\\
\end{aligned}
\end{equation}
where $V(t) = \int_0^t \|\nabla [(\alpha(\tau) + \beta(\tau) u^{(n)}) \nabla S^{(n)}](\tau)\|_{B^{s-1}_{p,r}}d\tau$. For the terms on the right-hand side \eqref{SCNS-t3}, we first get from the property of $\chi(\cdot)$ that $\|S_{n+1}u_0\|_{B^s_{p,r}} \le C\|u_0\|_{B^s_{p,r}}$, for some positive constant independent of $n$. Since $B^s_{p,r}$ is a Banach algebra for any $s>1+\frac{d}{p}$, and the operator $(I - \Delta)^{-1}$ is a $S^{-2}$-multiplier, we have
\begin{equation}
\begin{aligned}
\|[\gamma(\tau)u^{(n)} + \xi(\tau)(u^{(n)})^2]\Delta S^{(n)}\|_{B^s_{p,r}}
& \le C \|\gamma(\tau)u^{(n)} + \xi(\tau)(u^{(n)})^2\|_{B^s_{p,r}} \cdot \|(I - \Delta)^{-1} \Delta u^{(n)}\|_{B^s_{p,r}} \\
& \le C [|\gamma(\tau)| \cdot \|u^{(n)}\|_{B^s_{p,r}} + |\xi(\tau)| \cdot \|u^{(n)}\|_{B^s_{p,r}}^2 ] \cdot \|\Delta u^{(n)}\|_{B^{s-2}_{p,r}} \\
& \le C [|\gamma(\tau)| \cdot \|u^{(n)}\|_{B^s_{p,r}}^2 +  |\xi(\tau)| \cdot \|u^{(n)}\|_{B^s_{p,r}}^3 ], \notag
\end{aligned}
\end{equation}
and
\begin{equation}
\begin{aligned}
\|\nabla [(\alpha(\tau) + \beta(\tau) u^{(n)}) \nabla S^{(n)}]\|_{B^{s-1}_{p,r}}
& \le C \|[(\alpha(\tau) + \beta(\tau) u^{(n)}) \cdot (I - \Delta)^{-1} \nabla u^{(n)}]\|_{B^{s}_{p,r}} \\
& \le C [|\alpha(\tau)| + |\beta(\tau)| \cdot \|u^{(n)}\|_{B^s_{p,r}} ] \cdot \|\nabla u^{(n)}\|_{B^{s-2}_{p,r}} \\
& \le C [|\alpha(\tau)| \cdot \|u^{(n)}\|_{B^s_{p,r}} + |\beta(\tau)| \cdot \|u^{(n)}\|_{B^s_{p,r}}^2 ]. \notag
\end{aligned}
\end{equation}
Plugging the last two estimates into \eqref{SCNS-t3} and using the Young's inequality lead to
\begin{equation}
\begin{aligned}
  \|u^{(n+1)}(t)\|_{B^s_{p,r}} & \leq C \|u_0\|_{B^s_{p,r}} + C \int_0^t [|\gamma(\tau)| \cdot \|u^{(n)}(\tau)\|_{B^s_{p,r}}^2 + |\xi(\tau)| \cdot \|u^{(n)}(\tau)\|_{B^s_{p,r}}^3 ] d\tau \\
& \quad + C \int_0^t [|\alpha(\tau)| \cdot \|u^{(n)}(\tau)\|_{B^s_{p,r}} + |\beta(\tau)| \cdot \|u^{(n)}(\tau)\|_{B^s_{p,r}}^2 ] \cdot \|u^{(n+1)}(\tau)\|_{B^s_{p,r}}  d\tau,\\
& \leq C \|u_0\|_{B^s_{p,r}} + C \int_0^t [|\gamma(\tau)| + |\xi(\tau)|](1+\|u^{(n)}(\tau)\|_{B^s_{p,r}})^3 d\tau \\
& \quad + C \int_0^t [|\alpha(\tau)| + |\beta(\tau)|](1+\|u^{(n)}(\tau)\|_{B^s_{p,r}})^2 \cdot \|u^{(n+1)}(\tau)\|_{B^s_{p,r}} d\tau. \notag
\end{aligned}
\end{equation}
Using Gronwall lemma to last inequality leads to
\begin{equation}\label{SCNS-t4}
\begin{aligned}
  1 & + \|u^{(n+1)}(t)\|_{B^s_{p,r}} \leq C e^{C \int_0^t [|\alpha(\tau)| + |\beta(\tau)|](1+\|u^{(n)}(\tau)\|_{B^s_{p,r}})^2 d\tau} \\
& \times \left(1 + \|u_0\|_{B^s_{p,r}} + \int_0^t [|\gamma(\tau)| + |\xi(\tau)|](1+\|u^{(n)}(\tau)\|_{B^s_{p,r}})^3 d\tau\right).
\end{aligned}
\end{equation}

Define
\begin{equation}
  H_0 \stackrel{\text{def}}{=} H^{(n)}(0)=1+\|u_0\|_{B^s_{p,r}}, \notag
\end{equation}
and
\begin{equation}
  H^{(n)}(t) \stackrel{\text{def}}{=} 1+\|u^{(n)}(t)\|_{B^s_{p,r}},
\quad \text{for all } n \geq 1. \notag
\end{equation}
It follows from the inequality \eqref{SCNS-t4} that
\begin{equation}\label{SCNS-t5}
\begin{aligned}
  H^{(n+1)}(t) & \leq C e^{C \int_0^t [|\alpha(\tau)| + |\beta(\tau)|](H^{(n)}(\tau))^2 d\tau} \\
& \quad \times [  H_0  + \int_0^t [|\gamma(\tau)| + |\xi(\tau)|](H^{(n)}(\tau))^3 d\tau].
\end{aligned}
\end{equation}
Notice that the iterative inequality \eqref{SCNS-t5} contains additional factor $|\alpha(\tau)| + |\beta(\tau)|$ and $|\gamma(\tau)| + |\xi(\tau)|$, the classical method used in \cite{Danchin2001} is inapplicable in present case. To overcome this difficulty, we use another iterative method to derive the uniform bound.

For $n =1$, it follows from \eqref{SCNS-t5} that
\begin{equation}\label{SCNS-t20}
\begin{aligned}
\sup_{t \in [0, \infty)} H^{(1)}(t) & \leq C e^{C \int_0^\infty [|\alpha(\tau)| + |\beta(\tau)|](2C H_0)^2 d\tau}    [H_0  + \int_0^\infty [|\gamma(\tau)| + |\xi(\tau)|](2C H_0)^3 d\tau] \\
& \leq 2C e^{4C^3 {H_0}^2 \int_0^\infty [|\alpha(\tau)| + |\beta(\tau)|] d\tau} (H_0 + 8C^3 {H_0}^3 \int_0^\infty [|\gamma(\tau)| + |\xi(\tau)|] d\tau)  \\
& \leq 2C e^{4C^3 {H_0}^2 \int_0^\infty [|\alpha(\tau)| + |\beta(\tau)|] d\tau} \\
& \quad \times (H_0 + 8C^3 {H_0}^3 \int_0^\infty [|\alpha(\tau)| + |\beta(\tau)| + |\gamma(\tau)| + |\xi(\tau)|] d\tau) \\
& \doteq 2C h(H_0),
\end{aligned}
\end{equation}
where
$
  h(x) \stackrel{\text{def}}{=} e^{4C^3 x^2 \int_0^\infty [|\alpha(\tau)| + |\beta(\tau)|] d\tau}(x + 8C^3 x^3 \int_0^\infty [|\alpha(\tau)| + |\beta(\tau)| + |\gamma(\tau)| + |\xi(\tau)|] d\tau) . \notag
$
It is clear that $h(0) = 0$ and the function $h(x)$ is a modulus of continuity defined on $\mathbb{R}^+$, which is independent of the initial data $u_0$.

For $n =2$, we deduce from \eqref{SCNS-t5} that
\begin{equation}\label{SCNS-t6}
\begin{aligned}
\sup_{t \in [0, \infty)} H^{(2)}(t) & \leq C e^{C \int_0^\infty [|\alpha(\tau)| + |\beta(\tau)|](2C h(H_0))^2 d\tau}  [H_0  + \int_0^\infty [|\gamma(\tau)| + |\xi(\tau)|](2C h(H_0))^3 d\tau] \\
& \leq C e^{4C^3 h^2(H_0) \int_0^\infty [|\alpha(\tau)| + |\beta(\tau)|] d\tau} (H_0 + 8C^3 h^3(H_0) \int_0^\infty [|\gamma(\tau)| + |\xi(\tau)|] d\tau)  \\
& \leq C h(H_0)e^{4C^3 h^2(H_0) \int_0^\infty [|\alpha(\tau)| + |\beta(\tau)|] d\tau} \\
& \quad \times (1 + 8C^3 h^2(H_0) \int_0^\infty [|\gamma(\tau)| + |\xi(\tau)|] d\tau)  \\
& \leq C h(H_0)e^{12C^3 h^2(H_0) \int_0^\infty [|\alpha(\tau)| + |\beta(\tau)| + |\gamma(\tau)| + |\xi(\tau)|] d\tau}.
\end{aligned}
\end{equation}
In the last inequality of \eqref{SCNS-t6},we have used the facts of $h(H_0) \geq H_0$ and $1+x \leq e^x$ for any $x \geq 0$. To estimate the right hand side of \eqref{SCNS-t6}, we assume that
\begin{equation*}
\begin{aligned}
  \int_0^\infty & [|\alpha(\tau)| + |\beta(\tau)| + |\gamma(\tau)| + |\xi(\tau)|] d\tau \\
& \leq \frac{\ln 2}{12C^3 e^{8C^3 {H_0}^2 \int_0^\infty [|\alpha(\tau)| + |\beta(\tau)|] d\tau}(H_0 + 8C^3 {H_0}^3 \int_0^\infty [|\alpha(\tau)| + |\beta(\tau)| + |\gamma(\tau)| + |\xi(\tau)|] d\tau)^2 } \\
& = \frac{\ln 2}{12C^3 h^2(H_0)},
\end{aligned}
\end{equation*}
which combined with \eqref{SCNS-t6} imply that
\begin{equation}\label{SCNS-t7}
\begin{aligned}
\sup_{t \in [0, \infty)} H^{(2)}(t) \leq C h(H_0)e^{12C^3 h^2(H_0) \int_0^\infty [|\alpha(\tau)| + |\beta(\tau)| + |\gamma(\tau)| + |\xi(\tau)|] d\tau} \leq 2C h(H_0).
\end{aligned}
\end{equation}

Assuming for any given $n \geq 3$ that
$
\sup_{t \in [0, \infty)} H^{(n)}(t) \leq 2C h(H_0).
$
For $H^{(n+1)}(t)$, it follows from \eqref{SCNS-t5} and \eqref{SCNS-t7} that
\begin{equation*}
\begin{aligned}
\sup_{t \in [0, \infty)} H^{(n+1)}(t) & \leq C e^{C \int_0^\infty [|\alpha(\tau)| + |\beta(\tau)|](H^{(n)}))^2 d\tau}  [H_0  + \int_0^\infty [|\gamma(\tau)| + |\xi(\tau)|](H^{(n)})^3 d\tau] \\
& \leq C e^{C \int_0^\infty [|\alpha(\tau)| + |\beta(\tau)|](2C h(H_0))^2 d\tau} [H_0  + \int_0^\infty [|\gamma(\tau)| + |\xi(\tau)|](2C h(H_0))^3 d\tau] \\
& \leq C h(H_0)e^{4C^3 h^2(H_0) \int_0^\infty [|\alpha(\tau)| + |\beta(\tau)|] d\tau} \\
& \quad \times (1 + 8C^3 h^2(H_0) \int_0^\infty [|\gamma(\tau)| + |\xi(\tau)|] d\tau)  \\
& \leq C h(H_0)e^{12C^3 h^2(H_0) \int_0^\infty [|\alpha(\tau)| + |\beta(\tau)| + |\gamma(\tau)| + |\xi(\tau)|] d\tau} \leq 2C h(H_0).
\end{aligned}
\end{equation*}
Using the mathematical induction with respect to n, we get
$
\sup_{n \geq 1}\sup_{t \in [0, \infty)} H^{(n+1)}(t) \leq 2C h(H_0),
$
which implies the uniform bound
\begin{equation}\label{uniform-bound}
\sup_{n \geq 1} \| u^{(n)} \|_{L^\infty([0, \infty); B^s_{p,r})} \leq 2C h(H_0).
\end{equation}
As a result, the approximate solutions $(u^{(n)})_{n \geq 1}$ is uniformly bounded in $C([0,\infty);B^{s}_{p,r})$. Moreover, using the system \eqref{SCNS-t2} itself, one can verify that the sequence $(\partial_t u^{(n)})_{n \geq 1}$ is uniformly bounded in $C([0,\infty);B^{s-1}_{p,r})$. Therefore, we obtain that $(u^{(n)})_{n \geq 1}$ is uniformly bounded in $E^{s}_{p,r}(\infty)$.

\subsection{Convergence}
We show that $(u^{(n)})_{n \geq 1}$ is a Cauchy sequence in the space $C([0, \infty);B^{s-1}_{p,r})$. Indeed, according to \eqref{SCNS-t2}, we obtain that for any $n,m \geq 1$
\begin{equation}\label{SCNS-t8}
\begin{aligned}
[\partial_t &+ (\alpha(t) + \beta(t) u^{(n+m)}) \nabla S^{(n+m)}\cdot \nabla]  (u^{(n+m+1)}-u^{(n+1)})
&= -[(\alpha(t) + \beta(t) u^{(n+m)}) \cdot \nabla S^{(n+m)} - (\alpha(t) + \beta(t) u^{(n)}) \cdot \nabla S^{(n)}] \\
& \quad \times \nabla u^{(n+1)} - f(u^{(n)},u^{(n+m)}),
\end{aligned}
\end{equation}
with the initial conditions
\begin{equation}
\begin{aligned}
  (u^{(n+m+1)} - u^{(n+1)})(0,x) = S_{n+m+1}u_0(x) - S_{n+1}u_0(x),\notag
\end{aligned}
\end{equation}
where
$f(u^{(n)},u^{(n+m)}) = (\gamma(t) u^{(n+m)} + \xi(t) (u^{(n+m)})^2) \Delta S^{(n+m)} - (\gamma(t) u^{(n)} + \xi(t) (u^{(n)})^2) \Delta S^{(n)}$.

Applying the Lemma \ref{prior estimates} to \eqref{SCNS-t8}, we deduce that
\begin{equation}\label{SCNS-t9}
\begin{aligned}
  e & ^{-C \int_0^t \| \nabla [(\alpha(\tau) + \beta(\tau) \cdot u^{(n+m)}) \nabla S^{(n+m)}] \|_{B^{\frac{d}{p}}_{p,r} \cap L^\infty} \, d\tau} \| (u^{(n+m+1)} - u^{(n+1)})(t) \|_{B^{s-1}_{p,r}} \\
& \leq \| S_{n+m+1}u_0 - S_{n+1}u_0 \|_{B^{s-1}_{p,r}} + \int_0^t e ^{-C \int_0^{\tau} \| \nabla [(\alpha(t') + \beta(t') \cdot u^{(n+m)}) \nabla S^{(n+m)}] \|_{B^{\frac{d}{p}}_{p,r} \cap L^\infty} \, dt'} \\
& \quad \times (\| [(\alpha(\tau) + \beta(\tau) u^{(n+m)}) \cdot \nabla S^{(n+m)} - (\alpha(\tau) + \beta(\tau) u^{(n)}) \cdot \nabla S^{(n)}] \cdot \nabla u^{(n+1)} \|_{B^{s-1}_{p,r}} \\
& \quad + \|f(u^{(n)},u^{(n+m)}) \|_{B^{s-1}_{p,r}}) d\tau. \\
\end{aligned}
\end{equation}
For $s > 1+ \frac{d}{p} $, the space $B^{s-1}_{p,r}$ is a Banach algebra and $B^{s-1}_{p,r}$ continuously embedded into $B^{\frac{d}{p}}_{p,r} \cap L^\infty$, we get from the uniform bound \eqref{uniform-bound} that
\begin{equation}
\begin{aligned}
  \int_0^t & \| \nabla [(\alpha(\tau) + \beta(\tau) \cdot u^{(n+m)}) \nabla S^{(n+m)}] \|_{B^{\frac{d}{p}}_{p,r} \cap L^\infty} \, d\tau \\
& \leq  C \int_0^t \| \nabla [(\alpha(\tau) + \beta(\tau) \cdot u^{(n+m)}) \nabla S^{(n+m)}] \|_{B^{s-1}_{p,r}} \, d\tau \\
& \leq C \int_0^t \| (\alpha(\tau) + \beta(\tau) \cdot u^{(n+m)}) u^{(n+m)} \|_{B^{s-1}_{p,r}} \, d\tau \\
& \leq C \int_0^t  (|\alpha(\tau)| + |\beta(\tau)|)  (\|u^{(n+m)} \|_{B^{s}_{p,r}} +\|u^{(n+m)} \|^2_{B^{s}_{p,r}})\, d\tau  ,
\end{aligned}
\end{equation}

\begin{equation}
\begin{aligned}
  \| [ & (\alpha(\tau) + \beta(\tau) u^{(n+m)}) \cdot \nabla S^{(n+m)} - (\alpha(\tau) + \beta(\tau) u^{(n)}) \cdot \nabla S^{(n)}] \cdot \nabla u^{(n+1)} \|_{B^{s-1}_{p,r}} \\
& \leq  C \| \alpha(\tau)(\nabla S^{(n+m)} - \nabla S^{(n)})  + \beta(\tau) u^{(n+m)} \nabla S^{(n+m)} \\
& \quad - \beta(\tau) u^{(n)} \nabla S^{(n+m)} + \beta(\tau) u^{(n)} \nabla S^{(n+m)} - \beta(\tau) u^{(n)} \nabla S^{(n)} \|_{B^{s-1}_{p,r}} \cdot \|u^{(n+1)}\|_{B^{s}_{p,r}} \\
& \quad + \beta(\tau) u^{(n)} (\nabla S^{(n+m)} - \nabla S^{(n)}) \|_{B^{s-1}_{p,r}} \cdot \|u^{(n+1)}\|_{B^{s}_{p,r}} \\
& = C \| (\alpha(\tau) + \beta(\tau)u^{(n)})(\nabla S^{(n+m)} - \nabla S^{(n)})  \\
& \quad + \beta(\tau) (u^{(n+m)} - u^{(n)}) \nabla S^{(n+m)} \|_{B^{s-1}_{p,r}} \cdot \|u^{(n+1)}\|_{B^{s}_{p,r}} \\
& \leq  C \| (u^{(n+m)} - u^{(n)})(\alpha(\tau) + \beta(\tau)u^{(n)} + \beta(\tau)u^{(n+m)}) \|_{B^{s-2}_{p,r}} \cdot \|u^{(n+1)}\|_{B^{s}_{p,r}} \\
& \leq  C (|\alpha(\tau)| + |\beta(\tau)|)(1 + \| u^{(n)} \|_{B^{s}_{p,r}} + + \| u^{(n+m)} \|_{B^{s}_{p,r}}) \\
& \quad \times \|u^{(n+m)} - u^{(n)}\|_{B^{s-1}_{p,r}} \cdot \|u^{(n+1)}\|_{B^{s}_{p,r}},
\end{aligned}
\end{equation}
and
\begin{equation}
\begin{aligned}
  \|f(u^{(n)},u^{(n+m)}) \|_{B^{s-1}_{p,r}}  & = \|(\gamma(t) u^{(n+m)} + \xi(t) (u^{(n+m)})^2) \Delta S^{(n+m)}  - (\gamma(t) u^{(n)} + \xi(t) (u^{(n)})^2) \Delta S^{(n)}\|_{B^{s-1}_{p,r}} \\
& \leq C \|(\gamma(t) (u^{(n+m)})^2 + \xi(t) (u^{(n+m)})^3) - (\gamma(t) (u^{(n)})^2 - \xi(t) (u^{(n)})^3)\|_{B^{s-1}_{p,r}}\\
 &\leq C (|\gamma(t)| + |\xi(t)|)\|u^{(n+m)} - u^{(n)}\|_{B^{s-1}_{p,r}} (1 + \|u^{(n+m)}\|_{B^{s}_{p,r}} + \|u^{(n)}\|_{B^{s}_{p,r}})^2.
\end{aligned}
\end{equation}

According to the definition of the Littlewood-Paley blocks $\Delta_p$ and the almost orthogonal property $\Delta_i \Delta_j =0$ for $|i-j| \geq 2$, we have
\begin{equation*}
\begin{aligned}
  \| S_{n+m+1}u_0 - S_{n+1}u_0 \|^r_{B^{s-1}_{p,r}} 
 \leq C \sum_{j=n}^{n+m+1} (2^{-jr} 2^{jrs} \sum_{i=n+1}^{n+m}\| \Delta_j  \Delta_i u_0 \|^r_{L^p}) \\
 \leq C 2^{-rn} \sum_{j=n}^{n+m+1} 2^{jrs} \| \Delta_j u_0 \|^r_{L^p}  \leq C 2^{-rn} \| u_0 \|^r_{B^{s}_{p,r}}.
\end{aligned}
\end{equation*}
Hence, we obtain
\begin{equation}\label{SCNS-t10}
\begin{aligned}
  \| S_{n+m+1}u_0 - S_{n+1}u_0 \|_{B^{s-1}_{p,r}} \leq C 2^{-n}.
\end{aligned}
\end{equation}

Plugging the estimates \eqref{SCNS-t9}-\eqref{SCNS-t10} together, we obtain
\begin{equation}
\begin{aligned}
  \| (u^{(n+m+1)} - u^{(n+1)})(t) \|_{B^{s-1}_{p,r}} & \leq C(2^{-n} + \int_{0}^{t}(|\alpha(\tau)| + |\beta(\tau)| + |\gamma(\tau)| + |\xi(\tau)|) \\
& \quad \times \| (u^{(n+m)} - u^{(n)})(t) \|_{B^{s-1}_{p,r}} d\tau).
\end{aligned}
\end{equation}
To derive an estimate for $\| (u^{(n+m)} - u^{(n)})(t) \|_{B^{s-1}_{p,r}}$, we first prove the following useful lemma.

\begin{lemma}\label{lem:RL}
Let ${a_n}$ be a positive constant, and the nonnegetive function $\mu(t) \in L^1(0,\infty).$  Assume that the sequence of nonnegative functions $ (g_n(t))_{n \geq 1}$ satisfies the inequality
$$
g_{n+1}(t) \leq a_n + \int_{0}^{t} \mu(t') g_n(t')dt',
$$
where $g_0$ is a nonnegative number. Then we have
$$
\sup\limits_{t \in [0,\infty)} g_{n+1}(t) \leq \sum_{k=0}^{n} \frac{a_{n-k} \| \mu\|^k_{L^{1}(0,\infty)}}{k!} + \frac{g_0 \| \mu\|^{n+1}_{L^{1}(0,\infty)}}{(n+1)!}.
$$
\end{lemma}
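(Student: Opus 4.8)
The plan is to unwind the recursion $g_{n+1}(t) \le a_n + \int_0^t \mu(t')g_n(t')\,dt'$ all the way down to the constant $g_0$, exploiting the sign condition $\mu \ge 0$ to keep each substitution monotone, and to observe that the resulting nested time-integrals of $\mu$ collapse into powers of $F(t) \stackrel{\text{def}}{=} \int_0^t \mu(s)\,ds$ divided by factorials. The factorial gain is the whole point: a naive bound $\sup g_{n+1}\le a_n+\|\mu\|_{L^1}\sup g_n$ would lose it.

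First I would record the elementary iterated-integral identity. Writing $I_k(t)$ for the $k$-fold nested integral $\int_0^t \mu(t_1)\int_0^{t_1}\mu(t_2)\cdots\int_0^{t_{k-1}}\mu(t_k)\,dt_k\cdots dt_1$, one has $I_0 \equiv 1$ and $I_k(t) = F(t)^k/k!$. This follows by induction on $k$: since $F'=\mu$,
$$ I_k(t)=\int_0^t \mu(t_1)\, I_{k-1}(t_1)\,dt_1 = \frac{1}{(k-1)!}\int_0^t F'(t_1)\,F(t_1)^{k-1}\,dt_1 = \frac{F(t)^k}{k!}. $$
Because $\mu \ge 0$, the function $F$ is nondecreasing and bounded above by $\|\mu\|_{L^1(0,\infty)}$, so $I_k(t)\le \|\mu\|_{L^1}^k/k!$ for every $t\ge 0$.

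Next I would iterate the hypothesis. The key intermediate claim, proved by induction on $m$ for $0\le m\le n$, is that
$$ g_{n+1}(t) \le \sum_{j=0}^{m} a_{n-j}\,\frac{F(t)^j}{j!} + \int_0^t \mu(t_1)\cdots\int_0^{t_m}\mu(t_{m+1})\,g_{n-m}(t_{m+1})\,dt_{m+1}\cdots dt_1. $$
The base case $m=0$ is the hypothesis itself. For the inductive step I substitute $g_{n-m}(s)\le a_{n-m-1}+\int_0^s \mu\, g_{n-m-1}$ into the remainder term; since $\mu\ge 0$, inserting this pointwise upper bound preserves the inequality through all the outer integrations, the constant $a_{n-m-1}$ factors out to produce $a_{n-m-1}I_{m+1}(t)=a_{n-m-1}F(t)^{m+1}/(m+1)!$ (the new $j=m+1$ summand), and the rest becomes the $(m+1)$-fold remainder. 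Taking $m=n$ turns the remainder into $g_0\,I_{n+1}(t)=g_0 F(t)^{n+1}/(n+1)!$, whence $g_{n+1}(t)\le \sum_{j=0}^n a_{n-j}F(t)^j/j! + g_0 F(t)^{n+1}/(n+1)!$. Bounding $F(t)\le\|\mu\|_{L^1}$ and taking the supremum over $t\in[0,\infty)$ yields the stated estimate.

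The step I expect to be the main obstacle is the bookkeeping in this iteration: tracking which coefficient $a_{n-j}$ pairs with which iterated integral, and verifying that the monotone substitution genuinely preserves the inequality after all outer integrations. The crucial, easily overlooked point is that nonnegativity of $\mu$ is exactly what licenses replacing $g_{n-m}$ by an upper bound inside $\int \mu(\cdot)[\cdots]$; without this sign hypothesis the argument collapses to the crude bound above, which lacks the $1/k!$ decay needed to sum the series and close the Cauchy estimate in the convergence step.
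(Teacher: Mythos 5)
Your proof is correct and follows essentially the same route as the paper's: unwind the recursion down to $g_0$ and exploit the factorial decay of iterated integrals of $\mu$. The only difference is bookkeeping — the paper collapses the remainder at each step via Fubini into a single integral weighted by $\bigl(\int_{t_1}^{t}\mu(t_2)\,dt_2\bigr)^k$ and integrates it with the fundamental theorem of calculus, while you keep the fully nested remainder and invoke the simplex identity $I_k(t)=F(t)^k/k!$ up front; these are the same computation.
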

\begin{proof}[\textbf{\emph{Proof.}}]
Using the iterative inequality, we first have
\begin{equation}\label{p1}
\begin{aligned}
g_{n+1}(t) & \leq a_n + \int_{0}^{t} \mu(t_2) \left(a_{n-1} + \int_{0}^{t_2} \mu(t_1) g_{n-1}(t_1) dt_1 \right)d t_2 \\
& \leq a_n + a_{n-1} \int_{0}^{t} \mu(t_2)dt_2 + \int_{0}^{t} \int_{0}^{t_2} \mu(t_1) g_{n-1}(t_1) \mu(t_2)dt_1 dt_2 \\
& = a_n + a_{n-1} \int_{0}^{t} \mu(t_2) dt_2 + \int_{0}^{t} \mu(t_1) \left(\int_{t_1}^{t} \mu(t_2) dt_2 \right) g_{n-1}(t_1) dt_1.
\end{aligned}
\end{equation}
Inserting the inequality $g_{n-1}(t_1) \leq a_{n-2} + \int_{0}^{t_1} \mu(t_3) g_{n-2}(t_3)dt_3$ into \eqref{p1}, we have
\begin{equation}\label{p2}
\begin{aligned}
g_{n+1}(t) & \leq a_n + a_{n-1} \int_{0}^{t} \mu(t_2) dt_2 + a_{n-2} \int_{0}^{t} \mu(t_1) \left(\int_{t_1}^{t} \mu(t_2) dt_2\right) dt_1 \\
& \quad + \int_{0}^{t} \mu(t_1) \left(\int_{t_1}^{t} \mu(t_2) dt_2\right) \left(\int_{0}^{t_1} \mu(t_3) g_{n-2}(t_3) dt_3\right) dt_1.
\end{aligned}
\end{equation}
For the last two terms on the right hand side of \eqref{p2}, we have
$$
\int_{0}^{t} \mu(t_1) \left(\int_{t_1}^{t} \mu(t_2) dt_2\right) dt_1 = \frac{1}{2} \left(\int_{0}^{t} \mu(t_2) dt_2\right)^2,
$$
and
\begin{equation*}
\begin{aligned}
\int_{0}^{t} & \mu(t_1) \left(\int_{t_1}^{t} \mu(t_2) dt_2\right) \left(\int_{0}^{t_1} \mu(t_3) g_{n-2}(t_3) dt_3 \right) dt_1 \\
& = - \frac{1}{2} \int_{0}^{t} \left(\int_{0}^{t_1} \mu(t_3)g_{n-2}(t_3) dt_3\right) dt_1 \left[ \left( \int_{t_1}^{t} \mu(t_2) dt_2 \right)^2 \right] \\
& = \frac{1}{2} \int_{0}^{t} \mu(t_1) \left(\int_{t_1}^{t} \mu(t_2) dt_2\right)^2 g_{n-2}(t_1)dt_1.
\end{aligned}
\end{equation*}
Hence we get
\begin{equation}\label{q3}
\begin{aligned}
g_{n+1}(t) & \leq a_n + a_{n-1} \int_{0}^{t} \mu(t_2) dt_2 + \frac{a_{n-2}}{2} \left(\int_{0}^{t} \mu(t_2) dt_2\right)^2 \\
& + \frac{1}{2} \int_{0}^{t} \mu(t_1) \left(\int_{t_1}^{t} \mu(t_2) dt_2\right)^2 g_{n-2}(t_1) dt_1.
\end{aligned}
\end{equation}
For the integrand $g_{n-2}(t_1) $ in \eqref{q3}, we use the iterative inequality again and obtain
\begin{equation*}
\begin{aligned}
g_{n+1}(t) & \leq a_n + a_{n-1} \int_{0}^{t} \mu(t_2) dt_2 + \frac{a_{n-2}}{2} \left(\int_{0}^{t} \mu(t_2) dt_2\right)^2 \\
& \quad + \frac{a_{n-3}}{2} \int_{0}^{t} \mu(t_1) \left(\int_{t_1}^{t} \mu(t_2) dt_2\right)^2 dt_1 \\
& \quad + \frac{1}{2} \int_{0}^{t} \mu(t_1) \left(\int_{t_1}^{t} \mu(t_2) dt_2\right)^2 \left(\int_{0}^{t_1} \mu(t_4) g_{n-3}(t_4) dt_4\right) dt_1 \\
& \leq \sum_{k=0}^{3} \frac{a_{n-k}}{k!} \left(\int_{0}^{t} \mu(t_2) dt_2\right)^k + \frac{1}{3!} \int_{0}^{t} \mu(t_1) \left(\int_{t_1}^{t} \mu(t_2) dt_2\right)^3 g_{n-3}(t_1)dt_1.
\end{aligned}
\end{equation*}
Following the similar procedure for several times, we finally obtain
\begin{equation*}
\begin{aligned}
g_{n+1}(t) & \leq \sum_{k=0}^{n} \frac{a_{n-k}}{k!} \left(\int_{0}^{t} \mu(t_2) dt_2\right)^k + \frac{g_0}{n!} \int_{0}^{t} \mu(t_1) \left( \int_{t_1}^{t} \mu(t_2) dt_2\right)^n dt_1 \\
& = \sum_{k=0}^{n} \frac{a_{n-k}}{k!} \left(\int_{0}^{t} \mu(t_2) dt_2\right)^k + \frac{g_0}{{n+1}!} \left(\int_{0}^{t} \mu(t_2) dt_2\right)^{n+1} \\
& \leq \sum_{k=0}^{n} \frac{a_{n-k} \| \mu \|^{k}_{L^1(0,\infty)}}{k!} + \frac{g_0 \| \mu \|^{n+1}_{L^1(0,\infty)}}{(n+1)!}.
\end{aligned}
\end{equation*}
The proof of Lemma \ref{lem:RL} is completed.
\end{proof}

By taking $a_n = C 2^{-n}$ and $ \mu(t)=C(|\alpha(t)| + |\beta(t)| + |\gamma(t)| + |\xi(t)|)$, we get from Lemma \ref{lem:RL} that
\begin{equation}
\begin{aligned}
\sup\limits_{t \in [0,\infty)} \left\| (u^{(n+m+1)} - u^{(n+1)})(t) \right\|_{B^{s-1}_{p,r}}
\leq \sum_{k=0}^{n} \frac{C 2^{-n}}{k!} \left(\frac{\ln 2}{6C^3 h^2(H_0)}\right)^k
+ \frac{g_0}{(n+1)!} \left(\frac{\ln 2}{12C^3 h^2(H_0)}\right)^{n+1}, \notag
\end{aligned}
\end{equation}
which implies that
\begin{equation}
\begin{aligned}
\lim\limits_{n \to \infty} \sup\limits_{t \in [0,\infty)} \left\| (u^{(n+m+1)} - u^{(n+1)})(t) \right\|_{B^{s-1}_{p,r}} = 0. \notag
\end{aligned}
\end{equation}
Therefore, the sequence $(u^{(n)})_{n \geq 1}$ converges strongly in the Banach space $C([0,\infty);B^{s-1}_{p,r})$, and we denote the limit function by $u$.

\subsection{Existence}
In this step we shall verify the limit function $u$ indeed belongs to $E^s_{p,r}(\infty)$ and is a strong solution to the system \eqref{SCNS-t1}. Since the sequence $(u^{(n)})_{n \geq 1}$ is uniformly bounded in $ L^{\infty}([0,\infty);B^{s}_{p,r})$, and $u^{(n)} \to u$ strongly in $B^{s-1}_{p,r} \hookrightarrow \mathscr{S}' $ as $n \to \infty$, it follows from the Fatou's lemma (cf. Lemma \ref{lem:FT}) that $u \in L^{\infty}([0,\infty);B^{s}_{p,r})$.

On the other hand, as $(u^{(n)})_{n \geq 1}$ converges strongly to $u$ in $C([0,\infty);B^{s-1}_{p,r})$, an interpolation argument insures that the convergence holds in $C([0,\infty);B^{s'}_{p,r})$, for any $s' < s$. It is then easy to pass to the limit in the system \eqref{SCNS-t2} and to conclude that $u$ is indeed a solution to \eqref{SCNS-t1}. Thanks to the fact that $u$ belongs to $C([0,\infty);B^{s}_{p,r})$, we know that the right-hand side of the equation
\begin{equation}
\begin{aligned}
 \partial_t u + (\alpha(t) + \beta(t) u) \nabla S \cdot \nabla u = -(\gamma(t) u + \xi(t) u^2) \Delta S, \notag
\end{aligned}
\end{equation}
belongs to $L^1([0,\infty);B^{s}_{p,r})$. In particular, for the case $r < \infty$, applying Lemma \ref{prior estimates} implies that $u \in C([0,\infty);B^{s'}_{p,r})$ , for any $s' < s$. Finally, using the system \eqref{SCNS-t1} again, we see that $ \partial_t u \in C([0,\infty);B^{s'-1}_{p,r})$ for $r < \infty$, and in $ L^{\infty}([0,\infty);B^{s-1}_{p,r}) $ otherwise. Therefore, $u$ belongs to $E^s_{p,r}(\infty)$. Moreover, a standard use of a sequence of viscosity approximate solutions $(u_{\varepsilon})_{\varepsilon > 0}$ for system \eqref{SCNS-t1} which converges uniformly in
\begin{equation}
\begin{aligned}
  C([0,\infty);B^{s}_{p,r}) \cap C^1 ([0,\infty);B^{s-1}_{p,r}), \notag
\end{aligned}
\end{equation}
gives the continuity of solution $u$ in $E^s_{p,r}(\infty)$.

\subsection{Uniqueness and stability}
Uniqueness and continuity with respect to the initial data are immediate consequence of the following result.
\begin{proposition}
Let $1 \leq p,r \leq +\infty$ and $ s > 1+\frac{d}{p}$. Suppose that there exists two solutions $u_1(t),u_2(t) \in \{L^{\infty}([0,\infty);B^{s}_{p,r}) \cap C([0,\infty);\mathscr{S}')\}^2$ of the initial-value problem \eqref{SCNS-t1} with the initial data $u_1(0),u_2(0) \in B^{s}_{p,r}$. Then for all $ t \in [0,\infty)$, we have
\begin{enumerate}[label=(\roman*)]
\item If $s > 1+ \frac{d}{p}$, and $s \ne 2+ \frac{d}{p}$, we have
\begin{equation}\label{SCNS-t11}
\begin{aligned}
  \| u_1(t) & - u_2(t) \|_{B^{s-1}_{p,r}} \leq \| u_1(0) -u_2(0) \|_{B^{s-1}_{p,r}} \\
& \times e^{C \int_{0}^{\infty} (|\alpha(\tau)| + |\beta(\tau)| + |\gamma(\tau)| + |\xi(\tau)|) \cdot (1 + \| u_1 \|_{B^{s}_{p,r}} + \| u_2 \|_{B^{s}_{p,r}})^2 d\tau}.
\end{aligned}
\end{equation}
\item If $s = 2+ \frac{d}{p}$, we have
\begin{equation}\label{SCNS-t12}
\begin{aligned}
  \| u_1(t) & - u_2(t) \|_{B^{s-1}_{p,r}} \leq \| u_1(0) -u_2(0) \|^{\theta}_{B^{s-1}_{p,r}} (\| u_1\|_{B^{s}_{p,r}} + \| u_2 \|_{B^{s}_{p,r}})^{1 - \theta}\\
& \times e^{C \theta \int_{0}^{\infty} (|\alpha(\tau)| + |\beta(\tau)| + |\gamma(\tau)| + |\xi(\tau)|) \cdot (1 + \| u_1 \|_{B^{s}_{p,r}} + \| u_2 \|_{B^{s}_{p,r}})^2 d\tau},
\end{aligned}
\end{equation}
where $ \theta \in (0,1)$.
\end{enumerate}
\end{proposition}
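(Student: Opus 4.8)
The plan is to reduce the statement to a single linear transport equation for the difference $w \stackrel{\text{def}}{=} u_1 - u_2$ and then to invoke the transport a priori estimate of Lemma \ref{prior estimates}, handling the borderline regularity $s = 2 + \frac{d}{p}$ separately by an interpolation device. Writing $v_i \stackrel{\text{def}}{=} (\alpha(t) + \beta(t) u_i)\nabla S_i$ with $S_i = (I-\Delta)^{-1}u_i$, I would first subtract the two copies of \eqref{SCNS-t1} and rearrange via the identity $v_1\cdot\nabla u_1 - v_2\cdot\nabla u_2 = v_2\cdot\nabla w + (v_1-v_2)\cdot\nabla u_1$, so that $w$ solves
$$
\partial_t w + v_2\cdot\nabla w = -\,(v_1-v_2)\cdot\nabla u_1 - \big[(\gamma u_1+\xi u_1^2)\Delta S_1 - (\gamma u_2+\xi u_2^2)\Delta S_2\big] \stackrel{\text{def}}{=} g,
$$
with initial datum $w(0) = u_1(0) - u_2(0)$. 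The choice of $v_2$ as transport velocity is immaterial; the symmetric combination would serve equally well.

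Second, I would estimate the source $g$ in $B^{s-1}_{p,r}$. The key structural fact is that $v_1 - v_2 = \alpha\,\nabla(I-\Delta)^{-1}w + \beta(u_1\nabla S_1 - u_2\nabla S_2)$ and $\Delta S_1 - \Delta S_2 = \Delta(I-\Delta)^{-1}w$ both carry an explicit factor of $w$, while $(I-\Delta)^{-1}$ together with $\nabla(I-\Delta)^{-1}$ and $\Delta(I-\Delta)^{-1}$ act as smoothing $S^{-2}$-, $S^{-1}$- and $S^{0}$-multipliers. Combining this with the Banach algebra property of $B^{s-1}_{p,r}$ (valid since $s-1 > \frac{d}{p}$) and with the a priori bounds $u_1,u_2 \in L^\infty([0,\infty);B^s_{p,r})$, each product splits exactly as in the Convergence subsection into one factor $\|w\|_{B^{s-1}_{p,r}}$ times polynomial factors in $\|u_1\|_{B^s_{p,r}}, \|u_2\|_{B^s_{p,r}}$ and the scalar weight $|\alpha|+|\beta|+|\gamma|+|\xi|$. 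The outcome I expect is
$$
\|g(\tau)\|_{B^{s-1}_{p,r}} \leq C\big(|\alpha|+|\beta|+|\gamma|+|\xi|\big)\big(1+\|u_1\|_{B^s_{p,r}}+\|u_2\|_{B^s_{p,r}}\big)^2 \|w(\tau)\|_{B^{s-1}_{p,r}}.
$$

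For case (i), I would feed this bound into Lemma \ref{prior estimates}. Since $s \neq 2+\frac{d}{p}$, the index $s-1$ avoids the critical value $1+\frac{d}{p}$, so the transport estimate applies directly in $B^{s-1}_{p,r}$: in the regime $s-1<1+\frac{d}{p}$ the velocity is controlled through $\|\nabla v_2\|_{B^{d/p}_{p,r}\cap L^\infty}$, while for $s-1>1+\frac{d}{p}$ one uses $\|\nabla v_2\|_{B^{s-2}_{p,r}}$, and in both cases this is dominated by $(|\alpha|+|\beta|)(1+\|u_2\|_{B^s_{p,r}})$ after the same multiplier bookkeeping. A Gronwall argument on the resulting integral inequality then yields \eqref{SCNS-t11}. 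Uniqueness is the special case $u_1(0)=u_2(0)$, which forces $w\equiv 0$, and continuity with respect to the initial data is read off directly from the exponential estimate.

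The genuinely delicate point is case (ii), $s = 2+\frac{d}{p}$, where $s-1 = 1+\frac{d}{p}$ is exactly the endpoint at which the transport estimate in $B^{s-1}_{p,r}$ fails for $r\neq 1$. The hard part, and the reason a derivative must be sacrificed, is that $g$ is controllable only one derivative below the solution, so one cannot close at the critical index. To circumvent this I would fix $\delta\in(0,1)$ and run the transport estimate at the subcritical level $B^{s-1-\delta}_{p,r}$, which is admissible since $s-1-\delta < 1+\frac{d}{p}$; using the embedding $\|w(0)\|_{B^{s-1-\delta}_{p,r}}\leq\|w(0)\|_{B^{s-1}_{p,r}}$ this controls $\|w(t)\|_{B^{s-1-\delta}_{p,r}}$ by $\|w(0)\|_{B^{s-1}_{p,r}}$ times the transported exponential factor. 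Then, interpolating $B^{s-1}_{p,r}$ between $B^{s-1-\delta}_{p,r}$ and $B^{s}_{p,r}$ with weight $\theta=\frac{1}{1+\delta}$ and absorbing $\|w\|_{B^s_{p,r}}\leq\|u_1\|_{B^s_{p,r}}+\|u_2\|_{B^s_{p,r}}$ through the uniform bound produces precisely \eqref{SCNS-t12}, the power $\theta$ in the exponent arising because only the $B^{s-1-\delta}_{p,r}$-factor carries the exponential growth.
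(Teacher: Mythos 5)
Your proposal is correct, and for part (i) it follows essentially the same route as the paper: write the transport equation for the difference $w=u_1-u_2$ (the paper transports along $v_1$ and puts $(v_2-v_1)\cdot\nabla u_2$ in the source; you transport along $v_2$ with $(v_1-v_2)\cdot\nabla u_1$, which is equivalent), estimate the source by the algebra property of $B^{s-1}_{p,r}$ and the $S^{-m}$-multiplier properties of $(I-\Delta)^{-1}$, then apply Lemma \ref{prior estimates} and Gronwall. One small imprecision: the velocity bound is quadratic, $\|\nabla v_2\|\lesssim(|\alpha|+|\beta|)\left(1+\|u_2\|_{B^s_{p,r}}\right)^2$, not linear as you wrote, since $v_2$ contains $\beta u_2\nabla S_2$; this is harmless because the final exponent already carries $(1+\|u_1\|_{B^s_{p,r}}+\|u_2\|_{B^s_{p,r}})^2$.

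For part (ii) your route genuinely differs from the paper's, and is arguably more robust. The paper handles $s=2+\frac{d}{p}$ by applying the case-(i) stability estimate at the shifted regularity $1+\frac{d}{2p}$ and interpolating $B^{s-1}_{p,r}$ between $B^{1+\frac{d}{2p}}_{p,r}$ and $B^{2+\frac{d}{2p}}_{p,r}$, which forces $\theta=1-\frac{d}{2p}$ and therefore silently requires $d<2p$ for $\theta\in(0,1)$ (and for $1+\frac{d}{2p}>\frac{d}{p}$, needed for the algebra property at the lower level). You instead run the transport estimate at the subcritical level $B^{s-1-\delta}_{p,r}$ for an arbitrary $\delta\in(0,1)$ (so the algebra property $s-1-\delta>\frac{d}{p}$ always holds) and then interpolate $B^{s-1}_{p,r}$ between $B^{s-1-\delta}_{p,r}$ and $B^{s}_{p,r}$ with $\theta=\frac{1}{1+\delta}$, absorbing $\|w\|_{B^s_{p,r}}\le\|u_1\|_{B^s_{p,r}}+\|u_2\|_{B^s_{p,r}}$. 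This yields \eqref{SCNS-t12} for any $\theta\in(\tfrac12,1)$ without any restriction on $(d,p)$, whereas the paper's choice of endpoints buys a specific $\theta$ tied to $d/p$ at the cost of that hidden constraint. Both arguments share the same underlying idea (sacrifice a derivative, close at a non-critical index, recover by interpolation), and both produce a harmless multiplicative constant in front of \eqref{SCNS-t12} that the stated inequality omits.
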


\begin{proof}[\textbf{\emph{Proof.}}]
Let $\omega = u_1 - u_2$, we can know that $ \omega \in L^{\infty}([0,\infty);B^{s}_{p,r}) \cap C([0,\infty);\mathscr{S}') $, which implies that $ \omega \in C([0,\infty);B^{s-1}_{p,r}) $, and $\omega$ is the solution of the following transport equation
\begin{equation}
\left\{
\begin{aligned}
& \partial_t \omega + v_1 \nabla \omega = (v_2 - v_1) \nabla u_2 + f(u_1,u_2), \\
& S_1 = (I - \Delta)^{-1} u_1,S_2 = (I - \Delta)^{-1} u_2, \\
& \omega(0,x) = u_1(0,x) - u_2(0,x),
\end{aligned}
\right.
\end{equation}
with
\begin{equation}
\begin{aligned}
  f(u_1,u_2)= (\gamma(t) u_2 + \xi(t) (u_2)^2) \Delta S_2-(\gamma(t) u_1 + \xi(t) (u_1)^2) \Delta S_1,\notag
\end{aligned}
\end{equation}
and
\begin{equation}
\begin{aligned}
 v_i=(\alpha(t) + \beta(t) u_i) \nabla S_i , \quad i=1,2. \notag
\end{aligned}
\end{equation}
Now, we prove that the case (i) holds. Lemma \ref{prior estimates} implies that
\begin{equation}\label{SCNS-a1}
\begin{aligned}
  \| \omega(t) \|_{B^{s-1}_{p,r}}  & \leq \| \omega(0) \|_{B^{s-1}_{p,r}} + C \int_{0}^{t} V'(\tau) \| \omega \|_{B^{s-1}_{p,r}} d\tau \\
& + \int_{0}^{t}(\| (v_2 - v_1)\nabla u_2 \|_{B^{s-1}_{p,r}} + \| f(u_1,u_2) \|_{B^{s-1}_{p,r}})d\tau,
\end{aligned}
\end{equation}
where $ V(t) = \int_{0}^{t} \| \nabla v_1(\tau) \|_{B^{\frac{d}{p}}_{p,r} \cap L^{\infty}} d\tau$ if $s < 2+\frac{d}{p}$, and $ V(t) = \int_{0}^{t} \| \nabla v_1(\tau) \|_{B^{s-2}_{p,r}} d\tau$ else.

For the case $1+\frac{d}{p} < s < 2+\frac{d}{p}$, using the embedding $B^{s-1}_{p,r} \hookrightarrow L^{\infty} (s-1>\frac{d}{p})$ directly, we get
\begin{equation}\label{SCNS-a2}
\begin{aligned}
  \| \nabla v_1(\tau) \|_{B^{\frac{d}{p}}_{p,r} \cap L^{\infty}} & \leq C \| \nabla v_1(\tau) \|_{B^{s-1}_{p,r}} = C \| \nabla [(\alpha(\tau) + \beta(\tau) u_1) \nabla S_1] \|_{B^{s-1}_{p,r}} \\
& \leq C (|\alpha(\tau)| \cdot \| \nabla S_1 \|_{B^{s}_{p,r}} + |\beta(\tau)| \cdot \| u_1 \nabla S_1 \|_{B^{s}_{p,r}}) \\
& \leq C (|\alpha(\tau)| \cdot \|\nabla (I - \Delta)^{-1} u_1 \|_{B^{s}_{p,r}} \\
& \quad + |\beta(\tau)| \cdot \| u_1 \|_{B^{s}_{p,r}} \|\nabla (I - \Delta)^{-1} u_1 \|_{B^{s}_{p,r}}) \\
& \leq C (|\alpha(\tau)| \cdot \|u_1 \|_{B^{s}_{p,r}} + |\beta(\tau)| \cdot \| u_1 \|^2_{B^{s}_{p,r}}).
\end{aligned}
\end{equation}
by noticing the fact that $B^{s}_{p,r}$ was an algebra provided $s > 1+\frac{d}{p}$.

Applying this algebraic property for $B^{s-1}_{p,r}$ with $s>1+\frac{d}{p}$ again, we have
\begin{equation}
\begin{aligned}
  \| (v_2 - v_1)\nabla u_2 \|_{B^{s-1}_{p,r}} & = \| v_2 - v_1 \|_{B^{s-1}_{p,r}} \cdot \|\nabla u_2 \|_{B^{s-1}_{p,r}} \\
& \leq C ( |\alpha(\tau)| \cdot \| \nabla S_1 - \nabla S_2\|_{B^{s-1}_{p,r}} + |\beta(\tau)| \\
& \quad \times \| u_1 \nabla S_1 - u_2 \nabla S_2\|_{B^{s-1}_{p,r}}) \cdot \| u_2 \|_{B^{s}_{p,r}} \\
& \leq C \bigg( |\alpha(\tau)| \cdot \| \omega\|_{B^{s-1}_{p,r}} + |\beta(\tau)| \cdot (\| \omega\|_{B^{s-1}_{p,r}} \| \nabla (I - \Delta)^{-1} u_1\|_{B^{s-1}_{p,r}} \\
& \quad + \| \omega\|_{B^{s-1}_{p,r}} \cdot \| u_2\|_{B^{s-1}_{p,r}}) \bigg) \cdot \| u_2 \|_{B^{s}_{p,r}} \\
& \leq C \bigg(|\alpha(\tau)| \cdot \| u_2\|_{B^{s}_{p,r}} + |\beta(\tau)| \cdot (\| u_1\|_{B^{s}_{p,r}} \| u_2\|_{B^{s}_{p,r}} +\| u_2\|^2_{B^{s}_{p,r}}) \bigg) \cdot \| \omega \|_{B^{s-1}_{p,r}}.
\end{aligned}
\end{equation}
and
\begin{equation}\label{SCNS-a3}
\begin{aligned}
  \| f(u_1,u_2) \|_{B^{s-1}_{p,r}} & = \| (\gamma(\tau) u_2 + \xi(\tau) (u_2)^2) \Delta S_2-(\gamma(\tau) u_1 + \xi(\tau) (u_1)^2) \Delta S_1 \|_{B^{s-1}_{p,r}} \\
& \leq C ( |\gamma(\tau)| \cdot \| \omega \Delta S_2 + u_1 (\Delta S_2 - \Delta S_1)\|_{B^{s-1}_{p,r}} \\
& \quad + |\xi(\tau)| \cdot \| ((u_2)^2 - (u_1)^2) \Delta S_2 + (u_1)^2 (\Delta S_2 -\Delta S_1)\|_{B^{s-1}_{p,r}} )\\
&  \leq C ( |\gamma(\tau)| \cdot \| \omega \|_{B^{s-1}_{p,r}} (\| u_1 \|_{B^{s}_{p,r}} + \| u_2 \|_{B^{s}_{p,r}}) + |\xi(\tau)| \cdot \| \omega \|_{B^{s-1}_{p,r}} (\| u_1 \|_{B^{s}_{p,r}} + \| u_2 \|_{B^{s}_{p,r}})^2) \\
&  \leq C (|\gamma(\tau)| + |\xi(\tau)|) \cdot \| \omega \|_{B^{s-1}_{p,r}} \cdot (1 + \| u_1 \|_{B^{s}_{p,r}} + \| u_2 \|_{B^{s}_{p,r}})^2.
\end{aligned}
\end{equation}
Therefore, submitting \eqref{SCNS-a2} - \eqref{SCNS-a3} into \eqref{SCNS-a1} yields
\begin{equation}
\begin{aligned}
  \| \omega(t) \|_{B^{s-1}_{p,r}} & \leq \| \omega(0) \|_{B^{s-1}_{p,r}} + C \int_{0}^{\infty}(|\alpha(\tau)| + |\beta(\tau)| + |\gamma(\tau)| + |\xi(\tau)|) \\
& \quad \times (1 + \| u_1 \|_{B^{s}_{p,r}} + \| u_2 \|_{B^{s}_{p,r}})^2 \| \omega \|_{B^{s-1}_{p,r}} d\tau.
\end{aligned}
\end{equation}

On the other hand, it is easy to see that the above inequality can also hold for $s>2+\frac{d}{p}$. Hence, applying Gronwall's inequality can lead to the result inequality \eqref{SCNS-t11}.

At last, we use the complex interpolation property to prove (ii). Let $ 1+\frac{d}{p} = \theta
(1 + \frac{d}{2p}) + (1 - \theta)(2 + \frac{d}{2p})$, and $\theta = 1 - \frac{d}{2p} \in (0,1)$. According to Lemma \ref{interpolation}(1), we get
\begin{equation}
\begin{aligned}
  \| u_1 - u_2 \|_{B^{s-1}_{p,r}} & \leq \| u_1 - u_2 \|^{\theta}_{B^{1 + \frac{d}{2p}}_{p,r}} \| u_1 - u_2 \|^{1 - \theta}_{B^{2 + \frac{d}{2p}}_{p,r}} \\
& \leq C \| u_1 - u_2 \|^{\theta}_{B^{1 + \frac{d}{2p}}_{p,r}} (\| u_1 \|_{B^{2 + \frac{d}{2p}}_{p,r}} + \| u_2 \|_{B^{2 + \frac{d}{2p}}_{p,r}})^{1 - \theta} \\
& \leq ( \| u_1 \|_{B^{2 + \frac{d}{2p}}_{p,r}} + \| u_2 \|_{B^{2 + \frac{d}{2p}}_{p,r}} )^{1-\theta} \| u_1(0) - u_2(0) \|^{\theta}_{B^{1 + \frac{d}{2p}}_{p,r}}
\\
& \times e^{C\theta \int_{0}^{\infty} \left( |\alpha(\tau)| + |\beta(\tau)| + |\gamma(\tau)| + |\xi(\tau)| \right) \cdot (1 + \| u_1 \|_{B^{2 + \frac{d}{2p}}_{p,r}} + \| u_2 \|_{B^{2 +\frac{d}{2p}}_{p,r}})^2 \, d\tau} \\
& \leq (\| u_1 \|_{B^{s}_{p,r}} + \| u_2 \|_{B^{s}_{p,r}})^{1-\theta} \| u_1(0) - u_2(0) \|^{\theta}_{B^{s-1}_{p,r}} \\
& \times e^{C\theta \int_{0}^{\infty}(|\alpha(\tau)| + |\beta(\tau)| + |\gamma(\tau)| + |\xi(\tau)|) \cdot (1 + \| u_1 \|_{B^{s}_{p,r}} + \| u_2 \|_{B^{s}_{p,r}})^2 d\tau},\\
\end{aligned}
\end{equation}
which yields \eqref{SCNS-t12}.

Therefore, the inequality \eqref{SCNS-t11} and \eqref{SCNS-t12} implies the uniqueness and continuity with respect to the initial data.
\end{proof}

\section{Global well-posedness in critical Besov Spaces}
\subsection{Approximate solutions}
We construct the approximate solutions via the Friedrichs iterative method. Starting from $u^{(1)} \overset{\cdot}{=} S_1 u_0$, we recursively define a sequence of functions $(u^{(n)})_{n \geq 1}$ by solving the following linear transport equations
\begin{equation}\label{SCNS-d1}
\left\{
\begin{aligned}
& \partial_t u^{(n+1)} + (\alpha(t) + \beta(t) u^{(n)}) \nabla S^{(n)} \cdot \nabla u^{(n+1)} = -(\gamma(t) u^{(n)} + \xi(t) (u^{(n)})^2) \Delta S^{(n)},\\
& S^{(n)} = (I - \Delta)^{-1} u^{(n)},\\
& u^{(n+1)}\big|_{t = 0} = S_{n+1}u_0.
\end{aligned}
\right.
\end{equation}
This approximation system \eqref{SCNS-d1} is similar to the non-cridical case, we adopt it here just for convenience. It follows from the definition of frequency truncation operator that $S_{n+1} u_0 \in \mathop{\cap}\limits_{s \in \mathbb{R}} B^s_{2,1}$. Assume by induction that, given $n \in  \mathbb{N}$ and any positive $T$, the approximate solution $u^{(n)} \in L^{\infty}([0,T];B^{1+\frac{d}{2}}_{2,1})$. Since the Besov space $B^{1+\frac{d}{2}}_{2,1}$ is a Banach algebra, we have
\begin{equation}\label{d2}
\begin{aligned}
\int_{0}^{T}\|[\gamma(t)u^{(n)} &  + \xi(t)(u^{(n)})^2]\Delta S^{(n)}\|_{B^{1+\frac{d}{2}}_{2,1}} dt \\
& \le C \int_{0}^{T} \|\gamma(t)u^{(n)} + \xi(t)(u^{(n)})^2\|_{B^{1+\frac{d}{2}}_{2,1}} \cdot \|(I - \Delta)^{-1} \Delta u^{(n)}\|_{B^{1+\frac{d}{2}}_{2,1}} dt \\
& \le C \int_{0}^{T} [|\gamma(t)| \cdot \|u^{(n)}\|_{B^{1+\frac{d}{2}}_{2,1}}^2 +  |\xi(t)| \cdot \|u^{(n)}\|_{B^{1+\frac{d}{2}}_{2,1}}^3 ] dt \\
& \le C \int_{0}^{T} (|\gamma(t)| + |\xi(t)|) (\|u^{(n)}\|_{B^{1+\frac{d}{2}}_{2,1}}^2 + \|u^{(n)}\|_{B^{1+\frac{d}{2}}_{2,1}}^3 ) dt < \infty ,
\end{aligned}
\end{equation}
where we have used $\alpha$, $\beta$, $\gamma$, $\xi \in L^1([0,\infty);\mathbb{R})$. And the above estimate implies that $[\gamma(t)u^{(n)} + \xi(t)(u^{(n)})^2]\Delta S^{(n)} \in L^1([0,T];B^{1+\frac{d}{2}}_{2,1})$ for any positive $T$. Thereby the nonlinear terms on the right hand side of system \eqref{SCNS-d1} belong to $L^1([0,T];B^{1+\frac{d}{2}}_{2,1})$ for any positive $T$.

Furthermore, since
\begin{equation}\label{d3}
\begin{aligned}
\int_{0}^{T} \|\nabla [(\alpha(t) & + \beta(t) u^{(n)}) \nabla S^{(n)}]\|_{B^{\frac{d}{2}}_{2,1}} dt \\
& \le C \int_{0}^{T} \|(\alpha(t) + \beta(t) u^{(n)}) \cdot (I - \Delta)^{-1} \nabla u^{(n)}\|_{B^{1+\frac{d}{2}}_{2,1}} dt \\
& \le C \int_{0}^{T} [|\alpha(t)| \cdot \|u^{(n)}\|_{B^{1+\frac{d}{2}}_{2,1}} + |\beta(t)| \cdot \|u^{(n)}\|_{B^{1+\frac{d}{2}}_{2,1}}^2 ] dt \\
& \le C \int_{0}^{T} (|\alpha(t)| +|\beta(t)|) (\|u^{(n)}\|_{B^{1 +\frac{d}{2}}_{2,1}} + \|u^{(n)}\|_{B^{1 + \frac{d}{2}}_{2,1}}^2)  dt < \infty ,
\end{aligned}
\end{equation}
one can deduce that $\nabla [(\alpha(t) + \beta(t) u^{(n)}) \nabla S^{(n)}] \in L^1([0,T];B^{\frac{d}{2}}_{2,1})$ for any positive $T$. Thanks to Lemma \ref{euth}, the system \eqref{SCNS-d1} has a unique solution $u^{(n+1)} \in C([0,\infty];B^{1 + \frac{d}{2}}_{2,1})$.

\subsection{Uniform bound}
By applying Lemma \ref{prior estimates} to the system \eqref{SCNS-d1} with respect to $u^{(n)}$, we get
\begin{equation}\label{SCNS-d2}
\begin{aligned}
  \|u^{(n+1)}(t)\|_{B^{1 + \frac{d}{2}}_{2,1}} & \leq \|S_{n+1} u_0\|_{B^{1 + \frac{d}{2}}_{2,1}} + \int_0^t \|[\gamma(\tau)u^{(n)} + \xi(\tau)(u^{(n)})^2]\Delta S^{(n)}\|_{B^{1 + \frac{d}{2}}_{2,1}} d\tau \\
& \quad + C \int_0^t V'(\tau) \|u^{(n+1)}(\tau)\|_{B^{1 + \frac{d}{2}}_{2,1}} d\tau,\\
\end{aligned}
\end{equation}
where $V(t) = \int_0^t \|\nabla [(\alpha(\tau) + \beta(\tau) u^{(n)}) \nabla S^{(n)}](\tau)\|_{B^{\frac{d}{2}}_{2,1}}d\tau$.

For the terms on the right-hand side of \eqref{SCNS-d2}, we first get from the property of $\chi \left( \cdot \right)$ that $\|S_{n+1}u_0\|_{B^s_{p,r}} \le C\|u_0\|_{B^s_{p,r}}$, for some positive constant independent of n. Since $B^{1 + \frac{d}{2}}_{2,1}(\mathbb{R}^d)$ is a Banach algebra, and the operator $(I - \Delta)^{-1}$ is a $S^{-2}$-multiplier, we can obtain the following results from \eqref{d2} and \eqref{d3}.
\begin{equation}
\begin{aligned}
\|[\gamma(\tau)u^{(n)} + \xi(\tau)(u^{(n)})^2]\Delta S^{(n)}\|_{B^{1 + \frac{d}{2}}_{2,1}} \le C (|\gamma(\tau)| + |\xi(\tau)|) (\|u^{(n)}\|_{B^{1+\frac{d}{2}}_{2,1}}^2 + \|u^{(n)}\|_{B^{1+\frac{d}{2}}_{2,1}}^3 ),\notag
\end{aligned}
\end{equation}
and
\begin{equation}
\begin{aligned}
\|\nabla [(\alpha(\tau) + \beta(\tau) u^{(n)}) \nabla S^{(n)}]\|_{B^{\frac{d}{2}}_{2,1}} \le C (|\alpha(\tau)| +|\beta(\tau)|) (\|u^{(n)}\|_{B^{1 +\frac{d}{2}}_{2,1}} + \|u^{(n)}\|_{B^{1 + \frac{d}{2}}_{2,1}}^2). \notag
\end{aligned}
\end{equation}
Plugging the last two estimates into \eqref{SCNS-d2} and using the Young's inequality lead to
\begin{equation}
\begin{aligned}
  \|u^{(n+1)}(t)\|_{B^{1 + \frac{d}{2}}_{2,1}} & \leq C \|u_0\|_{B^{1 + \frac{d}{2}}_{2,1}} + C \int_0^t (|\gamma(\tau)| + |\xi(\tau)|) (\|u^{(n)}\|_{B^{1+\frac{d}{2}}_{2,1}}^2 + \|u^{(n)}\|_{B^{1+\frac{d}{2}}_{2,1}}^3 ) d\tau \\
& \quad + C \int_0^t (|\alpha(\tau)| +|\beta(\tau)|) (\|u^{(n)}\|_{B^{1 +\frac{d}{2}}_{2,1}} + \|u^{(n)}\|_{B^{1 + \frac{d}{2}}_{2,1}}^2) \cdot \|u^{(n+1)}(\tau)\|_{B^{1 + \frac{d}{2}}_{2,1}} d\tau\\
& \leq C \|u_0\|_{B^{1 + \frac{d}{2}}_{2,1}} + C \int_0^t (|\gamma(\tau)| + |\xi(\tau)|)(1+\|u^{(n)}(\tau)\|_{B^{1 + \frac{d}{2}}_{2,1}})^3 d\tau \\
& \quad + C \int_0^t (|\alpha(\tau)| + |\beta(\tau)|)(1+\|u^{(n)}(\tau)\|_{B^{1 + \frac{d}{2}}_{2,1}})^2 \cdot \|u^{(n+1)}(\tau)\|_{B^{1 + \frac{d}{2}}_{2,1}} d\tau. \notag
\end{aligned}
\end{equation}
Using Gronwall lemma to last inequality leads to
\begin{equation}\label{SCNS-d3}
\begin{aligned}
  1 + \|u^{(n+1)}(t)\|_{B^{1 + \frac{d}{2}}_{2,1}} & \leq C e^{C \int_0^t [|\alpha(\tau)| + |\beta(\tau)|](1+\|u^{(n)}(\tau)\|_{B^{1 + \frac{d}{2}}_{2,1}})^2 d\tau} \\
& \quad \times \left(1 + \|u_0\|_{B^{1 + \frac{d}{2}}_{2,1}} + \int_0^t [|\gamma(\tau)| + |\xi(\tau)|](1+\|u^{(n)}(\tau)\|_{B^{1 + \frac{d}{2}}_{2,1}})^3 d\tau \right).
\end{aligned}
\end{equation}

Define
\begin{equation}
	  H_0 \stackrel{\text{def}}{=} H^{(n)}(0)=1+\|u_0\|_{B^{1 + \frac{d}{2}}_{2,1}}, \notag
\end{equation}
and
\begin{equation}
	  H^{(n)}(t) \stackrel{\text{def}}{=} 1+\|u^{(n)}(t)\|_{B^{1 + \frac{d}{2}}_{2,1}},
	\quad \text{for all } n \geq 1. \notag
\end{equation}
It follows from the inequality \eqref{SCNS-d3} that
\begin{equation}\label{SCNS-d5}
\begin{aligned}
  H^{(n+1)}(t) & \leq C e^{C \int_0^t [|\alpha(\tau)| + |\beta(\tau)|](H^{(n)}(\tau))^2 d\tau} \\
& \quad \times \left(H_0  + \int_0^t [|\gamma(\tau)| + |\xi(\tau)|](H^{(n)}(\tau))^3 d\tau \right).
\end{aligned}
\end{equation}

Similar to \eqref{SCNS-t5} in Section 3, the uniform boundedness of the $H^{(n+1)}(t)$ can be proved by the same method. Therefore, we can also obtain the following conclusion by means of mathematical induction with respect to $n$, namely
\begin{equation}
\sup_{n \geq 1}\sup_{t \in [0, \infty)} H^{(n+1)}(t) \leq 2C h(H_0),\notag
\end{equation}
which implies the uniform bound
\begin{equation}\label{c-uniform-bound}
\sup_{n \geq 1} \| u^{(n)} \|_{L^\infty([0, \infty); B^{1 + \frac{d}{2}}_{2,1})} \leq 2C h(H_0).
\end{equation}

As a result, the approximate solutions $(u^{(n)})_{n \geq 1}$ is uniformly bounded in $C([0,\infty);B^{1 + \frac{d}{2}}_{2,1})$. Moreover, using the system \eqref{SCNS-d1} itself, one can verify that the sequence $(\partial_t u^{(n)})_{n \geq 1}$ is uniformly bounded in $C([0,\infty);B^{\frac{d}{2}}_{2,1})$. Therefore, we obtain that $(u^{(n)})_{n \geq 1}$ is uniformly bounded in $E^{1 + \frac{d}{2}}_{2,1}(\infty)$.

\subsection{Convergence}
We first show that the approximate solutions $(u^{(n)})_{n \geq 1}$ is a Cauchy sequence in $C([0,\infty);B^{\frac{d}{2}}_{2,\infty}) $, and then extend the convergent result to $C([0,\infty);B^{\frac{d}{2}}_{2,1}) $ by using an interpolation argument, which will be more technique than the non-critical case. To this end, for any $(m,n) \in \mathbb{N^+} \times \mathbb{N^+}$, we get from \eqref{SCNS-d1} that
\begin{equation}\label{SCNS-d8}
\begin{aligned}
  [\partial_t &+ (\alpha(t) + \beta(t) u^{(n+m)}) \nabla S^{(n+m)}\cdot \nabla]  (u^{(n+m+1)}-u^{(n+1)})
&= -[(\alpha(t) + \beta(t) u^{(n+m)}) \cdot \nabla S^{(n+m)} - (\alpha(t) + \beta(t) u^{(n)}) \nabla S^{(n)}] \\
& \quad \times  \nabla u^{(n+1)} - f(u^{(n)},u^{(n+m)}),
\end{aligned}
\end{equation}
with the initial conditions
\begin{equation}
\begin{aligned}
  (u^{(n+m+1)} - u^{(n+1)})(0,x) = S_{n+m+1}u_0(x) - S_{n+1}u_0(x),\notag
\end{aligned}
\end{equation}
where
$$f(u^{(n)},u^{(n+m)}) = (\gamma(t) u^{(n+m)} + \xi(t) (u^{(n+m)})^2) \Delta S^{(n+m)} - (\gamma(t) u^{(n)} + \xi(t) (u^{(n)})^2) \Delta S^{(n)}.$$

Applying the Lemma \ref{prior estimates} to \eqref{SCNS-d8}, we deduce that
\begin{equation}\label{SCNS-d9}
\begin{aligned}
\| & (u^{(n+m+1)} - u^{(n+1)})(t) \|_{B^{\frac{d}{2}}_{2,\infty} } \\
& \leq e ^{C \int_0^t \| \nabla [(\alpha(\tau) + \beta(\tau) \cdot u^{(n+m)}) \nabla S^{(n+m)}] \|_{B^{\frac{d}{2}}_{2,\infty} \cap L^\infty} \, d\tau} \| S_{n+m+1}u_0 - S_{n+1}u_0 \|_{B^{\frac{d}{2}}_{2,\infty} } \\
& \quad + \int_0^t e ^{C \int_{\tau}^t \| \nabla [ (\alpha(t') + \beta(t') \cdot u^{(n+m)}) \nabla S^{(n+m)}] \|_{B^{\frac{d}{2}}_{2,\infty} \cap L^\infty} \, dt'} \\
& \quad \times \large(\| [(\alpha(\tau) + \beta(\tau) u^{(n+m)}) \cdot \nabla S^{(n+m)} - (\alpha(\tau) + \beta(\tau) u^{(n)}) \cdot \nabla S^{(n)}] \cdot \nabla u^{(n+1)} \|_{B^{\frac{d}{2}}_{2,\infty}} \\
& \quad + \|f(u^{(n)},u^{(n+m)}) \|_{B^{\frac{d}{2}}_{2,\infty}} \large) d\tau. \\
\end{aligned}
\end{equation}
Let us estimate the three terms on the right-hand side of \eqref{SCNS-d9}. First,we get by using the uniform bound for approximations that, for any $ 0 \leq \tau \leq t \leq \infty$,
\begin{equation}
\begin{aligned}
\int_{\tau}^t \| & \nabla [(\alpha(r) + \beta(r) \cdot u^{(n+m)}) \nabla S^{(n+m)}] \|_{B^{\frac{d}{2}}_{2,\infty} \cap L^{\infty} }dr  \\
& \leq C \int_{\tau}^t \left(|\alpha(r)| \cdot \|\nabla S^{(n+m)} \|_{B^{1 + \frac{d}{2}}_{2,\infty} \cap Lip} + |\beta(r)| \cdot \| u^{(n+m)} \nabla S^{(n+m)} \|_{B^{1 + \frac{d}{2}}_{2,\infty} \cap Lip} \right) dr  \\
& \leq C \int_{\tau}^t (|\alpha(r)| + |\beta(r)|) \cdot (1 + \|u^{(n+m)} \|_{B^{1 + \frac{d}{2}}_{2,1}} ) \cdot \| (I - \Delta)^{-1} u^{(n+m)} \|_{B^{2 + \frac{d}{2}}_{2,\infty}} \, dr  \\
& \leq C \int_{\tau}^t (|\alpha(r)| + |\beta(r)|) \cdot (\|u^{(n+m)} \|_{B^{1 + \frac{d}{2}}_{2,1}} + \|u^{(n+m)} \|^2_{B^{1 + \frac{d}{2}}_{2,1}} ) dr <  \infty. \\ \notag
\end{aligned}
\end{equation}
In terms of the definition of the cut-off low frequency operator, we have
\begin{equation*}
\begin{aligned}
\| S_{n+m+1}u_0 - S_{n+1}u_0 \|_{B^{\frac{d}{2}}_{2,\infty}} 
& = \sup_{p\geq {-1}} 2^{\frac{d}{2} p } \left\| \Delta_p \sum_{q=n+1}^{n+m} \Delta_q u_0 \right\|_{L^2} \\
& \leq C \sum_{p=n}^{n+m+1} \big(2^{-p} 2^{(1+\frac{d}{2})p} \sum_{|p-q| \leq 1}  \left\| \Delta_p \Delta_q u_0 \right\|_{L^2} \big ) \\
& \leq C 2^{-n} \sum_{p=n}^{n+m+1} 2^{(1+\frac{d}{2})p} \left\| \Delta_p  u_0 \right\|_{L^2} \\
& \leq C 2^{-n} \| u_0 \|_{B^{1+\frac{d}{2}}_{2,1}}, \\
\end{aligned}
\end{equation*}
where we have used the commutativity of $\Delta_q S_n = S_n\Delta_q$, the orthogonality $\Delta_p\Delta_q = 0$ for $|p-q | \geq 2$ as well as the uniform estimates $\| \Delta_n u \|_{L^p}$, $\| S_n u \|_{L^p} \leq C \| u \|_{L^p}$, for all $1 \leq p \leq \infty$, $n \geq -1$, due to the Young's inequality, where the positive constant $C$ is independent of $n$.

For the second term on the right-hand side of \eqref{SCNS-d9}, we have
\begin{equation*}
\begin{aligned}
\big\| & \big[(\alpha(\tau) + \beta(\tau) u^{(n+m)}) \cdot \nabla S^{(n+m)} - (\alpha(\tau) + \beta(\tau) u^{(n)}) \cdot \nabla S^{(n)}] \cdot \nabla u^{(n+1)} \big \|_{B^{\frac{d}{2}}_{2,\infty}} \\
& \leq C [\big(|\alpha(\tau)| + |\beta(\tau)| \cdot \| u_n \|_{B^{\frac{d}{2}}_{2,\infty}\cap L^{\infty}} \big) \| \nabla (I - \Delta)^{-1} (u^{(n+m)}-u^{(n)}) \|_{B^{\frac{d}{2}}_{2,\infty}\cap L^{\infty}} \\
& \quad + |\beta(\tau)| \cdot \| (u^{(n+m)}-u^{(n)} \|_{B^{\frac{d}{2}}_{2,\infty}\cap L^{\infty}} \|  (I - \Delta)^{-1} \nabla u^{(n+m)} \|_{B^{\frac{d}{2}}_{2,\infty}\cap L^{\infty}} \big] \cdot \| \nabla u^{(n+1)} \|_{B^{\frac{d}{2}}_{2,\infty}\cap L^{\infty}}\\
& \leq C [\big(|\alpha(\tau)| + |\beta(\tau)| \cdot \| u_n \|_{B^{1+\frac{d}{2}}_{2,1}} \big) \| (u^{(n+m)}-u^{(n)}) \|_{B^{\frac{d}{2}}_{2,1}} \\
& \quad + |\beta(\tau)| \cdot \| (u^{(n+m)}-u^{(n)} \|_{B^{\frac{d}{2}}_{2,1}} \| u^{(n+m)} \|_{B^{1+\frac{d}{2}}_{2,1}} \big] \cdot \| u^{(n+1)} \|_{B^{1+\frac{d}{2}}_{2,1}}\\
& \leq C \| (u^{(n+m)}-u^{(n)}) \|_{B^{\frac{d}{2}}_{2,1}} (|\alpha(\tau)| + |\beta(\tau)| \cdot \| u_n \|_{B^{1+\frac{d}{2}}_{2,1}} + |\beta(\tau)| \cdot \| u^{(n+m)} \|_{B^{1+\frac{d}{2}}_{2,1}}) \cdot \| u^{(n+1)} \|_{B^{1+\frac{d}{2}}_{2,1}} \\
& \leq C (|\alpha(\tau)| + |\beta(\tau)| )(1+ \| u^{(n)} \|_{B^{1+\frac{d}{2}}_{2,1}} + \| u^{(n+m)} \|_{B^{1+\frac{d}{2}}_{2,1}})^2 \cdot \| (u^{(n+m)}-u^{(n)}) \|_{B^{\frac{d}{2}}_{2,1}}.
\end{aligned}
\end{equation*}
Similarly, we can also estimate the third term on the right-hand side of \eqref{SCNS-d9} as
\begin{equation*}
\begin{aligned}
\|f(u^{(n)},u^{(n+m)}) \|_{B^{\frac{d}{2}}_{2,\infty}} & = \|(\gamma(t) u^{(n+m)} + \xi(t) (u^{(n+m)})^2) \Delta S^{(n+m)} - (\gamma(t) u^{(n)} + \xi(t) (u^{(n)})^2) \Delta S^{(n)} \|_{B^{\frac{d}{2}}_{2,\infty}} \\
& = \|(\gamma(t) (u^{(n+m)} - u^{(n)}) \Delta S^{(n+m)} + \gamma(t) u^{(n)} \Delta S^{(n+m)} - \gamma(t) u^{(n)} \Delta S^{(n)} \\
& \quad + \xi(t) (u^{(n+m)}+ u^{(n)})(u^{(n+m)}- u^{(n)}) \Delta S^{(n+m)} + \xi(t) (u^{(n)})^2 \Delta S^{(n+m)} \\
& \quad - \xi(t) (u^{(n)})^2) \Delta S^{(n)} \|_{B^{\frac{d}{2}}_{2,\infty}} \\
& = \| \gamma(t) (u^{(n+m)} - u^{(n)}) u^{(n+m)} + \gamma(t) (u^{(n+m)} - u^{(n)}) u^{(n)}\\
& \quad + \xi(t) (u^{(n+m)}- u^{(n)})((u^{(n+m)})^2+ u^{(n)}u^{(n+m)}) + \xi(t) (u^{(n+m)}- u^{(n)})(u^{(n)})^2 \|_{B^{\frac{d}{2}}_{2,\infty}} \\
& \leq C(|\gamma(t)| + |\xi(t)|) \cdot \|u^{(n+m)}- u^{(n)}\|_{B^{\frac{d}{2}}_{2,1}} (1+ \| u^{(n+m)} \|_{B^{1+\frac{d}{2}}_{2,1}} + \| u^{(n)} \|_{B^{1+\frac{d}{2}}_{2,1}})^2.
\end{aligned}
\end{equation*}
Plugging above several estimates into \eqref{SCNS-d9}, using the Young's inequality and the uniform bound for the sequence $(u^{(n)})_{n \geq 1}$, we get that
\begin{equation}\label{SCNS-d10}
\begin{aligned}
\| (u^{(n+m+1)} & - u^{(n+1)})(t) \|_{B^{\frac{d}{2}}_{2,\infty} } \leq CH_0 2^{-n} + C(1+4Ch(H_0))^2 \\
& \quad \times \int_0^t (|\alpha(\tau)| + |\beta(\tau)| + |\gamma(\tau)| + |\xi(\tau)|) \| (u^{(n+m)} - u^{(n)})(\tau) \|_{B^{\frac{d}{2}}_{2,1} } d\tau.
\end{aligned}
\end{equation}

We apply Lemma \ref{interpolation}(2), the Logarithmic type interpolation inequality that characterizes the difference between the Besov spaces $B^{s}_{2,1}$ and $B^{s}_{2,\infty}$ with the parameters $p=2$, $s=\frac{d}{2}$ and $\epsilon = 1$, to the term $u^{(n+m)} - u^{(n)}$. This yields the following estimate:
\begin{equation}
\begin{aligned}
\| u^{(n+m)} - u^{(n)} \|_{B^{\frac{d}{2}}_{2,1}}
\leq C \| u^{(n+m)} - u^{(n)} \|_{B^{\frac{d}{2}}_{2,\infty}}
\left(1 + \log \frac{\| u^{(n+m)} - u^{(n)} \|_{B^{1+\frac{d}{2}}_{2,\infty}}}{\| u^{(n+m)} - u^{(n)} \|_{B^{\frac{d}{2}}_{2,\infty}}} \right).
\end{aligned}
\end{equation}
For simplicity, we introduce the following notations:
$$
\mathscr{D}_{n,m}(t) := \| (u^{(n+m)} - u^{(n)}) (t)\|_{B^{\frac{d}{2}}_{2,\infty}},\quad \mathscr{D}_{n}(t) := \sup_{m \in \mathbb{N^+}}\mathscr{D}_{n,m}(t),
$$
$$
\mathscr{D}(t) := \mathop{\lim\sup}\limits_{n \to \infty}\mathscr{D}_{n}(t).
$$
It then follows from \eqref{SCNS-d10} that
\begin{equation}
\begin{aligned}
\mathscr{D}_{n+1,m}(t) & \leq CH_0 2^{-n} + C(1+4Ch(H_0))^2 \int_0^t (|\alpha(\tau)| + |\beta(\tau)| + |\gamma(\tau)| + |\xi(\tau)|) \\
& \quad \times \mathscr{D}_{n,m}(\tau) \left(1 + \log \frac{4Ch(H_0)}{\mathscr{D}_{n,m}(\tau)} \right) d\tau \\
& \leq CH_0 2^{-n} + C(1+4Ch(H_0))^2 \int_0^t (|\alpha(\tau)| + |\beta(\tau)| + |\gamma(\tau)| + |\xi(\tau)|) \\
& \quad \times \mathscr{D}_{n}(\tau) \left(1 + \log \frac{4Ch(H_0)}{\mathscr{D}_{n}(\tau)} \right) d\tau, \\
\end{aligned}
\end{equation}
where the second inequality used the facts that $\mathscr{D}_{n,m}(\tau) \leq \mathscr{D}_{n}(\tau)$, and $f(x) := x (1+ \log \frac{4Ch(H_0)}{x})$ is continuous and increasing for all $x \in (0, 4Ch(H_0)]$. As the right-hand side in the last inequality is independent of $m$, after taking the supremum on both sides with respect to $m$ to the left-hand side, we get
\begin{equation}\label{SCNS-d11}
\begin{aligned}
\mathscr{D}_{n+1}(t) & \leq CH_0 2^{-n} + C(1+4Ch(H_0))^2 \int_0^t (|\alpha(\tau)| + |\beta(\tau)| + |\gamma(\tau)| + |\xi(\tau)|) \\
& \quad \times \mathscr{D}_{n}(\tau) \left(1 + \log \frac{4Ch(H_0)}{\mathscr{D}_{n}(\tau)} \right) d\tau. \\
\end{aligned}
\end{equation}
In view of the definition of $\mathscr{D}(t)$, for any given $\epsilon > 0$,there exists an integer $N=N(\epsilon)>0$ such that
\begin{equation*}
\begin{aligned}
\mathscr{D}_{n}(t) \leq \mathscr{D}(t) + \epsilon, \quad \text{for any } n > N.
\end{aligned}
\end{equation*}
Therefore, the estimates \eqref{SCNS-d11} implies that
\begin{equation}\label{SCNS-d12}
\begin{aligned}
\mathscr{D}_{n+1}(t) & \leq CH_0 2^{-n} + C(1+4Ch(H_0))^2 \int_0^t (|\alpha(\tau)| + |\beta(\tau)| + |\gamma(\tau)| + |\xi(\tau)|) \\
& \quad \times (\mathscr{D}(\tau) + \epsilon) \left(1 + \log \frac{4Ch(H_0)}{\mathscr{D}(\tau) + \epsilon} \right) d\tau. \\
\end{aligned}
\end{equation}
Using the fact of
$$
  1+\log(x) = \log(ex) \leq (e+1)\log(e+x), \quad \forall x \geq 1,
$$
and taking the limit as $n \to +\infty$ in \eqref{SCNS-d12}, one can derive that
\begin{equation*}
\begin{aligned}
\mathscr D(t) & \leq C(1+4Ch(H_0))^2 \int_0^t (|\alpha(\tau)| + |\beta(\tau)| + |\gamma(\tau)| + |\xi(\tau)|) \\
& \quad \times (\mathscr{D}(\tau) + \epsilon) \left(1 + \log \frac{4Ch(H_0)}{\mathscr{D}(\tau) + \epsilon} \right) d\tau \\
& \leq C(e+1)(1+4Ch(H_0))^2 \int_0^t (|\alpha(\tau)| + |\beta(\tau)| + |\gamma(\tau)| + |\xi(\tau)|) \\
& \quad \times (\mathscr{D}(\tau) + \epsilon) \log \left(e+\frac{4Ch(H_0)}{\mathscr{D}(\tau) + \epsilon}\right) d\tau.
\end{aligned}
\end{equation*}
Let $\epsilon \to 0$, we get for any $t \in [0,\infty)$ that
\begin{equation}\label{SCNS-d13}
\begin{aligned}
\mathscr D(t) & \leq C(1+4Ch(H_0))^2 \int_0^{\infty} (|\alpha(\tau)| + |\beta(\tau)| + |\gamma(\tau)| + |\xi(\tau)|) \mu(\mathscr D(\tau)) d\tau, \\
\end{aligned}
\end{equation}
where
\begin{equation*}
\mu(x) =
\begin{cases}
x \log \left(e+\frac{4Ch(H_0)}{x}\right), & x \in (0,4Ch(H_0)], \\
0, & x=0. \\
\end{cases}
\end{equation*}
Notice that $\mu(x)$ is an increasing continuous function for $x \in (0,4Ch(H_0)]$, and hence a modulus of continuity. Moreover, by the substitution of variable $y = 4Ch(H_0)/x$, we have
\begin{equation*}
\begin{aligned}
\int_{0}^{4Ch(H_0)} \frac{dx}{\mu(x)} & = \int_{1}^{+\infty} \frac{dy}{y \log(e+y)} \\
& \geq \int_{1}^{+\infty} \frac{dy}{(e+y) \log(e+y)} \\
& =\log \log(e+y)|^{+\infty}_1 = +\infty,
\end{aligned}
\end{equation*}
which implies that the function $\mu(x)$ is actually an Osgood modulus of continuity on $[0,4Ch(H_0)]$.

Since $\alpha$, $\beta$, $\gamma$ and $\xi$ are all integrable functions from $[0,\infty)$ into $\mathbb{R}^+$, one can now apply the well-known Osgood lemma to \eqref{SCNS-d13} to obtain
\begin{equation*}
\begin{aligned}
\mathscr{D}(t) \equiv 0, \quad \text{for any } t \in [0,\infty).
\end{aligned}
\end{equation*}
From this property we deduce that
$$
0 \leq \mathop{\lim\inf}\limits_{n \to \infty}\mathscr{D}_{n}(t) \leq \mathop{\lim\sup}\limits_{n \to \infty}\mathscr{D}_{n}(t) = 0,
$$
which indicates that $\lim\limits_{n \to \infty} \mathscr{D}_{n}(t) = 0$, and therefore $(u^{(n)})_{n \geq 1}$ is a Cauchy sequence in the Banach space $C([0,\infty);B^{\frac{d}{2}}_{2,\infty})$.
To improve the convergence to a more regular space $C([0,\infty);B^{\frac{d}{2}}_{2,1})$, we shall apply an interpolation argument, which is based on the following lemma.
\begin{lemma}[Real interpolation inequality \cite{Chemin2004}]\label{lem:RI}
If $s_1$ and $s_2$ are real numbers such that $s_1 < s_2$, $\theta \in (0,1)$, and $(p,r)$ is in $[1,\infty]^2$, then we have
$$
\| f \|_{B^{\theta s_1 + (1-\theta) s_2}_{p,1}} \leq \frac{C}{s_1 - s_2} \left(\frac{1}{\theta} + \frac{1}{1-\theta}\right) \| f \|^{\theta}_{B^{s_1}_{p,\infty}} \| f \|^{1-\theta}_{B^{s_2}_{p,\infty}},
$$
for some positive constant C.
\end{lemma}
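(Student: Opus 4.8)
The plan is to prove this by a direct dyadic (Littlewood--Paley) decomposition, splitting the frequency range at a well-chosen integer and then optimizing. Write $s := \theta s_1 + (1-\theta)s_2$, so that $s_1 < s < s_2$, and assume both $\|f\|_{B^{s_1}_{p,\infty}}$ and $\|f\|_{B^{s_2}_{p,\infty}}$ are positive and finite (otherwise $f$ vanishes and the claim is trivial). The starting point is the identity $\|f\|_{B^{s}_{p,1}} = \sum_{j \ge -1} 2^{js}\|\Delta_j f\|_{L^p}$ together with the two elementary block bounds that come directly from the definition of the $B^{s_i}_{p,\infty}$ norms, namely $\|\Delta_j f\|_{L^p} \le 2^{-js_1}\|f\|_{B^{s_1}_{p,\infty}}$ and $\|\Delta_j f\|_{L^p} \le 2^{-js_2}\|f\|_{B^{s_2}_{p,\infty}}$, valid for every $j \ge -1$.

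Next I would fix an integer $N$ and split the sum as $\sum_{-1 \le j \le N} + \sum_{j > N}$. On the low-frequency part I apply the $s_1$-bound, producing $\sum_{-1 \le j \le N} 2^{j(s-s_1)}\|f\|_{B^{s_1}_{p,\infty}}$; since $s - s_1 = (1-\theta)(s_2-s_1) > 0$, this geometric sum is controlled by $\tfrac{C}{2^{s-s_1}-1}\,2^{N(s-s_1)}$, and the elementary estimate $2^{x}-1 \ge x\ln 2$ turns the prefactor into $\tfrac{C}{(1-\theta)(s_2-s_1)}$. On the high-frequency part I apply the $s_2$-bound, giving $\sum_{j > N} 2^{j(s-s_2)}\|f\|_{B^{s_2}_{p,\infty}}$ with $s - s_2 = -\theta(s_2-s_1) < 0$; the convergent geometric tail is bounded by $\tfrac{C}{1-2^{s-s_2}}\,2^{N(s-s_2)}$, and a similar lower bound for $1 - 2^{-x}$ converts its prefactor into $\tfrac{C}{\theta(s_2-s_1)}$. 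Collecting these two contributions gives
\begin{align*}
\|f\|_{B^{s}_{p,1}} \le \frac{C}{s_2-s_1}\Big(\frac{1}{1-\theta}\,2^{N(s-s_1)}\|f\|_{B^{s_1}_{p,\infty}} + \frac{1}{\theta}\,2^{N(s-s_2)}\|f\|_{B^{s_2}_{p,\infty}}\Big),
\end{align*}
where $s_2 - s_1 = |s_1 - s_2|$ is the quantity appearing in the stated factor.

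Finally I would optimize in $N$ by balancing the two terms, i.e.\ choosing $N$ to be the integer nearest to $\tfrac{1}{s_2-s_1}\log_2\big(\|f\|_{B^{s_2}_{p,\infty}}/\|f\|_{B^{s_1}_{p,\infty}}\big)$. With this choice one checks, using $s-s_1 = (1-\theta)(s_2-s_1)$ and $s-s_2 = -\theta(s_2-s_1)$, that each term equals a constant multiple of $\|f\|^{\theta}_{B^{s_1}_{p,\infty}}\|f\|^{1-\theta}_{B^{s_2}_{p,\infty}}$, which yields the asserted inequality with the prefactor $\tfrac{C}{s_2-s_1}\big(\tfrac1\theta+\tfrac1{1-\theta}\big)$. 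The only genuinely delicate point is this last optimization: $N$ must be an integer and is constrained to $N \ge -1$, so I expect the main work to lie in rounding the optimal real value to an admissible integer and in verifying that the resulting discretization error, together with the borderline situation in which the balancing index would fall below $-1$, is absorbed into the universal constant $C$ without spoiling the explicit dependence on $\theta$ and $s_2 - s_1$.
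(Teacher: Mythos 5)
The paper offers no proof of this lemma at all -- it is quoted verbatim (with a sign typo, $\tfrac{C}{s_1-s_2}$ for $\tfrac{C}{s_2-s_1}$, which you silently and correctly fixed) from \cite{Chemin2004} -- so your dyadic-splitting argument is in effect being compared with the classical textbook proof, and it is the same strategy. Moreover, the two points you flag as delicate are actually non-issues if you choose $N=\lfloor N^*\rfloor$ with $N^*=\tfrac{1}{s_2-s_1}\log_2\bigl(\|f\|_{B^{s_2}_{p,\infty}}/\|f\|_{B^{s_1}_{p,\infty}}\bigr)$ rather than the nearest integer: then $2^{N(s-s_1)}\le 2^{N^*(s-s_1)}$ and $2^{-(N+1)(s_2-s)}\le 2^{-N^*(s_2-s)}$, so there is no rounding loss at all, and the constraint $N\ge-1$ is automatic because $\|f\|_{B^{s_2}_{p,\infty}}\ge 2^{-(s_2-s_1)}\|f\|_{B^{s_1}_{p,\infty}}$ forces $N^*\ge -1$.

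The genuine gap is in your low-frequency estimate. The sum $\sum_{-1\le j\le N}2^{j(s-s_1)}$ contains the term $j=N$, so it is at least $2^{N(s-s_1)}$; your claimed bound $\tfrac{C}{2^{s-s_1}-1}\,2^{N(s-s_1)}$ therefore fails as soon as $2^{s-s_1}-1>C$, i.e.\ whenever $(1-\theta)(s_2-s_1)$ is large. The correct bound is $\bigl(1+\tfrac{1}{2^{s-s_1}-1}\bigr)2^{N(s-s_1)}$, and the additive $1$ cannot be discarded, so what the argument really proves is
\[
\| f \|_{B^{\theta s_1+(1-\theta)s_2}_{p,1}} \le C\left(1+\frac{1}{s_2-s_1}\Bigl(\frac{1}{\theta}+\frac{1}{1-\theta}\Bigr)\right) \| f \|^{\theta}_{B^{s_1}_{p,\infty}} \| f \|^{1-\theta}_{B^{s_2}_{p,\infty}}.
\]
This is not a defect you can repair by working harder, because the inequality as stated is false when $(s_2-s_1)\theta(1-\theta)$ is large: take $f$ with a single nonzero dyadic block ($\hat f$ supported in the region where $\varphi\equiv 1$, so $\Delta_j f=0$ for $j\neq 0$), for which every Besov norm of $f$ equals $\|f\|_{L^p}$; the left-hand side is then $\|f\|_{L^p}$, while the stated right-hand side is $\tfrac{C}{(s_2-s_1)\theta(1-\theta)}\|f\|_{L^p}$, which drops below $\|f\|_{L^p}$ once $\theta=\tfrac12$ and $s_2-s_1>4C$. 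So the lemma, as copied from the literature, must be read either with the additive constant retained or under an implicit bound on $s_2-s_1$. In every application in this paper one has $s_2-s_1\le 1$, in which case $\tfrac{1}{(s_2-s_1)\theta(1-\theta)}\ge 4$ and the additive constant is absorbed into the stated prefactor; in that regime both the statement and your argument are sound.
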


Indeed, for any $0 < \epsilon < 1$, there exists an index $\theta \in (0,1)$ such that
$$
\frac{d}{2} + \epsilon = \frac{d}{2} \theta + (1+\frac{d}{2})(1-\theta).
$$
Thanks to the Sobolev embedding $B^{\frac{d}{2} + \epsilon}_{2,1} \hookrightarrow B^{\frac{d}{2}}_{2,1}$ for any $\epsilon > 0$, and the Lemma \ref{lem:RI}, we deduce that
\begin{equation*}
\begin{aligned}
\| (u^{(n+m+1)} - u^{(n+1)})(t) \|_{B^{\frac{d}{2}}_{2,1} } & \leq C(\theta) \| (u^{(n+m+1)} - u^{(n+1)})(t) \|^{\theta}_{B^{\frac{d}{2}}_{2,\infty} } \| (u^{(n+m+1)} - u^{(n+1)})(t) \|^{1-\theta}_{B^{1+\frac{d}{2}}_{2,\infty} }\\
& \leq C(\theta)(4Ch(H_0))^{1-\theta} \| (u^{(n+m+1)} - u^{(n+1)})(t) \|^{\theta}_{B^{\frac{d}{2}}_{2,\infty} } \\
& \to 0,\quad \text{as } n,m \to \infty,
\end{aligned}
\end{equation*}
for some $\theta \in (0,1)$. Here the last limitation used the fact that $(u^{(n)})_{n \geq 1}$ is a Cauchy sequence in $C([0,\infty);B^{\frac{d}{2}}_{2,\infty}(\mathbb{R}^d))$, and hence one can find a function $u$
such that
\begin{equation}\label{SCNS-d14}
\begin{aligned}
u^n \to u \quad \text{strongly in} \quad C([0,\infty);B^{\frac{d}{2}}_{2,1}(\mathbb{R}^d)), \quad \text{as } n \to \infty.
\end{aligned}
\end{equation}

\subsection{Existence}
We verify that $u$ in \eqref{SCNS-d14} is actually the strong solution to the system \eqref{SCNS-d1}. As $B^{s}_{p,r} \hookrightarrow \mathscr{S}'$ for any $s \in \mathbb{R}$, we conclude from \eqref{SCNS-d14} that $u^{(n)} \overset{\mathscr{S}'}{\to} u$ as $n \to \infty$. Moreover, by using the fact that $(u^{(n)})_{n \geq 1}$ is uniformly bounded in $C([0,\infty);B^{1+\frac{d}{2}}_{2,1})$, it follows from Lemma \ref{lem:FT} that $u \in L^{\infty}([0,\infty);B^{1+\frac{d}{2}}_{2,1})$, it then follows from the fact of $r=1$ that actually $u \in C([0,\infty);B^{1+\frac{d}{2}}_{2,1})$ (cf. Lemma \ref{euth}). Furthermore, by virtue of the algebra property of $B^{\frac{d}{2}}_{2,1}$ and the identity $S = (I - \Delta)^{-1} u$, it is not difficult to verify that
\begin{equation*}
\begin{aligned}
(\alpha(t) + \beta(t) u) \nabla S \cdot \nabla u , \quad (\gamma(t) u + \xi(t) u^2) \Delta S \in C([0,\infty);B^{\frac{d}{2}}_{2,1}(\mathbb{R}^d)),
\end{aligned}
\end{equation*}
which together with the \eqref{SCNS-t1} itself imply that $\partial_t u$ belongs to $C([0,\infty);B^{\frac{d}{2}}_{2,1}(\mathbb{R}^d))$. Thanks to the strong convergence result \eqref{SCNS-d14}, it is then easy to pass to the limit in \eqref{SCNS-d1} and to demonstrate that $u \in E^{1+\frac{d}{2}}_{2,1}(\infty)$ is indeed a strong solution of the system \eqref{SCNS-t1}.

\subsection{Uniqueness}
In this section, we aim at establishing the uniqueness of the solution $u$ to the Cauchy problem \eqref{SCNS-t1}, which is a direct consequence of the following lemma.
\begin{lemma}\label{lem:d1}
Let $d \in \mathbb{N}^+$. Suppose that $u,v \in L^{\infty}([0,\infty);B^{1+\frac{d}{2}}_{2,\infty}(\mathbb{R}^d) \cap Lip(\mathbb{R}^d)) \cap C([0,\infty);B^{\frac{d}{2}}_{2,\infty}(\mathbb{R}^d))$ are two solutions to the system \eqref{SCNS-t1} with respect to the initial datum $u_0,v_0 \in B^{1+\frac{d}{2}}_{2,\infty}(\mathbb{R}^d) \cap Lip(\mathbb{R}^d)$, respectively. Denote $\omega := u-v$ and $\omega_0 := u_0-v_0$. If we denote
$$
\mathscr{A}(t) = 1 + \| u(t) \|^2_{B^{1+\frac{d}{2}}_{2,1}} + \| v(t) \|^2_{B^{1+\frac{d}{2}}_{2,1}},
$$
then we have
\begin{equation}\label{SCNS-e7}
\begin{aligned}
\frac{\| \omega(t) \|_{B^{\frac{d}{2}}_{2,\infty}}}{4eCh(2+\mathscr{A}_0)} \leq \exp \left\{16C^2h^2(2+\mathscr{A}_0) \int_{0}^{t} (|\alpha(t')| + |\beta(t')|)dt' \right\} \left(\frac{\| \omega_0 \|_{B^{\frac{d}{2}}_{2,\infty}}}{4eCh(2+\mathscr{A}_0)}\right)^{\sigma(t)},
\end{aligned}
\end{equation}
for all $t \in [0,\infty)$, where
$$
\sigma(t) = \exp \left\{-16C^2 \log(e+1) h^2(2+\mathscr{A}_0) \int_{0}^{t} (|\alpha(t')| + |\beta(t')| + |\gamma(t')| + |\xi(t')|)dt' \right\},
$$
and the function $h(x)$ is a modulus of continuity defined in \eqref{SCNS-t20}.
\end{lemma}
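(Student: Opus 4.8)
The plan is to derive a transport equation for the difference $\omega = u-v$ and then run a critical-regularity a priori estimate in $B^{\frac d2}_{2,\infty}$, closing it through the logarithmic interpolation inequality and Osgood's lemma rather than Gronwall's inequality. Subtracting the two copies of \eqref{SCNS-t1} satisfied by $u$ and $v$, one sees that $\omega$ solves the linear transport equation
\begin{equation*}
\partial_t\omega + (\alpha(t)+\beta(t)u)\nabla S_u\cdot\nabla\omega = (V_v-V_u)\cdot\nabla v + f(u,v),
\end{equation*}
where $V_u=(\alpha+\beta u)\nabla S_u$, $V_v=(\alpha+\beta v)\nabla S_v$, $S_u=(I-\Delta)^{-1}u$, $S_v=(I-\Delta)^{-1}v$, and $f(u,v)$ collects the differences of the zeroth-order terms $(\gamma w+\xi w^2)\Delta S_w$. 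The drift is $(\alpha+\beta u)\nabla S_u$, whose gradient governs the amplification factor, while the right-hand side is the forcing to be absorbed.

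First I would apply the transport estimate Lemma \ref{prior estimates} at the regularity index $\frac d2$, which is admissible because $\frac d2<1+\frac d2$ and the drift is Lipschitz thanks to $u,v\in L^\infty_t(B^{1+\frac d2}_{2,\infty}\cap Lip)$. This yields
\begin{equation*}
\|\omega(t)\|_{B^{\frac d2}_{2,\infty}} \leq e^{C\int_0^t W}\|\omega_0\|_{B^{\frac d2}_{2,\infty}} + \int_0^t e^{C\int_\tau^t W}\,\|(V_v-V_u)\cdot\nabla v + f(u,v)\|_{B^{\frac d2}_{2,\infty}}\,d\tau,
\end{equation*}
with $W(\tau)=\|\nabla V_u(\tau)\|_{B^{\frac d2}_{2,\infty}\cap L^\infty}$. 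As already computed in the convergence step, $W\leq C(|\alpha|+|\beta|)(\|u\|_{B^{1+\frac d2}_{2,1}}+\|u\|_{B^{1+\frac d2}_{2,1}}^2)$, so the amplification exponent is controlled by $\int_0^t(|\alpha|+|\beta|)$ times the uniform bound, producing exactly the prefactor $\exp\{16C^2h^2(2+\mathscr{A}_0)\int_0^t(|\alpha|+|\beta|)\}$. Only $\alpha,\beta$ enter here, since they are the only parameters in the drift.

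The heart of the argument is the forcing, and here the main obstacle appears: at the critical index $B^{\frac d2}_{2,\infty}$ is not a Banach algebra, so the products $(V_v-V_u)\cdot\nabla v$ and $f(u,v)$ resist a naive product rule. I would instead bound them in $B^{\frac d2}_{2,\infty}$ by the slightly stronger norm $\|\omega\|_{B^{\frac d2}_{2,1}}$ with coefficient $C(|\alpha|+|\beta|+|\gamma|+|\xi|)(1+\|u\|_{B^{1+\frac d2}_{2,1}}+\|v\|_{B^{1+\frac d2}_{2,1}})^2$, exactly as in the convergence step, and then invoke the logarithmic interpolation inequality Lemma \ref{interpolation}(2):
\begin{equation*}
\|\omega\|_{B^{\frac d2}_{2,1}} \leq C\|\omega\|_{B^{\frac d2}_{2,\infty}}\Bigl(1+\log\frac{\|\omega\|_{B^{1+\frac d2}_{2,\infty}}}{\|\omega\|_{B^{\frac d2}_{2,\infty}}}\Bigr).
\end{equation*}
Using the uniform bound $\|\omega\|_{B^{1+\frac d2}_{2,\infty}}\leq\|u\|_{B^{1+\frac d2}_{2,1}}+\|v\|_{B^{1+\frac d2}_{2,1}}\leq 4Ch(2+\mathscr{A}_0)$ and choosing the normalizing constant $M_0:=4eCh(2+\mathscr{A}_0)$, the bracket collapses to $\log(M_0/\|\omega\|_{B^{\frac d2}_{2,\infty}})$: the spurious $+1$ is absorbed by the factor $e$ in $M_0$, which is precisely why $M_0$ carries an $e$.

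It remains to integrate. Setting $\rho(t):=e^{-C\int_0^t W}\|\omega(t)\|_{B^{\frac d2}_{2,\infty}}$ absorbs the amplification factor; since $W\geq 0$ we have $\rho\leq\|\omega\|_{B^{\frac d2}_{2,\infty}}\leq M_0$, and the combined estimate reduces to the Osgood-type integral inequality
\begin{equation*}
\rho(t) \leq \rho(0) + \int_0^t \kappa(\tau)\,\mu(\rho(\tau))\,d\tau, \qquad \mu(x)=x\log(M_0/x),
\end{equation*}
with $\kappa(\tau)=16C^2\log(e+1)h^2(2+\mathscr{A}_0)(|\alpha(\tau)|+|\beta(\tau)|+|\gamma(\tau)|+|\xi(\tau)|)$, the factor $\log(e+1)$ coming from the elementary comparison used to pass to the clean modulus $x\log(M_0/x)$. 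Since $\int_0 \frac{dx}{x\log(M_0/x)}=+\infty$, the modulus is Osgood, and the substitution $w=\log(M_0/\rho)$, under which $\int \frac{dx}{\mu(x)}=\log\log(M_0/x)$ up to a constant, integrates the inequality explicitly to $\log(M_0/\rho(t))\geq\sigma(t)\log(M_0/\rho(0))$ with $\sigma(t)=\exp\{-\int_0^t\kappa\}$, that is $\rho(t)/M_0\leq(\rho(0)/M_0)^{\sigma(t)}$ — exactly the stated exponent. Undoing the integrating factor through $\|\omega(t)\|_{B^{\frac d2}_{2,\infty}}=e^{C\int_0^t W}\rho(t)$ and $\rho(0)=\|\omega_0\|_{B^{\frac d2}_{2,\infty}}$ then reproduces \eqref{SCNS-e7}. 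The delicate points are thus the critical product estimates combined with the log-interpolation, which force the Osgood machinery, and the bookkeeping that keeps $\alpha,\beta$ in the exponential prefactor while routing all four parameters through $\sigma(t)$; once $\mu(x)=x\log(M_0/x)$ is in hand, the integration is explicit and the clean power law drops out.
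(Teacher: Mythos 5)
Your proposal is correct and follows essentially the same route as the paper's proof: the transport estimate for $\omega$ in $B^{\frac{d}{2}}_{2,\infty}$ with drift $(\alpha+\beta u)\nabla S$, product estimates that bound the forcing by the stronger norm $\|\omega\|_{B^{\frac{d}{2}}_{2,1}}$, the logarithmic interpolation inequality of Lemma \ref{interpolation}(2), and an Osgood-type integration producing the power law with exponent $\sigma(t)$. The only difference is bookkeeping: you absorb the additive constant into $M_0=4eCh(2+\mathscr{A}_0)$ and integrate the clean modulus $x\log(M_0/x)$ explicitly, whereas the paper keeps the modulus $x\log\bigl(e+4Ch(2+\mathscr{A}_0)/x\bigr)$ and gets the factor $\log(e+1)$ from the comparison $x\log(e+C/x)\le\log(e+C)(1-\log x)$; your version needs no such comparison and actually yields a slightly larger exponent than the stated $\sigma(t)$, which still implies \eqref{SCNS-e7} because the base $\|\omega_0\|_{B^{\frac{d}{2}}_{2,\infty}}/M_0\le 1$.
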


\begin{proof}[\textbf{\emph{Proof of Lemma \ref{lem:d1}.}}]
Apparently,the function $\omega$ solves the following hyperbolic equation:
\begin{equation}\label{SCNS-e1}
\left\{
\begin{aligned}
& \partial_t \omega  + (\alpha(t) + \beta(t) u) \nabla S \cdot \nabla \omega = -[(\alpha(t) + \beta(t) u)\nabla S - (\alpha(t) + \beta(t) v)\nabla Q] \cdot \nabla v \\
& \quad -[(\gamma(t) u + \xi(t) u^2) \Delta S - (\gamma(t) v + \xi(t) v^2) \Delta Q] := F(t,x), \\
& \omega(0,x) = (u_0 - v_0)(x),
\end{aligned}
\right.
\end{equation}
where $S = (I - \Delta)^{-1} u$, $Q = (I - \Delta)^{-1} v$.
By applying the prior estimate (cf. Lemma \ref{prior estimates}) for linear transport equation in Besov spaces to Eq.\eqref{SCNS-e1} with $s = \frac{d}{2}$, $p = 2$ and $r = \infty$, we get
\begin{equation}\label{SCNS-e2}
\begin{aligned}
\| \omega(t) \|_{B^{\frac{d}{2}}_{2,\infty}} \leq e^{CV(t)}(\| \omega_0 \|_{B^{\frac{d}{2}}_{2,\infty}} + \int_{0}^{t} e^{-CV(\tau)} \| F(\tau,\cdot) \|_{B^{\frac{d}{2}}_{2,\infty}}) d\tau,
\end{aligned}
\end{equation}
with $V(t) = \int_{0}^{t} \| \nabla [(\alpha(t) + \beta(t) u) \nabla S]\|_{B^{\frac{d}{2}}_{2,\infty} \cap L^{\infty}} d\tau$. Since the spaces $B^{\frac{d}{2}}_{2,\infty} \cap L^{\infty}$ are Banach algebra, we get from the embedding $B^{1+\frac{d}{2}}_{2,1} (\mathbb{R}^d) \subset Lip(\mathbb{R}^d)$ and $B^{1+\frac{d}{2}}_{2,1} (\mathbb{R}^d) \subset B^{1+\frac{d}{2}}_{2,\infty} (\mathbb{R}^d)$ that
\begin{equation*}
\begin{aligned}
\| \nabla [(\alpha(t) + \beta(t) u) \nabla S] \|_{B^{\frac{d}{2}}_{2,\infty} \cap L^{\infty}}
&\leq C\| (\alpha(t) + \beta(t) u) \nabla S \|_{B^{1+\frac{d}{2}}_{2,\infty} \cap Lip} \\
&\leq C \big( |\alpha(t)| \cdot \| u \|_{B^{1+\frac{d}{2}}_{2,1}} + |\beta(t)| \cdot \| u \|^2_{B^{1+\frac{d}{2}}_{2,1}} \big) \\
& \leq C (|\alpha(t)| + |\beta(t)|) (\| u \|_{B^{1+\frac{d}{2}}_{2,1}} + \| u \|^2_{B^{1+\frac{d}{2}}_{2,1}}).
\end{aligned}
\end{equation*}
For the first nonlinear term involved in $F(t,x)$, by using the fact that $B^{\frac{d}{2}}_{2,\infty} (\mathbb{R}^d) \cap L^{\infty} (\mathbb{R}^d)$ is a Banach algebra and $(I - \Delta)^{-1}$ is a $S^{-2}$-multiplier, we have
\begin{equation*}
\begin{aligned}
\| & [(\alpha(t) + \beta(t) u)\nabla S - (\alpha(t) + \beta(t) v)\nabla Q] \cdot \nabla v \|_{B^{\frac{d}{2}}_{2,\infty}} \\
&\leq C \left(\| [(\alpha(t) + \beta(t) u)\nabla (I-\Delta)^{-1} \omega \|_{B^{\frac{d}{2}}_{2,\infty} \cap L^{\infty}} + \| \beta(t) \omega (I-\Delta)^{-1} v] \|_{B^{\frac{d}{2}}_{2,\infty} \cap L^{\infty}}\right) \cdot \|v\|_{B^{1+\frac{d}{2}}_{2,\infty} \cap Lip}\\
& \leq C \left( |\alpha(t)| + |\beta(t)| \cdot \|u\|_{B^{\frac{d}{2}}_{2,1}} + |\beta(t)| \cdot \|v\|_{B^{\frac{d}{2}}_{2,1}} \right) \|v\|_{B^{1+\frac{d}{2}}_{2,1}} \|\omega\|_{B^{\frac{d}{2}}_{2,1}} \\
& \leq C (|\alpha(t)| + |\beta(t)|)(1+ \|u\|^2_{B^{1+\frac{d}{2}}_{2,1}} + \|v\|^2_{B^{1+\frac{d}{2}}_{2,1}})\|\omega\|_{B^{\frac{d}{2}}_{2,1}},
\end{aligned}
\end{equation*}
where we have used the fact of $S - Q = (I - \Delta)^{-1} \omega$. For the second term involved in $F(t,x)$, we use the fact of $(I - \Delta)^{-1} \Delta$ is a $S^0-$multiplier and obtain
\begin{equation*}
\begin{aligned}
&\left\| (\gamma(t) u + \xi(t) u^2) \Delta S - (\gamma(t) v + \xi(t) v^2) \Delta Q \right\|_{B^{\frac{d}{2}}_{2,\infty}} \\
&\leq C \left( \|\omega\|_{B^{\frac{d}{2}}_{2,\infty} \cap L^{\infty}} (|\gamma(t)| + |\xi(t)| \cdot \|u\|_{B^{\frac{d}{2}}_{2,\infty}\cap L^{\infty}} + |\xi(t)| \cdot \|v\|_{B^{\frac{d}{2}}_{2,\infty}\cap L^{\infty}}) \|u\|_{B^{\frac{d}{2}}_{2,\infty}\cap L^{\infty}} \right. \\
&\quad + \left. (|\gamma(t)| \cdot \|v\|_{B^{\frac{d}{2}}_{2,\infty} \cap L^{\infty}} + |\xi(t)| \cdot \|v\|^2_{B^{\frac{d}{2}}_{2,\infty} \cap L^{\infty}}) \cdot \|(I- \Delta)^{-1} \Delta \omega\|_{B^{\frac{d}{2}}_{2,\infty} \cap L^{\infty}} \right) \\
& \leq C \left(|\gamma(t)| + |\xi(t)| \cdot \|u\|^2_{B^{\frac{d}{2}}_{2,\infty}\cap L^{\infty}} + |\xi(t)| \cdot \|uv\|_{B^{\frac{d}{2}}_{2,\infty}\cap L^{\infty}} \right. \\
& \quad \left. + |\gamma(t)| \cdot \|v\|_{B^{\frac{d}{2}}_{2,\infty} \cap L^{\infty}} + |\xi(t)| \cdot \|v\|^2_{B^{\frac{d}{2}}_{2,\infty} \cap L^{\infty}}\right) \cdot \|\omega\|_{B^{\frac{d}{2}}_{2,1}} \\
& \leq C (|\gamma(t)| + |\xi(t)|)(1+\|u\|^2_{B^{\frac{d}{2}}_{2,\infty} \cap L^{\infty}} + \|v\|^2_{B^{\frac{d}{2}}_{2,\infty} \cap L^{\infty}}) \cdot \|\omega\|_{B^{\frac{d}{2}}_{2,1}} \\
& \leq C (|\gamma(t)| + |\xi(t)|)(1+\|u\|^2_{B^{1+\frac{d}{2}}_{2,1}} + \|v\|^2_{B^{1+\frac{d}{2}}_{2,1}}) \cdot \|\omega\|_{B^{\frac{d}{2}}_{2,1}}.
\end{aligned}
\end{equation*}
Plugging the last three estimates into \eqref{SCNS-e2} leads to
\begin{equation*}
\begin{aligned}
\|\omega(t)\|_{B^{\frac{d}{2}}_{2,\infty}}
&\leq e^{C \int_{0}^{t} (|\alpha(\tau)| + |\beta(\tau)|) (\|u(\tau)\|_{B^{1+\frac{d}{2}}_{2,1}} + \|u(\tau)\|^2_{B^{1+\frac{d}{2}}_{2,1}}) d\tau} \\
&\quad \times \left( \|\omega_0\|_{B^{\frac{d}{2}}_{2,\infty}} + C \int_{0}^{t} e^{-C\int_{0}^{\tau} (|\alpha(r)| + |\beta(r)|) (\|u(r)\|_{B^{1+\frac{d}{2}}_{2,1}} + \|u(r)\|^2_{B^{1+\frac{d}{2}}_{2,1}}) dr} \right. \\
&\quad \left. \times (|\alpha(\tau)| + |\beta(\tau)| + |\gamma(\tau)| + |\xi(\tau)|)(1+\|u(\tau)\|^2_{B^{1+\frac{d}{2}}_{2,1}} + \|v(\tau)\|^2_{B^{1+\frac{d}{2}}_{2,1}}) \cdot \|\omega(\tau)\|_{B^{\frac{d}{2}}_{2,1}} d\tau \right).
\end{aligned}
\end{equation*}
Denote
\begin{equation*}
\begin{aligned}
\mathscr{L}(\tau) := e^{-C\int_{0}^{\tau} (|\alpha(r)| + |\beta(r)|) (\|u(r)\|_{B^{1+\frac{d}{2}}_{2,1}} + \|u(r)\|^2_{B^{1+\frac{d}{2}}_{2,1}}) dr} \|\omega(\tau)\|_{B^{\frac{d}{2}}_{2,1}} .
\end{aligned}
\end{equation*}
In order to estimate the $B^{\frac{d}{2}}_{2,\infty}$-norm of $\omega$, one should appropriately estimate the term $\mathscr{L}(\tau)$ on the right-hand side of last inequality. To this end, we shall apply the Logarithmic type interpolation inequality (cf. Lemma \ref{interpolation}(2)) to $\|\omega(\tau)\|_{B^{\frac{d}{2}}_{2,1}}$ and get that
\begin{equation*}
\begin{aligned}
\|\omega(\tau)\|_{B^{\frac{d}{2}}_{2,1}} \leq C\|\omega(\tau)\|_{B^{\frac{d}{2}}_{2,\infty}} \log \left(e+ \frac{\| \omega(\tau) \|_{B^{\frac{d}{2} + \epsilon}_{2,\infty}}}{\|\omega(\tau) \|_{B^{\frac{d}{2}}_{2,\infty}}} \right).
\end{aligned}
\end{equation*}
Since $\| \omega \|_{B^{1+\frac{d}{2}}_{2,\infty}} \leq 1+\|u\|^2_{B^{1+\frac{d}{2}}_{2,1}} + \|v\|^2_{B^{1+\frac{d}{2}}_{2,1}}$, by the Young's inequality and embedding $B^{1+\frac{d}{2}}_{2,1}(\mathbb{R}^d) \subset B^{1+\frac{d}{2}}_{2,\infty}(\mathbb{R}^d)$. The term $\mathscr{L}(\tau)$ can be estimated as
\begin{equation*}
\begin{aligned}
\mathscr{L}(\tau) & \leq e^{-C\int_{0}^{\tau} (|\alpha(r)| + |\beta(r)|) (\|u(r)\|_{B^{1+\frac{d}{2}}_{2,1}} + \|u(r)\|^2_{B^{1+\frac{d}{2}}_{2,1}}) dr} \|\omega(\tau)\|_{B^{\frac{d}{2}}_{2,\infty}} \\
& \quad \times \log \left(e+ \frac{\| \omega(\tau) \|_{B^{\frac{d}{2} + \epsilon}_{2,\infty}}}{\|\omega(\tau) \|_{B^{\frac{d}{2}}_{2,\infty}}} \right) \\
& \leq e^{-C\int_{0}^{\tau} (|\alpha(r)| + |\beta(r)|) (\|u(r)\|_{B^{1+\frac{d}{2}}_{2,1}} + \|u(r)\|^2_{B^{1+\frac{d}{2}}_{2,1}}) dr} \|\omega(\tau)\|_{B^{\frac{d}{2}}_{2,\infty}} \\
& \quad \times \log \left(e+ \frac{\| \omega(\tau) \|_{B^{\frac{d}{2} + \epsilon}_{2,\infty}}}{e^{-C\int_{0}^{\tau} (|\alpha(r)| + |\beta(r)|) (\|u(r)\|_{B^{1+\frac{d}{2}}_{2,1}} + \|u(r)\|^2_{B^{1+\frac{d}{2}}_{2,1}}) dr}\|\omega(\tau) \|_{B^{\frac{d}{2}}_{2,\infty}}} \right). \\
\end{aligned}
\end{equation*}
Therefore,
\begin{equation}\label{SCNS-e3}
\begin{aligned}
\|\omega(t)\|_{B^{\frac{d}{2}}_{2,\infty}}
&\leq e^{C \int_{0}^{t} (|\alpha(\tau)| + |\beta(\tau)|) (\|u(\tau)\|_{B^{1+\frac{d}{2}}_{2,1}} + \|u(\tau)\|^2_{B^{1+\frac{d}{2}}_{2,1}}) d\tau} \\
&\quad \times \left[ \|\omega_0\|_{B^{\frac{d}{2}}_{2,\infty}} + C \int_{0}^{t} e^{-C\int_{0}^{\tau} (|\alpha(r)| + |\beta(r)|) (\|u(r)\|_{B^{1+\frac{d}{2}}_{2,1}} + \|u(r)\|^2_{B^{1+\frac{d}{2}}_{2,1}}) dr} \right. \\
&\quad  \times (|\alpha(\tau)| + |\beta(\tau)| + |\gamma(\tau)| + |\xi(\tau)|) \\
& \quad \times (1+\|u(\tau)\|^2_{B^{1+\frac{d}{2}}_{2,1}} + \|v(\tau)\|^2_{B^{1+\frac{d}{2}}_{2,1}}) \cdot \|\omega(\tau)\|_{B^{\frac{d}{2}}_{2,\infty}}  \\
&\quad \left. \times \log \left(e+ \frac{\|\omega(\tau)\|_{B^{\frac{d}{2} + \epsilon}_{2,\infty}}}{e^{-C\int_{0}^{\tau} (|\alpha(r)| + |\beta(r)|) (\|u(r)\|_{B^{1+\frac{d}{2}}_{2,1}} + \|u(r)\|^2_{B^{1+\frac{d}{2}}_{2,1}}) dr}\|\omega(\tau)\|_{B^{\frac{d}{2}}_{2,\infty}}} \right) d\tau \right].
\end{aligned}
\end{equation}
Setting
\begin{equation*}
\begin{aligned}
S(t) = e^{-C\int_{0}^{t} (|\alpha(\tau)| + |\beta(\tau)|) (\|u(\tau)\|_{B^{1+\frac{d}{2}}_{2,1}} + \|u(\tau)\|^2_{B^{1+\frac{d}{2}}_{2,1}}) d\tau}\|\omega(t)\|_{B^{\frac{d}{2}}_{2,\infty}}.
\end{aligned}
\end{equation*}
Due to the uniform bound \eqref{c-uniform-bound} and the Fatou-type lemma, we have
\begin{equation*}
\begin{aligned}
\|u(t)\|_{B^{1+\frac{d}{2}}_{2,1}} + \|v(t)\|_{B^{1+\frac{d}{2}}_{2,1}} & \leq  2Ch(1 + \|u_0\|_{B^{1+\frac{d}{2}}_{2,1}}) + 2Ch(1 + \|v_0\|_{B^{1+\frac{d}{2}}_{2,1}})\\
&\leq 4Ch(1 + \|u_0\|_{B^{1+\frac{d}{2}}_{2,1}} + \|v_0\|_{B^{1+\frac{d}{2}}_{2,1}}) \\
&\leq 4Ch(1 + 1+ \|u_0\|^2_{B^{1+\frac{d}{2}}_{2,1}} + 1 + \|v_0\|^2_{B^{1+\frac{d}{2}}_{2,1}}) \\
& \leq 4Ch(2+\mathscr{A}_0),
\end{aligned}
\end{equation*}
where the increasing function $h(x)$ is defined in \eqref{SCNS-t20}.

From the definition of $\mathscr{A}(t)$ and $S(t)$ we have
$$
\mathscr{A}(t) = 1 + \| u \|^2_{B^{1+\frac{d}{2}}_{2,1}} + \| v \|^2_{B^{1+\frac{d}{2}}_{2,1}} \leq 16C^2h^2(2+\mathscr{A}_0),
$$
and
\begin{equation*}
\begin{aligned}
S(t) & \leq \sup_{t \in [0,\infty)} e^{-C\int_{0}^{t} (|\alpha(\tau)| + |\beta(\tau)|) (\|u(\tau)\|_{B^{1+\frac{d}{2}}_{2,1}} + \|u(\tau)\|^2_{B^{1+\frac{d}{2}}_{2,1}}) d\tau}\|\omega(t)\|_{B^{\frac{d}{2}}_{2,\infty}}\\
& \leq \sup_{t \in [0,\infty)}
\|\omega(t)\|_{B^{\frac{d}{2} + \epsilon}_{2,\infty}}\\
& \leq \sup_{t \in [0,\infty)}
(\|u(t)\|_{B^{1+\frac{d}{2}}_{2,\infty}} + \|v(t)\|_{B^{1+\frac{d}{2}}_{2,\infty}}) \leq 4Ch(2+\mathscr{A}_0). \\
\end{aligned}
\end{equation*}
It then follows from the inequality \eqref{SCNS-e3} that
\begin{equation}\label{SCNS-e4}
\begin{aligned}
S(t) \leq S(0) + 16C^2h^2(1+\mathscr{A}_0) \int_{0}^{t} (|\alpha(\tau)| + |\beta(\tau)| + |\gamma(\tau)| + |\xi(\tau)|) F(S(\tau))d\tau,
\end{aligned}
\end{equation}
where
\begin{equation*}
F(x) =
\begin{cases}
x \log \left(e+\frac{4Ch(2+\mathscr{A}_0)}{x}\right), & x \in (0,4Ch(2+\mathscr{A}_0)], \\
0, & x=0. \\
\end{cases}
\end{equation*}
It is easy to verify that $F(x)$ is an Osgood modulus of continuity on $[0,4Ch(2+\mathscr{A}_0)]$, so we get by applying the Osgood lemma (cf. Lemma \ref{Osgood}) to inequality \eqref{SCNS-e4} that
\begin{equation}\label{SCNS-e5}
\begin{aligned}
-\int_{S(t)}^{1} \frac{dr}{F(r)} & + \int_{S(0)}^{1} \frac{dr}{F(r)} = \int_{S(0)}^{S(t)} \frac{dr}{F(r)} \\
& \leq 16C^2h^2(2+\mathscr{A}_0) \int_{0}^{t} (|\alpha(\tau)| + |\beta(\tau)| + |\gamma(\tau)| + |\xi(\tau)|)d\tau.
\end{aligned}
\end{equation}
Notice that for any given constant $C>0$, we have the inequality
\begin{equation*}
\begin{aligned}
x \log \left(e + \frac{C}{x}\right) \leq \log(e + C)(1 - \log x), \quad \forall x \in (0,1].
\end{aligned}
\end{equation*}
The left hand side of \eqref{SCNS-e5} can be estimated as
\begin{equation*}
\begin{aligned}
\int_{S(0)}^{S(t)} \frac{dr}{F(r)} & = \int_{S(0)}^{S(t)} \frac{dr}{r \log \left(e + \frac{1}{\frac{r}{4Ch(2+\mathscr{A}_0)}}\right)} \\
& \geq \int_{S(0)}^{S(t)} \frac{d \left(\frac{r}{4Ch(2+\mathscr{A}_0)}\right)}{\log(e+1)\frac{r}{4Ch(2+\mathscr{A}_0)} \left(1 - \log \frac{r}{4Ch(2+\mathscr{A}_0)} \right) } \\
& = -\frac{1}{\log(e+1)} \log \left( \frac{1 - \log \frac{S(t)}{4Ch(2+\mathscr{A}_0)} }{1 - \log \frac{S(0)}{4Ch(2+\mathscr{A}_0)}}\right),
\end{aligned}
\end{equation*}
which combined with \eqref{SCNS-e5} yield that
\begin{equation}\label{SCNS-e6}
\begin{aligned}
\log \left( \frac{1 - \log \frac{S(t)}{4Ch(2+\mathscr{A}_0)}}{1 - \log \frac{S(0)}{4Ch(2+\mathscr{A}_0)}} \right) & \leq 16 \log(e+1) C^2 h^2 (2+\mathscr{A}_0) \\
& \quad \times \int_{0}^{t} (|\alpha(\tau)| + |\beta(\tau)| + |\gamma(\tau)| + |\xi(\tau)|) d\tau.
\end{aligned}
\end{equation}
Solving the inequality \eqref{SCNS-e6} leads to
\begin{equation*}
\begin{aligned}
\frac{S(t)}{4eCh(2+\mathscr{A}_0)} \leq \left(\frac{S(0)}{4eCh(2+\mathscr{A}_0)}\right)^{\exp \{-16C^2 \log(e+1) h^2 (2+\mathscr{A}_0) \int_{0}^{t} (|\alpha(\tau)| + |\beta(\tau)| + |\gamma(\tau)| + |\xi(\tau)|) d\tau\}}.
\end{aligned}
\end{equation*}
From the definition of $S(t)$, the previous inequality implies the desired inequality. This completes the proof of Lemma \ref{lem:d1}.

\end{proof}
\begin{proof}[\textbf{\emph{Proof of Theorem \ref{thm1} (Uniqueness)}.}]
Assume that $u$ and $v$ are solutions to the system \eqref{SCNS-t1} with respect to the some initial data $u_0$, $v_0$. In this case we have $\omega_0 = u_0 - v_0 \equiv 0$, and it follows from \eqref{SCNS-e7} that $\|u - v\|_{B^{\frac{d}{2}}_{2,\infty}} \equiv 0 $, which implies that $u \equiv v$, and this proves the uniqueness part.
\end{proof}

\subsection{Continuity}
Note that the proof of continuity boils down to proving that, for any given initial data $u_0 \in B^{1+\frac{d}{2}}_{2,1}(\mathbb{R}^d)$, one can find a $T > 0$ and a neighborhood $\mathscr U$ of $u_0$ in $B^{1+\frac{d}{2}}_{2,1}(\mathbb{R}^d)$ such that the map
\begin{equation*}
\Lambda :
\begin{cases}
\mathscr U \subset B^{1+\frac{d}{2}}_{2,1}(\mathbb{R}^d) \to C([0,T];B^{1+\frac{d}{2}}_{2,1}(\mathbb{R}^d)), \\
u_0 \to u \text{ solution to system } \eqref{SCNS-t1} \text{ with initial data } u_0,\\
\end{cases}
\end{equation*}
is continuous. The proof of this conclusion will be divided into two steps.

{\bfseries Step 1:} Continuity in $C([0,T];B^{\frac{d}{2}}_{2,1}(\mathbb{R}^d))$.
Let us fix a $r > 0$ and a $u_0 \in B^{1+\frac{d}{2}}_{2,1}(\mathbb{R}^d)$. Define the closed bounded ball $B_r(u_0) \subset B^{1+\frac{d}{2}}_{2,1}(\mathbb{R}^d)$ as
$$
B_r(u_0) \doteq \{ v_0 \in B^{1+\frac{d}{2}}_{2,1}(\mathbb{R}^d); \|u_0 - v_0\|_{B^{1+\frac{d}{2}}_{2,1}} \leq r \}.
$$
We make a {\bfseries Claim: }there is a $T > 0$ and a $M > 0$ such that for any $v_0 \in B_r(u_0)$, the solution $v = \Lambda(v_0)$ to the system \eqref{SCNS-t1} belongs to $C([0,T];B^{1+\frac{d}{2}}_{2,1}(\mathbb{R}^d))$ and satisfies
\begin{equation}\label{SCNS-e8}
\begin{aligned}
\sup_{t \in [0,T]} \|v(t)\|_{B^{1+\frac{d}{2}}_{2,1}} \leq M.
\end{aligned}
\end{equation}
Indeed, it follows from the uniform bound that, for any $v_0 \in \partial B_r(u_0)$, i.e., $\|v_0\|_{B^{1+\frac{d}{2}}_{2,1}} = \|u_0\|_{B^{1+\frac{d}{2}}_{2,1}} + r$, one can find a lifespan $T^*$ satisfying
$$
\int_0^{T^*} (|\alpha(\tau)| + |\beta(\tau)| + |\gamma(\tau)| + |\xi(\tau)|) d\tau  \leq \frac{\ln 2}{12C^3 h^2(1 + \|u_0\|_{B^{1+\frac{d}{2}}_{2,1}} + r)},
$$
where $h(x)$ is a modulus of continuity defined in \eqref{SCNS-t20}. Notice that the lifespan $T^*$ is a decreasing function with respect to the norm of the initial data $u_0$. Thereby for any solution $v$ to the system \eqref{SCNS-t1} associated with $v_0 \in B_r(u_0)$, one can restrict $v$ on the interval $[0,T^*] \subset [0,\overline{T}]$, and
\begin{equation}\label{SCNS-e9}
\begin{aligned}
\sup_{t \in [0,T^*]} \|v(t)\|_{B^{1+\frac{d}{2}}_{2,1}} & \leq \sup_{t \in [0,\overline{T}]} \|v(t)\|_{B^{1+\frac{d}{2}}_{2,1}} \\
& \leq 2Ch(1+\|v_0\|_{B^{1+\frac{d}{2}}_{2,1}}) \\
& \leq 2Ch(1+\|u_0\|_{B^{1+\frac{d}{2}}_{2,1}} + r) \doteq M. \\
\end{aligned}
\end{equation}
Then we prove the Claim by choosing $T = T^*$.

Combining the above conclusion with Lemma \ref{lem:d1}, we obtain
\begin{equation}\label{SCNS-e10}
\begin{aligned}
\|\Lambda(u_0) - \Lambda(v_0)\|_{B^{\frac{d}{2}}_{2,\infty}} & = \|(u - v)(t)\|_{B^{\frac{d}{2}}_{2,\infty}} \\
& \leq Ch(3 + 2M^2) e^{Ch^2(3+2M^2)} \times \left(\frac{\|u_0 - v_0\|_{B^{\frac{d}{2}}_{2,\infty}}}{Ch(3 + 2M^2)}\right)^{e^{-Ch^2(3+2M^2)}}.
\end{aligned}
\end{equation}
By using the real interpolation inequality in Lemma \ref{lem:RI}, we deduce from \eqref{SCNS-e9} - \eqref{SCNS-e10} and the embedding $B^{\frac{d}{2} + \theta}_{2,1}(\mathbb{R}^d) \subset B^{\frac{d}{2}}_{2,1}(\mathbb{R}^d)$ for any $\theta \in (0,1)$ that
\begin{equation*}
\begin{aligned}
\|\Lambda(u_0) - \Lambda(v_0)\|_{B^{\frac{d}{2}}_{2,1}} & \leq \|\Lambda(u_0) - \Lambda(v_0)\|_{B^{\frac{d}{2} + \theta}_{2,1}} \\
& \leq C \|\Lambda(u_0) - \Lambda(v_0)\|^{1-\theta}_{B^{\frac{d}{2}}_{2,\infty}} \|\Lambda(u_0) - \Lambda(v_0)\|^{\theta}_{B^{1+\frac{d}{2}}_{2,\infty}} \\
& \leq Ch^{1- \theta}(3+2M^2) e^{C(1-\theta) h^2(3+2M^2)} \\
& \quad \times \left(\frac{\|u_0 - v_0\|_{B^{\frac{d}{2}}_{2,\infty}}}{Ch(3+2M^2)}\right)^{e^{-C(1-\theta) h^2(3+2M^2)}} \times (\|u_0\|_{B^{1+\frac{d}{2}}_{2,1}} + \|v_0\|_{B^{1+\frac{d}{2}}_{2,1}})^{\theta} \\
& \leq CM^{\theta} h^{1-\theta}(3 + 2M^2) e^{C(1-\theta) h^2(3+2M^2)} \times \left(\frac{\|u_0 - v_0\|_{B^{\frac{d}{2}}_{2,\infty}}}{Ch(3+2M^2)}\right)^{e^{-C(1-\theta) h^2(3+2M^2)}},
\end{aligned}
\end{equation*}
which implies that the map $\Lambda(\cdot): B^{1+\frac{d}{2}}_{2,1} \to C([0,T];B^{\frac{d}{2}}_{2,1})$ is H\"older continuous.

{\bfseries Step 2:} Continuity in $C([0,T];B^{1+\frac{d}{2}}_{2,1}(\mathbb{R}^d))$.
Denote by $u^n$ the solution corresponding to the initial data $u_0^n \in B^{1+\frac{d}{2}}_{2,1}(\mathbb{R}^d)$, for all $n \geq 1$. Assume that the sequence $(u^n_0)_{n \geq 1}$ converges to a function $u_0^{\infty}$ strongly in $B^{1+\frac{d}{2}}_{2,1}(\mathbb{R}^d)$ as $n \to \infty$. According to the first step, one can find a $T>0$ and a $M>0$ such that for all $n \geq 1$, $u^n$ is defined on $[0,T]$ and
\begin{equation}\label{SCNS-e11}
\begin{aligned}
\sup_{n \in \mathbb{N^+} \cup {\{ \infty \}}} \|u^n\|_{L^{\infty}([0,T];B^{1+\frac{d}{2}}_{2,1})} \leq M.
\end{aligned}
\end{equation}
Observing that in order to prove the desired result, it is sufficient to prove that $\nabla u^n$ tends to $\nabla u^{\infty}$ in $C([0,T];B^{\frac{d}{2}}_{2,1}(\mathbb{R}^d))$. Differentiating the system \eqref{SCNS-d1} with respect to $x$, we find that for each $i=1,2,...,d$, the $i$-th component $\nabla_i u^n$ of $\nabla u^n$ satisfies
\begin{equation*}
\left\{
\begin{aligned}
& \partial_t \nabla_i u^n + (\alpha(t) + \beta(t) u^n) \nabla S^n \cdot \nabla \nabla_i u^n = \nabla_i f^n(t,x), \\
& \nabla_i u^n|_{t=0} = \nabla_i u_0^n,
\end{aligned}
\right.
\end{equation*}
with $f^n(t,x) := -(\alpha(t) + \beta(t) u^n) \nabla_j S^n \nabla_j u^n -(\gamma(t) u^n + \xi(t) (u^n)^2) \Delta S^n$. Following the decomposition method by Kato (cf. \cite{Kato1975}), we decompose $u^n$ into $u^n = y^n + z^n$ with
\begin{equation*}
\left\{
\begin{aligned}
& \partial_t \nabla_i y^n + (\alpha(t) + \beta(t) u^n) \nabla S^n \cdot \nabla \nabla_i y^n = \nabla_i f^n - \nabla_i f^{\infty}, \\
& \nabla_i y^n|_{t=0} = \nabla_i u_0^n - \nabla_i u_0^{\infty},
\end{aligned}
\right.
\end{equation*}
and
\begin{equation}\label{SCNS-e12}
\left\{
\begin{aligned}
& \partial_t \nabla_i z^n + (\alpha(t) + \beta(t) u^n) \nabla S^n \cdot \nabla \nabla_i z^n = \nabla_i f^{\infty}, \\
& \nabla_i z^n|_{t=0} = \nabla_i u_0^{\infty}.
\end{aligned}
\right.
\end{equation}
Thanks to the basic properties of Besov spaces, one can easily check that $(f^n)_{n \in \{ \mathbb{N^+},\infty\}}$ is uniformly bounded in $C([0,T];B^{\frac{d}{2}}_{2,1}(\mathbb{R}^d))$. Moreover, there holds
\begin{equation*}
\begin{aligned}
\nabla_i f^n - \nabla_i f^{\infty} & = \nabla_i [(\alpha(t) + \beta(t) u^n) \nabla_j S^n] \nabla_j (u^n - u^{\infty}) \\
& + \nabla_j u^{\infty} \nabla_i [(\alpha(t) + \beta(t) u^n) \nabla_j (S^n - S^{\infty}) + \beta(t) (u^n - u^{\infty}) \nabla_j S^{\infty}] \\
& + \nabla_i [(\gamma(t) u^n + \xi(t) (u^n)^2) \Delta(S^n -S^{\infty})] \\
& + \nabla_i[\gamma(t) (u^n - u^{\infty}) \Delta S^{\infty} + \xi(t) (u^n + u^{\infty}) (u^n - u^{\infty}) \Delta S^{\infty}].
\end{aligned}
\end{equation*}
Then,
\begin{equation*}
\begin{aligned}
\| \nabla_i f^n - \nabla_i f^{\infty} \|_{B^{\frac{d}{2}}_{2,1}} & \leq (|\alpha(t)| + |\beta(t)| + |\gamma(t)| + |\xi(t)|) \\
& \times (\| u^n \|_{B^{1+\frac{d}{2}}_{2,1}} + \| u^n \|^2_{B^{1+\frac{d}{2}}_{2,1}} + \| u^{\infty} \|_{B^{1+\frac{d}{2}}_{2,1}} + \| u^{\infty} \|^2_{B^{1+\frac{d}{2}}_{2,1}}) \\
& \times (\| u^n -u^{\infty} \|_{B^{\frac{d}{2}}_{2,1}} + \| \nabla_j (u^n - u^{\infty}) \|_{B^{\frac{d}{2}}_{2,1}}).
\end{aligned}
\end{equation*}
Therefore, the product law in Besov spaces $B^{\frac{d}{2}}_{2,1}(\mathbb{R}^d)$ combined with the prior estimate in Lemma \ref{prior estimates} (in the case $r = 1$) lead to
\begin{equation*}
\begin{aligned}
\| \nabla_i y^n(t) \|_{B^{\frac{d}{2}}_{2,1}} & \leq e^{C \int_{0}^{t} \| \nabla[(\alpha(\tau) + \beta(\tau) u^n) \nabla S^n] \|_{B^{\frac{d}{2}}_{2,\infty}\cap L^{\infty}} d\tau } \times \| \partial_i u_0^n - \partial_i u_0^{\infty} \|_{B^{\frac{d}{2}}_{2,1}} \\
& \quad + C \int_{0}^{t} e^{C \int_{\tau}^{t} \| \nabla[(\alpha(r) + \beta(r) u^n) \nabla S^n] \|_{B^{\frac{d}{2}}_{2,\infty}\cap L^{\infty}} dr }  \\
& \quad \times (|\alpha(t)| + |\beta(t)| + |\gamma(t)| + |\xi(t)|) \\
& \quad \times (\| u^n \|_{B^{1+\frac{d}{2}}_{2,1}} + \| u^n \|^2_{B^{1+\frac{d}{2}}_{2,1}} + \| u^{\infty} \|_{B^{1+\frac{d}{2}}_{2,1}} + \| u^{\infty} \|^2_{B^{1+\frac{d}{2}}_{2,1}}) \\
& \quad \times (\| u^n -u^{\infty} \|_{B^{\frac{d}{2}}_{2,1}} + \| \nabla_j (u^n - u^{\infty}) \|_{B^{\frac{d}{2}}_{2,1}}) d\tau.
\end{aligned}
\end{equation*}
Since
\begin{equation*}
\begin{aligned}
\int_{0}^{t} \| \nabla[(\alpha(\tau) + \beta(\tau) u^n) \nabla S^n] \|_{B^{\frac{d}{2}}_{2,\infty}\cap L^{\infty}} d\tau & \leq \int_{0}^{t} \| (\alpha(\tau) + \beta(\tau) u^n) \nabla S^n \|_{B^{1+\frac{d}{2}}_{2,\infty}\cap Lip} d\tau \\
& \leq C \int_{0}^{t} (|\alpha(\tau)| + |\beta(\tau)|)(\| u^n \|_{B^{1+\frac{d}{2}}_{2,1}} + \| u^n \|^2_{B^{1+\frac{d}{2}}_{2,1}}) d\tau \\
& \leq C(M +M^2),
\end{aligned}
\end{equation*}
together with the uniform bound \eqref{SCNS-e11} yield that
\begin{equation}\label{e1}
\begin{aligned}
\|\nabla_i y^n(t)\|_{B^{\frac{d}{2}}_{2,1}}
&\leq C(1 + M + M^2) e^{C(M+M^2)} \Bigg(\|\partial_i u_0^n - \partial_i u_0^{\infty}\|_{B^{\frac{d}{2}}_{2,1}} \\
&\quad + \int_{0}^{t} \Big(|\alpha(\tau)| + |\beta(\tau)| + |\gamma(\tau)| + |\xi(\tau)|\Big) \\
&\quad \times \Big(\|(u^n - u^{\infty})(\tau)\|_{B^{\frac{d}{2}}_{2,1}} + \|\nabla_j (u^n - u^{\infty})(\tau)\|_{B^{\frac{d}{2}}_{2,1}}\Big) d\tau \Bigg).
\end{aligned}
\end{equation}
On the other hand, the field $a^n(t) := (\alpha(t) + \beta(t) u^n) \nabla S^n$ is uniformly bounded in $C([0,T];B^{1+\frac{d}{2}}_{2,1}(\mathbb{R}^d))$. Moreover, observing that
\begin{equation*}
\begin{aligned}
\|(a^n - a^{\infty})(t)\|_{B^{\frac{d}{2}}_{2,1}} & = \|(\alpha(t) + \beta(t) u^n) \nabla S^n - (\alpha(t) + \beta(t) u^{\infty}) \nabla S^{\infty}\|_{B^{\frac{d}{2}}_{2,1}}  \\
& \leq C (\|(\alpha(t) + \beta(t) u^n) \nabla (S^n - S^{\infty}) \|_{B^{\frac{d}{2}}_{2,1}} + |\beta(t)| \cdot \|(u^n - u^{\infty}) \nabla S^{\infty}\|_{B^{\frac{d}{2}}_{2,1}})  \\
& \leq C(|\alpha(t) + |\beta(t)|)( 1+ \| u^n \|_{B^{1+\frac{d}{2}}_{2,1}} + \| u^{\infty} \|_{B^{1+\frac{d}{2}}_{2,1}}) \| u^n - u^{\infty} \|_{B^{\frac{d}{2}}_{2,1}},
\end{aligned}
\end{equation*}
it then follows from the fact of $u^n \to u^{\infty}$ in $C([0,T];B^{\frac{d}{2}}_{2,1}(\mathbb{R}^d))$ (in view of the first step) that $a^n \to a^{\infty}$ in $L^1([0,T];B^{\frac{d}{2}}_{2,1}(\mathbb{R}^d))$ as $n \to \infty$.

To deal with the convergence of the system \eqref{SCNS-e12}, we recall the following useful lemma.
\begin{lemma}[\cite{Danchin2003}]\label{lem:da}
Let $d \in \mathbb{N}^+$. Let $(u^n)_{n \geq 1}$ be a sequence of functions belonging to $C([0,T];B^{1+\frac{d}{2}}_{2,1}(\mathbb{R}^d))$. Assume that $v^n$ solves the following equation
\begin{equation*}
\left\{
\begin{aligned}
& \partial_t u^n + a^n \cdot \nabla u^n = h,\\
& u^n(0,x) = u_0(x),
\end{aligned}
\right.
\end{equation*}
with $u_0 \in B^{\frac{d}{2}}_{2,1}(\mathbb{R}^d)$, $h \in L^1([0,T];B^{\frac{d}{2}}_{2,1}(\mathbb{R}^d))$ and $\sup_{n \in \mathbb{N^+}} \|a^n\|_{B^{1+\frac{d}{2}}_{2,1}} \leq \gamma(t)$, for some $\gamma \in L^1(0,T)$. If in addition $a^n$ tends to $a^{\infty}$ in $L^1([0,T];B^{\frac{d}{2}}_{2,1}(\mathbb{R}^d))$, then $u^n$ tends to $u^{\infty}$ in $C([0,T];B^{1+\frac{d}{2}}_{2,1}(\mathbb{R}^d))$.
\end{lemma}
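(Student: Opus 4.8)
The plan is to read Lemma \ref{lem:da} as a continuity statement for the solution operator of the linear transport equation $\partial_t u+a\cdot\nabla u=h$ with respect to the convecting field, the datum $u_0$ and the source $h$ being frozen. Everything rests on the transport estimates of Lemma \ref{prior estimates}, which are available at the relevant regularity precisely because the uniform control $\sup_n\|a^n\|_{B^{1+\frac{d}{2}}_{2,1}}\le\gamma(t)\in L^1(0,T)$ forces $a^n\in L^1([0,T];\mathrm{Lip})$ through the embedding $B^{1+\frac{d}{2}}_{2,1}\hookrightarrow\mathrm{Lip}$. First I would produce the limiting object: passing to the limit in the bound on $a^n$ via the Fatou-type Lemma \ref{lem:FT} gives $\|a^\infty(t)\|_{B^{1+\frac{d}{2}}_{2,1}}\le\gamma(t)$, so Lemma \ref{euth} furnishes a solution $u^\infty$ driven by $a^\infty$ (with the same $u_0$ and $h$) in the same class, and a Gr\"onwall argument based on Lemma \ref{prior estimates} yields the uniform bound $\sup_n\|u^n\|_{L^\infty([0,T];B^{1+\frac{d}{2}}_{2,1})}\le Ce^{C\|\gamma\|_{L^1}}(1+\|u_0\|_{B^{1+\frac{d}{2}}_{2,1}}+\|h\|_{L^1_TB^{1+\frac{d}{2}}_{2,1}})<\infty$.

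Second, I would write the equation for the difference $\omega^n:=u^n-u^\infty$, namely $\partial_t\omega^n+a^n\cdot\nabla\omega^n=(a^\infty-a^n)\cdot\nabla u^\infty$ with $\omega^n|_{t=0}=0$. Because $B^{\frac{d}{2}}_{2,1}$ is a Banach algebra and $u^\infty\in L^\infty_TB^{1+\frac{d}{2}}_{2,1}$, the forcing obeys $\|(a^\infty-a^n)\cdot\nabla u^\infty\|_{L^1_TB^{\frac{d}{2}}_{2,1}}\le C\|a^\infty-a^n\|_{L^1_TB^{\frac{d}{2}}_{2,1}}\,\|u^\infty\|_{L^\infty_TB^{1+\frac{d}{2}}_{2,1}}\to0$. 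Feeding this into Lemma \ref{prior estimates} at the \emph{admissible} lower index $B^{\frac{d}{2}}_{2,1}$ (legitimate since the field is merely Lipschitz) gives $\|\omega^n\|_{L^\infty_TB^{\frac{d}{2}}_{2,1}}\le Ce^{C\|\gamma\|_{L^1}}\|(a^\infty-a^n)\cdot\nabla u^\infty\|_{L^1_TB^{\frac{d}{2}}_{2,1}}\to0$. This already yields the convergence $u^n\to u^\infty$ in $C([0,T];B^{\frac{d}{2}}_{2,1})$, and by interpolating against the uniform $B^{1+\frac{d}{2}}_{2,1}$ bound (Lemma \ref{lem:RI}) one upgrades it to $C([0,T];B^{s}_{2,1})$ for every $s<1+\frac{d}{2}$.

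To reach the endpoint $B^{1+\frac{d}{2}}_{2,1}$ with summation index $r=1$ I would split $\omega^n=S_N\omega^n+(\mathrm{Id}-S_N)\omega^n$. Bernstein's inequality controls the low modes by $\|S_N\omega^n\|_{B^{1+\frac{d}{2}}_{2,1}}\le C2^{N}\|\omega^n\|_{B^{\frac{d}{2}}_{2,1}}$, which vanishes as $n\to\infty$ for each fixed $N$. For the high-frequency tails I would use the $r=1$ summability: for $u^\infty$ the tails $\|(\mathrm{Id}-S_N)u^\infty(t)\|_{B^{1+\frac{d}{2}}_{2,1}}$ decrease monotonically to zero and are continuous in $t$, so Dini's theorem gives uniform smallness on $[0,T]$; for the sequence, I would split the data and source as $u_0=S_mu_0+(\mathrm{Id}-S_m)u_0$ and $h=S_mh+(\mathrm{Id}-S_m)h$ and use linearity, the transport estimate at $B^{1+\frac{d}{2}}_{2,1}$ with the common bound $\gamma$ transferring the $r=1$-smallness of $(\mathrm{Id}-S_m)u_0$ and $(\mathrm{Id}-S_m)h$ into a contribution small uniformly in $n$. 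A final $\varepsilon$-argument (fix $m$, then $N$, then $n$) closes the estimate.

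The main obstacle is exactly this last step: the forcing $(a^\infty-a^n)\cdot\nabla u^\infty$ tends to zero only in the weaker norm, and the one-derivative gap between the mode of convergence of the field ($B^{\frac{d}{2}}_{2,1}$) and its uniform bound ($B^{1+\frac{d}{2}}_{2,1}$) cannot be closed by any direct product estimate, since the product of a factor converging in $B^{\frac{d}{2}}_{2,1}$ with $\nabla u^\infty\in B^{\frac{d}{2}}_{2,1}$ lands only in $B^{\frac{d}{2}}_{2,1}$. Thus the whole difficulty is concentrated in establishing the \emph{equicontinuity of the Littlewood--Paley tails at the critical index}; the hypotheses $r=1$ (so that tails of $u_0$ and $h$ are genuinely summable) and the uniform bound by $\gamma\in L^1$ (so that the transport estimate propagates this smallness uniformly in $n$) are precisely what make this possible, and this is the crux of the argument.
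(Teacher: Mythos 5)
You are trying to prove more than is true, and the step you yourself flag as the crux is exactly where the statement breaks down. First, the smaller issue: under the hypotheses as written ($u_0\in B^{\frac d2}_{2,1}$, $h\in L^1([0,T];B^{\frac d2}_{2,1})$), the quantities $\|u_0\|_{B^{1+\frac d2}_{2,1}}$ and $\|h\|_{L^1_T B^{1+\frac d2}_{2,1}}$ invoked in your uniform bound, and the norm $\|u^\infty\|_{L^\infty_T B^{1+\frac d2}_{2,1}}$ on which your Step-2 product estimate rests, need not be finite: Lemma \ref{euth} applied with these data only yields $u^\infty\in C([0,T];B^{\frac d2}_{2,1})$, so even the ``easy'' convergence in $C([0,T];B^{\frac d2}_{2,1})$ is circular as written. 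The standard repair (this is Danchin's actual argument) is to split the \emph{data}, not the solution: for fixed $m$, the solutions with data $S_m u_0$, $S_m h$ and fields $a^n$ do lie in $B^{1+\frac d2}_{2,1}$ uniformly in $n$ (a one-derivative gain, which is the most a field bounded in $B^{1+\frac d2}_{2,1}$ can propagate), your difference argument applies to them at the level $B^{\frac d2}_{2,1}$, and the $(\mathrm{Id}-S_m)$ parts are small in $B^{\frac d2}_{2,1}$ uniformly in $n$ by the $r=1$ a priori estimate; an $\varepsilon$-argument then gives convergence in $C([0,T];B^{\frac d2}_{2,1})$.

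Second, the fatal issue: the endpoint convergence in $C([0,T];B^{1+\frac d2}_{2,1})$ that your Step 3 aims at is false under these hypotheses, so the ``equicontinuity of the Littlewood--Paley tails'' you appeal to cannot be established by any argument. The obstruction is that the fields are only bounded in $B^{1+\frac d2}_{2,1}$ and converge one derivative lower; Lemma \ref{prior estimates} cannot be applied at any index $s>1+\frac d2$ (it would require $\nabla a^n\in L^1_T B^{s-1}_{2,1}$), so even the mollified-data solutions acquire $O(1)$ high-frequency content at the level $1+\frac d2$, and their tails are not small uniformly in $n$. A concrete counterexample in $d=1$: take $a^n(x)=2^{-n}A(2^nx)$ with $A\in C^\infty_c$, $h=0$, and $u_0\in C^\infty_c$ with $u_0'(0)\neq 0$. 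Then $\sup_n\|a^n\|_{B^{3/2}_{2,1}}<\infty$ and $a^n\to 0$ in $L^1_T B^{1/2}_{2,1}$, so $u^\infty=u_0$; but the backward flow of $a^n$ is $Y^n_t(x)=2^{-n}\Phi_{-t}(2^nx)$ ($\Phi$ the flow of $A$), whence $\partial_x u^n(t,x)=u_0'(Y^n_t(x))\,\partial_y\Phi_{-t}(2^nx)$ differs from $u_0'$ by a bump of amplitude $O(t)$ localized at scale $2^{-n}$, whose $B^{1/2}_{2,1}$-norm stays bounded below; hence $u^n\not\to u_0$ in $C([0,T];B^{3/2}_{2,1})$, although $u^n\to u_0$ in $C([0,T];B^{1/2}_{2,1})$. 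This is consistent with the way the lemma is actually used in the paper, where the conclusion drawn is only that $\nabla_i z^n\to\nabla_i u^{\infty}$ in $C([0,T];B^{\frac d2}_{2,1})$, i.e. in the space of the data: the exponent $1+\frac d2$ in the displayed conclusion is a misprint of the cited result from \cite{Danchin2003}. Your ingredients are the right ones, but assembled this way they attempt to prove the printed (false) statement rather than the corrected one.
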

By applying Lemma \ref{lem:da}, we infer that $\nabla_i z^n$ tends to $\nabla_i u^{\infty}$ in $C([0,T];B^{\frac{d}{2}}_{2,1}(\mathbb{R}^d))$ as $n \to \infty$. Therefore, for any $\epsilon >0$, we have for large enough $n \geq 1$ that
\[
\sup_{t \in [0,T]} \|(\nabla_i z^n - \nabla_i u^{\infty})(t)\|_{B^{\frac{d}{2}}_{2,1}} < \epsilon.
\]
Combining above inequality with estimates \eqref{SCNS-e11} and \eqref{e1} as well as the fact of $y^n = u^n - z^n$, we get for large enough $n \geq 1$ that
\begin{equation*}
\begin{aligned}
\|(\nabla_i u^n - \nabla_i u^{\infty})(t)\|_{B^{\frac{d}{2}}_{2,1}} & = \|(\nabla_i y^n + \nabla_i z^n - \nabla_i u^{\infty})(t)\|_{B^{\frac{d}{2}}_{2,1}} \\
& \leq C(\|(\nabla_i z^n - \nabla_i u^{\infty})(t)\|_{B^{\frac{d}{2}}_{2,1}} + \|\nabla_i y^n(t)\|_{B^{\frac{d}{2}}_{2,1}}) \\
& \leq \epsilon +C(1+M+M^2) e^{C(M+M^2)} \bigg(\| \partial_i u_0^n - \partial_i u_0^{\infty} \|_{B^{\frac{d}{2}}_{2,1}} \\
& \quad + \int_{0}^{t} (|\alpha(\tau)| + |\beta(\tau)| + |\gamma(\tau)| + |\xi(\tau)|) \\
& \quad \times (\| (u^n -u^{\infty}) (\tau)\|_{B^{\frac{d}{2}}_{2,1}} + \| \nabla_j (u^n - u^{\infty})(\tau) \|_{B^{\frac{d}{2}}_{2,1}}) d\tau \bigg) \\
& \leq \epsilon + \epsilon C(1+M+M^2)e^{C(M+M^2)}  + C(1+M+M^2)e^{C(M+M^2)} \\
& \quad \times \int_{0}^{t} (|\alpha(\tau)| + |\beta(\tau)| + |\gamma(\tau)| + |\xi(\tau)|) (\| \nabla_j (u^n - u^{\infty})(\tau) \|_{B^{\frac{d}{2}}_{2,1}}) d\tau,
\end{aligned}
\end{equation*}
where the last inequality used the assumption for $u_0^n$ and the fact that $u^n$ tends to $u^{\infty}$ in $C([0,T];B^{\frac{d}{2}}_{2,1}(\mathbb{R}^d))$ as $n \to  \infty$. By applying the Gronwall inequality, we get
\begin{equation*}
\begin{aligned}
\|(\nabla_i u^n - \nabla_i u^{\infty})(t)\|_{L^{\infty}([0,T];B^{\frac{d}{2}}_{2,1})}
\leq C(M)\left(\epsilon + \|\nabla_i u_0^n - \nabla_i u_0^{\infty}\|_{B^{\frac{d}{2}}_{2,1}}\right),
\end{aligned}
\end{equation*}
for $i = 1,2,...,d$, and some positive constant C(M) depending only on M, which implies the continuity of the data-to-solution map in $C([0,T];B^{1+\frac{d}{2}}_{2,1}(\mathbb{R}^d))$.

Combining the first step and second step, we completed the proof of Theorem \ref{thm1}.

\section{Blow-up phenomena}
By means of the Littlewood-Paley decomposition theory, we shall prove that the solutions to \eqref{SCNS-t1} blows up at finite time in the Besov spaces $B^{1+\frac{d}{2}}_{2,1} (\mathbb{R}^d) $ and $B^s_{p,r}(\mathbb{R}^d) (s > 1+\frac{d}{p}, 1 \leq p < \infty, 1 \leq r \leq \infty)$, respectively. Note that in both cases, the existence and uniqueness of solutions are ensured by Theorem \ref{thm} and Theorem \ref{thm1} in this work.

\begin{proof}[\textbf{\emph{Proof of Theorem \ref{blow-up_1}.}}]
Suppose that $u$ is the solution to \eqref{SCNS-t1} with initial data $u_0 \in B^{1+\frac{d}{2}}_{2,1} (\mathbb{R}^d)$, and let $T^* > 0$ be the maximal existence time of the solution $u$.
Applying the Littlewood-Paley blocks $\Delta_q$ to both sides of the first equation in \eqref{SCNS-t1}, after rearranging the terms we infer that
\begin{equation*}
\begin{aligned}
\partial_t & \Delta_q u + (\alpha(t) + \beta(t) u) \nabla S \cdot \nabla \Delta_q u \\
& = (\alpha(t) + \beta(t) u) \nabla S \cdot \nabla \Delta_q u - \Delta_q [(\alpha(t) + \beta(t) u) \nabla S \cdot \nabla u] - \Delta_q [(\gamma(t)u + \xi(t) u^2) \Delta S].
\end{aligned}
\end{equation*}

Multiplying both sides by $2\Delta_q u$ and then integrating the resulted equality with respect to space variable, we get
\begin{equation}\label{b1}
\begin{aligned}
\frac{d}{dt} \|\Delta_q u\|^2_{L^2} & = - 2\int_{\mathbb{R}^d} \Delta_q u \cdot (\alpha(t) + \beta(t) u) \nabla S \cdot \nabla \Delta_q u dx \\
& \quad + 2 \int_{\mathbb{R}^d} \Delta_q u [(\alpha(t) + \beta(t) u) \nabla S, \Delta_q] \cdot \nabla u dx \\
& \quad - 2 \int_{\mathbb{R}^d} \Delta_q u \Delta_q [(\gamma(t)u + \xi(t) u^2) \Delta S] dx \\
& := T_1(t) + T_2(t) + T_3(t),
\end{aligned}
\end{equation}
where the commutator term in $T_2(t)$ is defined by
\[
[(\alpha(t) + \beta(t) u) \nabla S \cdot \nabla, \Delta_q] u = (\alpha(t) + \beta(t) u) \nabla S \cdot \nabla \Delta_q u - \Delta_q [(\alpha(t) + \beta(t) u) \nabla S \cdot \nabla u].
\]
By integrating by parts the term $T_1$ can be estimated by
\begin{equation*}
\begin{aligned}
T_1(t) & = - 2 \int_{\mathbb{R}^d} \Delta_q u \cdot (\alpha(t) + \beta(t) u) \nabla S \cdot \nabla \Delta_q u dx \\
& = - \int_{\mathbb{R}^d} (\alpha(t) + \beta(t) u) \nabla S \cdot \nabla (|\Delta_q u|^{2})dx \\
& = \int_{\mathbb{R}^d} |\Delta_q u|^{2} \div[(\alpha(t) + \beta(t) u) \nabla S]dx \\
& \leq \| \div[(\alpha(t) + \beta(t) u) \nabla S] \|_{L^{\infty}} \| \Delta_q u \|^2_{L^2}.
\end{aligned}
\end{equation*}
By using the H\"older inequality, we also have
\[
T_2(t) \leq C \| \Delta_q u \|_{L^2} \| [(\alpha(t) + \beta(t) u) \nabla S \cdot \nabla, \Delta_q]u \|_{L^2},
\]and
\[
T_3(t) \leq C \| \Delta_q u \|_{L^2} \| \Delta_q [(\gamma(t)u + \xi(t) u^2) \Delta S] \|_{L^2}.
\]
It then follows from \eqref{b1} that
\begin{equation*}
\begin{aligned}
\frac{d}{dt} \|\Delta_q u\|_{L^2} & \leq C \| [(\alpha(t) + \beta(t) u) \nabla S \cdot \nabla, \Delta_q]u \|_{L^2} + C \| \Delta_q [(\gamma(t)u + \xi(t) u^2) \Delta S] \|_{L^2} \\
& \quad + \| \div[(\alpha(t) + \beta(t) u) \nabla S] \|_{L^{\infty}} \| \Delta_q u \|_{L^2}.
\end{aligned}
\end{equation*}

Multiplying the both sides of last inequality by $2^{q(1+\frac{d}{2})}$ and then taking the $l^1$-norm with respect to $q \geq -1$, we get from the commutator estimate (cf. Lemma \ref{ce}) and Moser-type estimates (cf. Lemma \ref{me}) in Besov spaces and the basic properties for $B^{\frac{d}{2}}_{2,1}(\mathbb{R}^d)$ that
\begin{equation}
\begin{aligned}
\frac{d}{dt} \| u(t) \|_{B^{1+\frac{d}{2}}_{2,1}}
&\leq C \left\| \left\{2^{q(1+\frac{d}{2})} \left\| \left[(\alpha(t) + \beta(t) u) \nabla S \cdot \nabla, \Delta_q\right] u \right\|_{L^2} \right\} \right\|_{l^1}
+ C \left\| (\gamma(t)u + \xi(t) u^2) \Delta S \right\|_{B^{1+\frac{d}{2}}_{2,1}} \\
&\quad + \left\| \operatorname{div}\left[(\alpha(t) + \beta(t) u) \nabla S\right] \right\|_{L^{\infty}} \| u \|_{B^{1+\frac{d}{2}}_{2,1}} \\
&\leq C (|\alpha(t)| + |\beta(t)|) \left( \left\| \nabla ((1 + u) \nabla S) \right\|_{L^{\infty}} \| u \|_{B^{1+\frac{d}{2}}_{2,1}}
+ C \| \nabla u\|_{L^{\infty}} (1 + \| u\|_{L^{\infty}}) \| \nabla S \|_{B^{\frac{d}{2}}_{2,1}} \right. \\
&\quad + \| \nabla u \|_{L^{\infty}} \| \Delta S \|_{B^{\frac{d}{2}}_{2,1}}
+ \| \nabla u \|_{L^{\infty}} \| \nabla S \|_{B^{\frac{d}{2}}_{2,1}}
+ \| \nabla S \|_{L^{\infty}} \| \nabla u \|_{B^{\frac{d}{2}}_{2,1}} \\
&\quad + \left. \| \Delta S \|_{L^{\infty}} \| u \|_{B^{1+\frac{d}{2}}_{2,1}} \right)
+ C(|\gamma(t)| + |\xi(t)|) \left( (1+\|u\|_{L^{\infty}})\| u \|_{B^{1+\frac{d}{2}}_{2,1}} \| \Delta S \|_{L^{\infty}} \right. \\
&\quad + \left. \| u + u^2 \|_{L^{\infty}} \| \Delta S \|_{B^{1+\frac{d}{2}}_{2,1}} \right)
+ C (|\alpha(t)| + |\beta(t)|) \left\| \operatorname{div}\left[(1 + u) \nabla S\right] \right\|_{L^{\infty}} \| u \|_{B^{1+\frac{d}{2}}_{2,1}} \\
& \leq C(|\alpha(t)| + |\beta(t)| + |\gamma(t)| + |\xi(t)|) \\
& \quad \times (1 + \| \nabla S \|^2_{L^{\infty}} + \| u \|^2_{L^{\infty}} + \| \Delta S \|^2_{L^{\infty}} + \| \nabla u \|^2_{L^{\infty}}) \| u \|_{B^{1+\frac{d}{2}}_{2,1}}, \notag
\end{aligned}
\end{equation}
which is to say,
\begin{equation}\label{b2}
\begin{aligned}
\| u(t) \|_{B^{1+\frac{d}{2}}_{2,1}} & \leq \| u_0 \|_{B^{1+\frac{d}{2}}_{2,1}} + C \int_{0}^{t}(|\alpha(r)| + |\beta(r)| + |\gamma(r)| + |\xi(r)|) \\
& \quad \times (1 + \| \nabla S \|^2_{L^{\infty}} + \| u \|^2_{L^{\infty}} + \| \Delta S \|^2_{L^{\infty}} + \| \nabla u \|^2_{L^{\infty}}) \| u \|_{B^{1+\frac{d}{2}}_{2,1}} dr.
\end{aligned}
\end{equation}
Thanks to the $L^1-$integrability for the Bessel kernel $G(x)$ associated to the Bessel potential $(1-\Delta)^{-1}$ (cf. Section 5 in \cite{Stein2016}), one can estimate as
\[
\|S\|_{L^{\infty}} = \|G \star u\|_{L^{\infty}} \leq \|G\|_{L^1} \|u\|_{L^{\infty}} \leq C \|u\|_{L^{\infty}},
\]
and
\[
\|\nabla S\|_{L^{\infty}} = \|G \star \nabla u\|_{L^{\infty}} = \|\nabla G \star u\|_{L^{\infty}}= \|\nabla G\|_{L^1} \|u\|_{L^{\infty}} \leq C \|u\|_{L^{\infty}}.
\]
It follows from \eqref{b2} that
\begin{equation}\label{b3}
\begin{aligned}
\| u(t) \|_{B^{1+\frac{d}{2}}_{2,1}} & \leq \| u_0 \|_{B^{1+\frac{d}{2}}_{2,1}} + C \int_{0}^{t}(|\alpha(r)| + |\beta(r)| + |\gamma(r)| + |\xi(r)|) \\
& \quad \times (1 + \| u \|^2_{L^{\infty}} + \| \nabla u \|^2_{L^{\infty}}) \| u \|_{B^{1+\frac{d}{2}}_{2,1}} dr.
\end{aligned}
\end{equation}

An application of the Littlewood-Paley decomposition leads to $\|u\|_{L^{\infty}} = \|\sum_{j \in \mathbb{Z}} \dot{\Delta}_j u\|_{L^{\infty}} \leq \sum_{j \in \mathbb{Z}} \|\ \dot{\Delta}_j u\|_{L^{\infty}}$, which implies that $\dot{B}^0_{\infty,1} ({\mathbb{R}^d}) \hookrightarrow L^{\infty} ({\mathbb{R}^d})$. Therefore, we deduce from \eqref{b3} that
\begin{equation}\label{b4}
\begin{aligned}
\| u(t) \|_{B^{1+\frac{d}{2}}_{2,1}} & \leq \| u_0 \|_{B^{1+\frac{d}{2}}_{2,1}} + C \int_{0}^{t}(|\alpha(r)| + |\beta(r)| + |\gamma(r)| + |\xi(r)|) \\
& \quad \times (1 + \| u \|^2_{\dot{B}^0_{\infty,1}} + \| \nabla u \|^2_{\dot{B}^0_{\infty,1}}) \| u \|_{B^{1+\frac{d}{2}}_{2,1}} dr,
\end{aligned}
\end{equation}
which together with the Gronwall inequality leads to
\begin{equation}\label{b5}
\begin{aligned}
\| u(t) \|_{B^{1+\frac{d}{2}}_{2,1}} \leq \| u_0 \|_{B^{1+\frac{d}{2}}_{2,1}} e^{\int_{0}^{t} (|\alpha(r)| + |\beta(r)| + |\gamma(r)| + |\xi(r)|) (1 +  \| u \|^2_{\dot{B}^0_{\infty,1}} + \| \nabla u \|^2_{\dot{B}^0_{\infty,1}}) dr}.
\end{aligned}
\end{equation}
If the solution $u$ of \eqref{SCNS-t1} blows up at finite time $T^{*} > 0$ but
\[
e^{\int_{0}^{T^{*}} (|\alpha(r)| + |\beta(r)| + |\gamma(r)| + |\xi(r)|) (\| u \|^2_{\dot{B}^0_{\infty,1}} + \| \nabla u \|^2_{\dot{B}^0_{\infty,1}}) dr} < \infty,
\]
then inequality \eqref{b5} implies that
\[
\sup_{t \in [0,T^{*}]} \| u(t) \|_{B^{1+\frac{d}{2}}_{2,1}} < \infty \quad \Rightarrow \quad \| u(T^{*}) \|_{B^{1+\frac{d}{2}}_{2,1}} < \infty.
\]
Thereby, by virtue of Theorem \ref{thm} with initia data $u_0 := u(T^{*})$, there must be a positive time $T^1 >0 $ such that the solution of \eqref{SCNS-t1} can be uniquely extended to the interval $[0,T^{*} + \frac{T^1}{2}]$, which is contradict to the fact that $T^{*}$ is maximum existence time.
Now let us derive the lower bound for the lifespan $T^{*}$. Using the embedding $B^{\frac{d}{2}}_{2,1} \hookrightarrow L^{\infty}$, it follows from \eqref{b3} that
\begin{equation*}
\begin{aligned}
\| u(t) \|_{B^{1+\frac{d}{2}}_{2,1}} & \leq \| u_0 \|_{B^{1+\frac{d}{2}}_{2,1}} + C \int_{0}^{t}(|\alpha(r)| + |\beta(r)| + |\gamma(r)| + |\xi(r)|) \left(1+\| u \|_{B^{1+\frac{d}{2}}_{2,1}}\right)^3 dr,
\end{aligned}
\end{equation*}
which is to say,
\begin{equation}\label{b6}
\begin{aligned}
1+\| u(t) \|_{B^{1+\frac{d}{2}}_{2,1}} & \leq 1+\| u_0 \|_{B^{1+\frac{d}{2}}_{2,1}} \\
& \quad + C \int_{0}^{t}(|\alpha(r)| + |\beta(r)| + |\gamma(r)| + |\xi(r)|) \left(1+\| u \|_{B^{1+\frac{d}{2}}_{2,1}}\right)^3 dr.
\end{aligned}
\end{equation}
Solving the inequality \eqref{b6} leads to
\begin{equation*}
\begin{aligned}
\| u(t) \|_{B^{1+\frac{d}{2}}_{2,1}} & \leq \frac{1+\| u_0 \|_{B^{1+\frac{d}{2}}_{2,1}}}{\left[ 1 - C \left(1+\| u_0 \|_{B^{1+\frac{d}{2}}_{2,1}}\right)^2 \int_{0}^{t}(|\alpha(r)| + |\beta(r)| + |\gamma(r)| + |\xi(r)|)dr \right]^{\frac{1}{2}}}-1.
\end{aligned}
\end{equation*}
By the assumption of $\alpha$, $\beta$, $\gamma$, $\xi \in L^1(0,\infty; \mathbb{R})$, the indefinite integral on any finite $[0,t]$ is absolutely continuous, hence one can find a $T(u_0)$ defined in Theorem \ref{blow-up_1} such that the blow-up time satisfies $T^{*} \geq T(u_0)$.

This finishes the proof of the Theorem \ref{blow-up_1}.
\end{proof}

\begin{proof}[\textbf{\emph{Proof of Theorem \ref{blow-up_2}.}}]
For any given initial data $u_0 \in B^s_{p,r}$ with $s > 1+ \frac{d}{p}$, the Theorem \ref{thm} in this work implies that the system \eqref{SCNS-t1} admits a unique solution $u$. According to the argument in the proof Theorem \ref{blow-up_1}, we have obtained the inequality
\begin{equation}\label{b7}
\begin{aligned}
\frac{d}{dt} \|\Delta_q u\|_{L^p} & \leq C \| [(\alpha(t) + \beta(t) u) \nabla S \cdot \nabla, \Delta_q]u \|_{L^p} + C \| \Delta_q [(\gamma(t)u + \xi(t) u^2) \Delta S] \|_{L^p} \\
& \quad + \| \div[(\alpha(t) + \beta(t) u) \nabla S] \|_{L^{\infty}} \| \Delta_q u \|_{L^p},
\end{aligned}
\end{equation}
which also holds for the case $p = \infty$ by taking the limit.

By multiplying both sides of \eqref{b7} by $2^{qs}$ and then taking the $l^r$ norm with respect to $q$, we get
\begin{equation}\label{b8}
\begin{aligned}
\frac{d}{dt} \|u(t)\|_{B^s_{p,r}} & \leq C \left(\sum_{q \geq -1} 2^{qrs} \| [(\alpha(t) + \beta(t) u) \nabla S \cdot \nabla, \Delta_q]u \|^r_{L^p}\right)^{\frac{1}{r}} \\
& \quad + C \|(\gamma(t)u + \xi(t) u^2) \Delta S \|_{B^s_{p,r}} \\
& \quad + \| \div[(\alpha(t) + \beta(t) u) \nabla S] \|_{L^{\infty}} \| u \|_{B^s_{p,r}}.
\end{aligned}
\end{equation}
Thanks to the commutator estimate (cf. Lemma \ref{ce}) for $s>1 + \frac{d}{p}>0$, we deduce that
\begin{equation*}
\begin{aligned}
& \left(\sum_{q \geq -1} 2^{qrs} \left\| \left[(\alpha(t) + \beta(t) u) \nabla S \cdot \nabla, \Delta_q\right] u \right\|^r_{L^p} \right)^{\frac{1}{r}}\\
& \leq C(|\alpha(t)| + |\beta(t)|) \left(\| \Delta S \|_{L^{\infty}} + \| \nabla u \|_{L^{\infty}} \| \nabla S \|_{L^{\infty}} + \| u \|_{L^{\infty}} \| \Delta S \|_{L^{\infty}} \right) \|u \|_{B^s_{p,r}} \\
& \quad + C(|\alpha(t)| + |\beta(t)|) \left(\| S \|_{B^s_{p,r}} + \| uS \|_{B^s_{p,r}} \right) \| \nabla u \|_{L^{\infty}} \\
& \leq C (|\alpha(t)| + |\beta(t)|) \left(1 + \| u \|^2_{L^{\infty}} + \| \nabla S \|^2_{L^{\infty}} + \| \nabla u \|^2_{L^{\infty}} + \| S \|^2_{L^{\infty}}\right) \|u \|_{B^s_{p,r}} \\
& \quad + C(|\alpha(t)| + |\beta(t)|) \left( (1 + \| \nabla u \|^2_{L^{\infty}} ) \| S \|_{B^s_{p,r}} + | \nabla u \|_{L^{\infty}} (\| u \|_{L^{\infty}} \| S \|_{B^s_{p,r}} + \| u \|_{B^s_{p,r}} \| S \|_{L^{\infty}}) \right) \\
& \leq C(|\alpha(t)| + |\beta(t)|) \left( 1 + \| u \|^2_{L^{\infty}} + \| \nabla S \|^2_{L^{\infty}} + \| \nabla u \|^2_{L^{\infty}} + \| S \|^2_{L^{\infty}} \right)(\| u \|_{B^s_{p,r}} + \| S \|_{B^s_{p,r}}).
\end{aligned}
\end{equation*}
Using the fact of $\| S \|_{B^s_{p,r}} \leq C\| u \|_{B^s_{p,r}}$, $\| S \|_{L^{\infty}} \leq C\| u \|_{L^{\infty}}$ and $\| \nabla S \|_{L^{\infty}} \leq C\| \nabla u \|_{L^{\infty}}$, we get from last inequality that
\[
\left(\sum_{q \geq -1} 2^{qrs} \left\| \left[(\alpha(t) + \beta(t) u) \nabla S \cdot \nabla, \Delta_q\right] u \right\|^r_{L^p} \right)^{\frac{1}{r}} \leq C(|\alpha(t)| + |\beta(t)|) \left( 1 + \| u \|^2_{L^{\infty}} + \| \nabla u \|^2_{L^{\infty}} \right)\| u \|_{B^s_{p,r}}.
\]
The other two terms on the right-hand side of \eqref{b8} can be estimated by Moser estimates (cf. Lemma \ref{me}) that
\begin{equation*}
\begin{aligned}
\|(\gamma(t)u + \xi(t) u^2) \Delta S \|_{B^s_{p,r}} & \leq C\left(\|\gamma(t)u + \xi(t) u^2\|_{L^{\infty}} \| \Delta S \|_{B^s_{p,r}} + \|\gamma(t)u + \xi(t) u^2\|_{B^s_{p,r}} \| \Delta S \|_{L^{\infty}} \right) \\
& \leq C (|\gamma(t)| + |\xi(t)|) \left(\|u + u^2\|_{L^{\infty}} \| \Delta S \|_{B^s_{p,r}} + \|u + u^2\|_{B^s_{p,r}} \| \Delta S \|_{L^{\infty}} \right) \\
& \leq C (|\gamma(t)| + |\xi(t)|) \left( (1 + \| u \|^2_{L^{\infty}})\| u \|_{B^s_{p,r}} + \| u \|_{B^s_{p,r}} \| u \|_{L^{\infty}} + \| u \|_{B^s_{p,r}} \| u \|^2_{L^{\infty}} \right) \\
& \leq C (|\gamma(t)| + |\xi(t)|)(1 + \| u \|^2_{L^{\infty}})\| u \|_{B^s_{p,r}},
\end{aligned}
\end{equation*}
and \begin{equation*}
\begin{aligned}
\| \div[(\alpha(t) + \beta(t) u) \nabla S] \|_{L^{\infty}} \| u \|_{B^s_{p,r}} & \leq C (|\alpha(t)| + |\beta(t)|) \| \div[(1 + u) \nabla S] \|_{L^{\infty}} \| u \|_{B^s_{p,r}} \\
& \leq C (|\alpha(t)| + |\beta(t)|)(1 + \| u \|^2_{L^{\infty}})\| u \|_{B^s_{p,r}}.
\end{aligned}
\end{equation*}
Plugging the last three estimates into \eqref{b8} yields that
\begin{equation}\label{b9}
\begin{aligned}
\frac{d}{dt} \|u(t)\|_{B^s_{p,r}} & \leq C (|\alpha(t)| + |\beta(t)| + |\gamma(t)| + |\xi(t)|) \left( 1 + \| u \|^2_{L^{\infty}} + \| \nabla u \|^2_{L^{\infty}} \right)\| u \|_{B^s_{p,r}}.
\end{aligned}
\end{equation}
Recall the following Brezis-Gallouet-Wainger type estimate(cf. \cite{Kozono2002}):
\[
\|f\|_{L^{\infty}} \leq C \left(1 + \| u(t) \|_{\dot{B}^{\frac{d}{p}}_{p,\rho}} \log^{1 - \frac{1}{\rho}}(e + \|u(t) \|_{B^s_{q,\sigma}}))\right), \quad \forall f \in \dot{B}^{\frac{d}{p}}_{p,\rho} (\mathbb{R}^d) \cap B^s_{q,\sigma}(\mathbb{R}^d),
\]
where $q \in [1,\infty)$, $p$, $\rho$, $\sigma \in [1,\infty]$ and $s > 1+\frac{d}{p}$.

Applying the last inequality for $s>1+\frac{d}{p}$ with $p \in [1,\infty)$, $\sigma \in [1,\infty]$, we can estimate the $L^{\infty}$-norm on the right-hand side of \eqref{b9} that
\begin{equation*}
\begin{aligned}
\|u\|_{L^{\infty}} \leq C \left( 1 + \|u\|_{\dot{B}^0_{\infty,2}} \log^{\frac{1}{2}}(e + \|u\|_{B^s_{p,r}}) \right),
\end{aligned}
\end{equation*}
and
\begin{equation*}
\begin{aligned}
\| \nabla u\|_{L^{\infty}} \leq C \left( 1 + \|\nabla u(t)\|_{\dot{B}^0_{\infty,2}} \log^{\frac{1}{2}}(e + \|\nabla u\|_{B^{s-1}_{p,r}}) \right).
\end{aligned}
\end{equation*}
After simple calculation, we derive from last two estimates that
\begin{equation}\label{b10}
\begin{aligned}
\|u\|^2_{L^{\infty}} + \| \nabla u\|^2_{L^{\infty}} & \leq C \left(1 + (\|u\|^2_{\dot{B}^0_{\infty,2}} + \|\nabla u\|^2_{\dot{B}^0_{\infty,2}}) \log(e + \|u\|_{B^s_{p,r}})\right) \\
& \leq C \left(1 + \|u\|^2_{\dot{B}^0_{\infty,2}} + \|\nabla u\|^2_{\dot{B}^0_{\infty,2}}\right) \log(e + \|u\|_{B^s_{p,r}}).
\end{aligned}
\end{equation}
By \eqref{b9} and \eqref{b10} and the property of $\log(e + \|u\|_{B^s_{p,r}}) \leq \log(e^2 + \|u\|^2_{B^s_{p,r}})$, we see that
\begin{equation*}
\begin{aligned}
\frac{d}{dt} \|u\|_{B^s_{p,r}} & \leq C (|\alpha(t)| + |\beta(t)| + |\gamma(t)| + |\xi(t)|) \\
& \quad \times \left(1 + \|u\|^2_{\dot{B}^0_{\infty,2}} + \|\nabla u\|^2_{\dot{B}^0_{\infty,2}}\right) \log(e^2 + \|u\|^2_{B^s_{p,r}})\|u\|_{B^s_{p,r}}.
\end{aligned}
\end{equation*}
Multiplying both sides of last inequality by $\|u\|_{B^s_{p,r}}$ and using the fact of $(a+b)^2 \sim a^2+b^2$, the last inequality can be transformed into
\begin{equation*}
\begin{aligned}
\frac{d}{dt} (e^2 + \|u\|^2_{B^s_{p,r}}) & \leq C (|\alpha(t)| + |\beta(t)| + |\gamma(t)| + |\xi(t)|) \\
& \quad \times \left(1 + \|u\|^2_{\dot{B}^0_{\infty,2}} + \|\nabla u\|^2_{\dot{B}^0_{\infty,2}}\right) (e^2 + \|u\|^2_{B^s_{p,r}}) \log(e^2 + \|u\|^2_{B^s_{p,r}}).
\end{aligned}
\end{equation*}
It then follows that
\begin{equation*}
\begin{aligned}
\frac{d}{dt} \log \log (e^2 + \|u\|^2_{B^s_{p,r}}) \leq C (|\alpha(t)| + |\beta(t)| + |\gamma(t)| + |\xi(t)|) \left(1 + \|u\|^2_{\dot{B}^0_{\infty,2}} + \|\nabla u\|^2_{\dot{B}^0_{\infty,2}}\right),
\end{aligned}
\end{equation*}
which implies that
\begin{equation}\label{b11}
\begin{aligned}
\log (e^2 & + \|u\|^2_{B^s_{p,r}}) \leq \log (e^2 + \|u\|^2_{B^s_{p,r}}) \\
& \times e^{C \int_{0}^{t} (|\alpha(\tau)| + |\beta(\tau)| + |\gamma(\tau)| + |\xi(\tau)|) (1 + \|u(\tau)\|^2_{\dot{B}^0_{\infty,2}} + \|\nabla u(\tau)\|^2_{\dot{B}^0_{\infty,2}}) d\tau}.
\end{aligned}
\end{equation}
Due to \eqref{b11} and using the similar argument as that in the proof of Theorem \ref{blow-up_1}, one can also prove that the solution $u$ blows up in finite time. We shall omit the details here.

To derive the lower bound for the blow-up time $T^{*}$, we shall use a method which is different from \eqref{b6}. More specifically, by applying the logarithmic Sobolev inequality \eqref{b10}, we have
\begin{equation}\label{b12}
\begin{aligned}
\|u\|^2_{L^{\infty}} + \| \nabla u\|^2_{L^{\infty}} & \leq C \left[\left(1 + \|u\|_{\dot{B}^0_{\infty,\infty}} \log(e + \|u\|_{B^s_{p,r}}) \right)^2 \right. \\
& \quad + \left.\left(1 + \|\nabla u\|_{\dot{B}^0_{\infty,\infty}} \log(e + \|\nabla u\|_{B^{s-1}_{p,r}}) \right)^2 \right] \\
& \leq C (1 + \|u\|^2_{\dot{B}^0_{\infty,\infty}} + \|\nabla u\|^2_{\dot{B}^0_{\infty,\infty}}) \left(\log (e^2 + \|u\|^2_{B^s_{p,r}})\right)^2.
\end{aligned}
\end{equation}
Note that the existence of the quadratic function $\log^2 (e^2 + \|u\|^2_{B^s_{p,r}})$ in \eqref{b12} makes the combination of estimates \eqref{b9} and \eqref{b12} insufficient for the establishment of the blow-up criteria in space $\dot{B}^0_{\infty,\infty}$. Furthermore, in terms of the Sobolev embedding $B^s_{p,r} \hookrightarrow L^{\infty} (s >1+\frac{d}{p})$ and $L^{\infty} \hookrightarrow B^0_{\infty,\infty} \hookrightarrow \dot{B}^0_{\infty,\infty}$, we get from \eqref{b12} that
\begin{equation}\label{b13}
\begin{aligned}
\|u\|^2_{L^{\infty}} + \| \nabla u\|^2_{L^{\infty}} \leq C (1 + \|u\|^2_{B^s_{p,r}} + \|\nabla u\|^2_{B^{s-1}_{p,r}}) \left(\log (e^2 + \|u\|^2_{B^s_{p,r}})\right)^2.
\end{aligned}
\end{equation}
Combining the estimates \eqref{b9} and \eqref{b13} and using the inequality $\log (e^2 + x) \leq e^2 + x$ for all $x \geq 0$, we have
\begin{equation}\label{b14}
\begin{aligned}
\frac{d}{dt} \|u(t)\|_{B^s_{p,r}} & \leq C (|\alpha(t)| + |\beta(t)| + |\gamma(t)| + |\xi(t)|)  \\
& \quad \times (1 + \|u\|^2_{B^s_{p,r}} + \|\nabla u\|^2_{B^{s-1}_{p,r}}) \left(\log (e^2 + \|u\|^2_{B^s_{p,r}})\right)^2 \| u \|_{B^s_{p,r}} \\
& \leq C (|\alpha(t)| + |\beta(t)| + |\gamma(t)| + |\xi(t)|) \left(e^2 + \|u\|^2_{B^s_{p,r}}\right)^3 \| u \|_{B^s_{p,r}} \\
& \leq C (|\alpha(t)| + |\beta(t)| + |\gamma(t)| + |\xi(t)|) \left(e + \|u\|_{B^s_{p,r}}\right)^7.
\end{aligned}
\end{equation}
Solving the inequality \eqref{b14} leads to
\begin{equation*}
\begin{aligned}
e + \|u\|_{B^s_{p,r}} \leq \frac{e + \|u_0\|_{B^s_{p,r}}}{\left[1- C \int_{0}^{t}(|\alpha(t)| + |\beta(t)| + |\gamma(t)| + |\xi(t)|)dr \left(e + \|u_0\|_{B^s_{p,r}}\right)^6 \right]^{\frac{1}{6}}}.
\end{aligned}
\end{equation*}
Due to the condition of $\alpha$, $\beta$, $\gamma$, $\xi \in L^1(0,\infty; \mathbb{R})$, the indefinite integral on any finite $[0,t]$ is absolutely continuous, so there is a constant $T'(u_0) > 0$ defined in Theorem \ref{blow-up_2} such that the lifespan $T^{*}$ of the solution $u$ satisfies $T^{*} \geq T'(u_0)$.

This completes the proof of Theorem \ref{blow-up_2}.
\end{proof}

\section{appendix}
In this section, we recall some well-known facts of the Littlewood-Paley decomposition theory and the linear transport theory in Besov spaces.
\begin{lemma}[\cite{Bahouri2011}]\label{l1}
Denote by $\mathcal{C}$ the annulus of centre $0$, short radius $3/4$ and long radius $8/3$. Then there exists two positive radial functions $\chi$ and $\varphi$ belonging respectively to $C^{\infty}_c(B(0,4/3))$ and $C^{\infty}_c(\mathcal{C})$ such that
$$
\chi(\xi) + \sum_{q \geq 0} \varphi(2^{-q} \xi) = 1, \quad \forall \xi \in \mathbb{R}^d,
$$

$$
|p-q| \geq 2 \Rightarrow \textrm{supp} \varphi (2^{-q} \cdot) \cap \textrm{supp} \varphi (2^{-p} \cdot) = \emptyset,
$$
$$
q \geq 1 \Rightarrow \textrm{supp} \chi (\cdot) \cap \textrm{supp} \varphi (2^{-q} \cdot) = \emptyset,
$$
and
$$
\frac{1}{3} \leq {\chi}^2(\xi) + \sum_{q \geq 0} {\varphi}^2(2^{-q} \xi) \leq 1,  \quad \forall \xi \in \mathbb{R}^d.
$$
\end{lemma}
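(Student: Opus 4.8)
The plan is to build $\chi$ and $\varphi$ by a single elementary construction and then verify the four assertions in turn, the only genuinely delicate point being the numerology of the radii. First I would fix a radial, radially non-increasing function $\chi \in C^\infty_c(B(0,4/3))$ with $0 \le \chi \le 1$ and $\chi \equiv 1$ on the ball $B(0,3/4)$; such a function exists by a standard mollification of the indicator of an intermediate ball. I would then \emph{define} $\varphi(\xi) := \chi(\xi/2) - \chi(\xi)$, so that $\varphi$ is automatically smooth, and the radial monotonicity of $\chi$ together with $|\xi/2| \le |\xi|$ forces $\varphi \ge 0$, which secures the required positivity.

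Second, I would locate the support of $\varphi$. On $B(0,3/4)$ both $\chi(\xi)$ and $\chi(\xi/2)$ equal $1$, so $\varphi$ vanishes there; for $|\xi| \ge 8/3$ both terms vanish. Hence $\mathrm{supp}\,\varphi \subset \mathcal C$. The partition-of-unity identity is then pure telescoping: summing $\varphi(2^{-q}\xi) = \chi(2^{-q-1}\xi) - \chi(2^{-q}\xi)$ over $0 \le q \le N$ collapses to $\chi(2^{-N-1}\xi) - \chi(\xi)$, and letting $N \to \infty$ uses $\chi(2^{-N-1}\xi) \to \chi(0) = 1$ to give $\chi(\xi) + \sum_{q\ge 0}\varphi(2^{-q}\xi) = 1$ for all $\xi$.

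Third, the two disjointness statements reduce to comparing the radii of the rescaled annuli. Since $\mathrm{supp}\,\varphi(2^{-q}\cdot) \subset \{ 2^q\cdot\tfrac34 \le |\xi| \le 2^q\cdot\tfrac83 \}$, two such shells with $|p-q|\ge 2$ are separated precisely because $\tfrac83 < 3 = 4\cdot\tfrac34$; and $\mathrm{supp}\,\chi \subset \{|\xi|\le \tfrac43\}$ is separated from $\mathrm{supp}\,\varphi(2^{-q}\cdot)$ for $q\ge 1$ because $\tfrac43 < \tfrac32 = 2\cdot\tfrac34$. \textbf{This numerology is the crux of the argument}: it is exactly what guarantees that, at any fixed frequency $\xi$, at most two consecutive members of the family $\{\chi,\varphi(2^{-q}\cdot)\}_{q\ge 0}$ are nonzero.

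Finally, I would deduce the square-sum bounds from this two-overlap property. Writing the (at most two) nonzero terms at a fixed $\xi$ as $a,b \ge 0$, the partition identity forces $a+b = 1$, and the elementary inequalities $\tfrac12(a+b)^2 \le a^2+b^2 \le (a+b)^2$ give $\tfrac12 \le \chi^2(\xi) + \sum_{q\ge0}\varphi^2(2^{-q}\xi) \le 1$, which in particular yields the stated bounds with lower constant $\tfrac13$. I expect no real obstacle beyond bookkeeping; the one place demanding care is ensuring the chosen $\chi$ is genuinely radially non-increasing so that $\varphi$ stays nonnegative, and verifying the strict radius inequalities above, since a careless choice of the annulus would destroy the disjointness and hence the whole construction.
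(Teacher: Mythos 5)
Your proof is correct. Note first that the paper itself offers no proof of this lemma: it is recalled in the appendix as background and cited to \cite{Bahouri2011}, where it is Proposition 2.10 (stated there with the sharper lower constant $\tfrac12$ in place of $\tfrac13$). Your telescoping construction, $\varphi(\xi):=\chi(\xi/2)-\chi(\xi)$ with $\chi$ radial and radially non-increasing, is genuinely different from the proof in that reference, which instead fixes a bump $\theta$ supported in the annulus and positive on a slightly smaller annulus whose dyadic dilates cover $\mathbb{R}^d\setminus\{0\}$, normalizes $\varphi:=\theta/\sum_{j\in\mathbb{Z}}\theta(2^{-j}\cdot)$, and then sets $\chi:=1-\sum_{j\geq 0}\varphi(2^{-j}\cdot)$. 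Your route buys elementarity: the partition identity is a telescope rather than a normalization (no need to check smoothness and positivity of the normalizing sum), and nonnegativity of $\varphi$ is immediate from radial monotonicity; the normalization route buys flexibility, since it also produces the homogeneous identity $\sum_{j\in\mathbb{Z}}\varphi(2^{-j}\xi)=1$ for $\xi\neq 0$ and works for an arbitrary prescribed annulus, not just one obtained by doubling a ball. All of your numerical verifications are right: $\tfrac83<3=4\cdot\tfrac34$ and $\tfrac43<\tfrac32=2\cdot\tfrac34$ give the two disjointness statements, these give the at-most-two-overlaps property, and then $a+b=1$ with $a,b\geq 0$ gives $\tfrac12\leq a^2+b^2\leq 1$, which is even sharper than the stated $\tfrac13$. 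Two points of bookkeeping: (i) with your construction $\mathrm{supp}\,\varphi$ may touch the inner sphere $\{|\xi|=3/4\}$, so $C^\infty_c(\mathcal{C})$ must be read, as in \cite{Bahouri2011}, with $\mathcal{C}$ the \emph{closed} annulus $\{3/4\leq|\xi|\leq 8/3\}$; (ii) the cleanest way to obtain a radially non-increasing $\chi$ is to set $\chi(\xi)=\psi(|\xi|)$ for a smooth non-increasing profile $\psi$ equal to $1$ on $[0,3/4]$ and to $0$ near $4/3$, which avoids invoking the (true, but rearrangement-type) fact that mollifying the indicator of a ball preserves radial monotonicity.
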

For the function $\chi$ and $\varphi$ satisfying Lemma \ref{l1}, we write $h = \mathcal{F}^{-1} \varphi$ and $\tilde{h} = \mathcal{F}^{-1} \chi$, where $\mathcal{F}^{-1} u$ denotes the inverse Fourier transformation of $u$. The nonhomogeneous  dyadic blocks $\Delta_j$ are defined by
$$
\Delta_q u \stackrel{\text{def}}{=} 0,\quad q \leq -2; \quad \Delta_{-1} u \stackrel{\text{def}}{=} \chi(D)u = \int_{\mathbb{R}} u(x-y) \tilde{h} (y)dy, \quad q=-1;
$$
$$
\Delta_q u \stackrel{\text{def}}{=} \varphi (2^{-j}D)u = \int_{\mathbb{R}^d} u(x-y) h (2^{-j}y)dy, \quad q \geq 0.
$$
The nonhomogeneous Littlewood-Paley decomposition of $ u \in \mathscr{S}'$ is given by
$$
u = \sum_{q \geq -1} \Delta_q u.
$$
We also define the high-frequency cut-off operator as
$$
S_q u \stackrel{\text{def}}{=} \sum_{p \leq q-1} \Delta_p u, \quad \forall q \in \mathbb{N}.
$$
\begin{definition}[\cite{Bahouri2011}]
For any $s \in \mathbb{R}$ and $p,r \in [1,\infty]$, the $d-$dimension nonhomogeneous Besov space $B^s_{p,r}$ is defined by
$$
B^s_{p,r} \stackrel{\text{def}}{=} \{ u\in \mathscr{S}';  \| u \|_{B^s_{p,r}} = \| (2^{qs} \| \Delta_q u \|_{L^p})_{l \geq {-1}} \|_{l^r} < \infty\}.
$$
If $s = \infty$, $B^{\infty}_{p,r} \stackrel{\text{def}}{=} \cap_{s \in \mathbb{R}} B^s_{p,r}$.
\end{definition}

\begin{lemma}[\cite{Bahouri2011}]
A smooth function $f: \mathbb{R}^d \to \mathbb{R}$ is said to be an $S^m$-multiplier: if $\forall \alpha \in \mathbb{N}^n$, there exists a constant $C_{\alpha} > 0$ such that
\[
|\partial^{\alpha} f(\xi)| \leq C_{\alpha} (1+ |\xi|)^{m - |\alpha|}, \quad \xi \in \mathbb{R}^d.
\]
If $f$ is a $S^m$-multiplier, then the operator $f(D)$ is continuous from $B^s_{p,r}(\mathbb{R}^d)$ to $B^{s-m}_{p,r}(\mathbb{R}^d)$, for all $s \in \mathbb{R}$ and $1 \leq p,r \leq \infty$.
\end{lemma}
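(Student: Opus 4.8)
The plan is to reduce the mapping property to a single dyadic-block estimate and then read off the conclusion directly from the definition of the Besov norm. Since both $\Delta_q$ and $f(D)$ are Fourier multipliers they commute, and the norm $\|\cdot\|_{B^{s-m}_{p,r}}$ is built blockwise from $\|\Delta_q(\cdot)\|_{L^p}$. Hence it suffices to establish the uniform (in $q\ge -1$) inequality
\[
\|\Delta_q f(D)u\|_{L^p}\le C\,2^{qm}\,\|\Delta_q u\|_{L^p},
\]
because multiplying by $2^{q(s-m)}$ gives $2^{q(s-m)}\|\Delta_q f(D)u\|_{L^p}\le C\,2^{qs}\|\Delta_q u\|_{L^p}$, and taking the $\ell^r$-norm in $q$ yields $\|f(D)u\|_{B^{s-m}_{p,r}}\le C\|u\|_{B^s_{p,r}}$ for every $s\in\mathbb R$ and $1\le p,r\le\infty$.

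To prove the block estimate for $q\ge 0$, I would first localize the symbol. Choose $\tilde\varphi\in C_c^\infty(\mathbb R^d)$ which equals $1$ on the annulus $\mathcal C$ of Lemma~\ref{l1} and is supported in a slightly larger annulus; then $\varphi(2^{-q}\cdot)\tilde\varphi(2^{-q}\cdot)=\varphi(2^{-q}\cdot)$, so that, writing $m_q(\xi):=f(\xi)\tilde\varphi(2^{-q}\xi)$, one has $\Delta_q f(D)u=m_q(D)\Delta_q u=(\mathcal F^{-1}m_q)\star\Delta_q u$. By Young's convolution inequality,
\[
\|\Delta_q f(D)u\|_{L^p}\le \|\mathcal F^{-1}m_q\|_{L^1}\,\|\Delta_q u\|_{L^p},
\]
so everything reduces to the uniform kernel bound $\|\mathcal F^{-1}m_q\|_{L^1}\le C\,2^{qm}$.

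The crux is this kernel bound, and I would obtain it by rescaling to a fixed annulus. Set $\psi_q(\eta):=m_q(2^q\eta)=f(2^q\eta)\tilde\varphi(\eta)$, which is supported in the fixed set $\mathrm{supp}\,\tilde\varphi$ independently of $q$. Using the $S^m$-multiplier hypothesis together with the chain rule, $\partial_\eta^\alpha[f(2^q\eta)]=2^{q|\alpha|}(\partial^\alpha f)(2^q\eta)$ and $|(\partial^\alpha f)(2^q\eta)|\le C_\alpha(1+2^q|\eta|)^{m-|\alpha|}$; since $|\eta|\sim 1$ on the annulus, the factor $2^{q|\alpha|}$ exactly cancels the decay $(1+2^q|\eta|)^{-|\alpha|}$ and one gets $\|\psi_q\|_{C^N}\le C_N\,2^{qm}$ uniformly in $q$. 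A scaling computation gives $\mathcal F^{-1}m_q(x)=2^{qd}(\mathcal F^{-1}\psi_q)(2^q x)$, whence $\|\mathcal F^{-1}m_q\|_{L^1}=\|\mathcal F^{-1}\psi_q\|_{L^1}$. Finally, the standard integration-by-parts (non-stationary phase) estimate for a smooth compactly supported symbol yields $|\mathcal F^{-1}\psi_q(y)|\le C_N\|\psi_q\|_{C^N}(1+|y|)^{-N}\le C_N\,2^{qm}(1+|y|)^{-N}$, which is integrable once $N>d$; this delivers $\|\mathcal F^{-1}m_q\|_{L^1}\le C\,2^{qm}$, as required.

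It remains to treat the low-frequency block $q=-1$ and to assemble the pieces. For $q=-1$ the symbol $\chi(\xi)f(\xi)$ is a single smooth, compactly supported function, so the same integration-by-parts argument gives $\|\mathcal F^{-1}(\chi f)\|_{L^1}<\infty$ and hence $\|\Delta_{-1}f(D)u\|_{L^p}\le C\|\Delta_{-1}u\|_{L^p}$, consistent with the convention $2^{-m}\sim 1$. Combining all blocks and taking $\ell^r$ in $q$ completes the proof. I expect the only genuine difficulty to be the uniform-in-$q$ kernel bound of the third paragraph: the whole point is that, after rescaling to a $q$-independent annulus, the derivatives of the symbol grow exactly like $2^{qm}$ and no worse, so that the Mikhlin-type $L^1$ estimate produces the sharp factor $2^{qm}$; tracking this homogeneity carefully is what makes the argument work.
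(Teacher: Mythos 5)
This lemma is not proved in the paper at all: it is recalled in the appendix as a known fact, cited directly from the Bahouri--Chemin--Danchin monograph \cite{Bahouri2011}, so there is no in-paper proof to compare against. Your argument is precisely the standard proof of that textbook result (dyadic commutation, localization of the symbol on a fattened annulus, rescaling to a fixed annulus so that the $S^m$ bounds produce $C^N$-norms of size $2^{qm}$, the integration-by-parts $L^1$ kernel estimate, and Young's inequality), and it is correct; the reduction, the uniform kernel bound $\|\mathcal{F}^{-1}m_q\|_{L^1}\leq C2^{qm}$, and the $\ell^r$ summation are all carried out properly.

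One small repair is needed in your treatment of the block $q=-1$. As written, Young's inequality with the kernel $\mathcal{F}^{-1}(\chi f)$ gives
$\|\Delta_{-1}f(D)u\|_{L^p}\leq \|\mathcal{F}^{-1}(\chi f)\|_{L^1}\,\|u\|_{L^p}$,
i.e.\ the full $L^p$ norm of $u$ on the right, which is not controlled by $\|\Delta_{-1}u\|_{L^p}$ (nor by the Besov norm for general $s$). The fix is the same fattening trick you already used for $q\geq 0$: choose $\tilde\chi\in C^\infty_c(\mathbb{R}^d)$ equal to $1$ on $B(0,4/3)\supset\mathrm{supp}\,\chi$, so that $\chi f=\chi\cdot(f\tilde\chi)$ and hence
\begin{equation*}
\Delta_{-1}f(D)u=(f\tilde\chi)(D)\,\Delta_{-1}u
=\bigl(\mathcal{F}^{-1}(f\tilde\chi)\bigr)\star\Delta_{-1}u ,
\end{equation*}
after which Young's inequality with the Schwartz (hence $L^1$) kernel $\mathcal{F}^{-1}(f\tilde\chi)$ yields $\|\Delta_{-1}f(D)u\|_{L^p}\leq C\|\Delta_{-1}u\|_{L^p}$ as you claimed. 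With this adjustment the proof is complete.
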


\begin{lemma}[Interpolation inequalities \cite{Danchin2005}]\label{interpolation}
\begin{enumerate}
\item If $ u \in B^{s_1}_{p,r}(\mathbb{R}^d) \cap B^{s_2}_{p,r}(\mathbb{R}^d)$, then
\[
\| u \|_{B^{\theta s_1 + (1-\theta) s_2}_{p,r}} \leq C \| u \|^{\theta}_{B^{s_1}_{p,r}} \| u \|^{1 - \theta}_{B^{s_2}_{p,r}},\quad \forall \theta \in [0,1].
\]
\item For any $s \in \mathbb{R}$, $\epsilon >0$ and $1 \leq p \leq \infty$, there exists a constant $C > 0$ such that
\[
\| u \|_{B^s_{p,1}} \leq C \frac{\epsilon +1}{\epsilon} \| u \|_{B^s_{p,\infty}} \left( 1 + \log \frac{\| u \|_{B^{s+\epsilon}_{p,\infty}}}{\| u \|_{B^s_{p,\infty}}}\right).
\]
\end{enumerate}
\end{lemma}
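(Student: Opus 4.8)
The plan is to prove both inequalities directly from the definition of the nonhomogeneous Besov norm $\|u\|_{B^s_{p,r}} = \|(2^{qs}\|\Delta_q u\|_{L^p})_{q \ge -1}\|_{\ell^r}$, using only elementary manipulations of the dyadic blocks $\Delta_q$. For part (1), the key observation is the pointwise factorization of the frequency weight: writing $s = \theta s_1 + (1-\theta)s_2$, one has for each $q \ge -1$
\[
2^{qs}\|\Delta_q u\|_{L^p} = \left(2^{qs_1}\|\Delta_q u\|_{L^p}\right)^{\theta}\left(2^{qs_2}\|\Delta_q u\|_{L^p}\right)^{1-\theta}.
\]
I would then apply the generalized H\"older inequality in the sequence space $\ell^r$ with the exponents $\tfrac{r}{\theta}$ and $\tfrac{r}{1-\theta}$ (which are conjugate relative to $\ell^r$, since $\tfrac{\theta}{r}+\tfrac{1-\theta}{r}=\tfrac1r$) to the two factors above, obtaining
\[
\|u\|_{B^{\theta s_1 + (1-\theta)s_2}_{p,r}} \le \|u\|_{B^{s_1}_{p,r}}^{\theta}\,\|u\|_{B^{s_2}_{p,r}}^{1-\theta}.
\]
The endpoints $\theta \in \{0,1\}$ are trivial, and the case $r=\infty$ follows by taking suprema termwise; this step is routine.

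For part (2), I would split the $\ell^1$-sum defining $\|u\|_{B^s_{p,1}}$ at a frequency cutoff $N \in \mathbb{N}$, to be chosen later. On the low-frequency range $-1 \le q \le N$, each term satisfies $2^{qs}\|\Delta_q u\|_{L^p} \le \|u\|_{B^s_{p,\infty}}$, giving a contribution of at most $(N+2)\|u\|_{B^s_{p,\infty}}$. On the high-frequency range $q > N$, I would write $2^{qs} = 2^{-q\epsilon}\,2^{q(s+\epsilon)}$ to bound each term by $2^{-q\epsilon}\|u\|_{B^{s+\epsilon}_{p,\infty}}$ and then sum the resulting geometric series, using the estimate $\frac{1}{1-2^{-\epsilon}} \le C\,\frac{1+\epsilon}{\epsilon}$ (which follows from $e^{x}\ge 1+x$, hence $1-2^{-\epsilon}=1-e^{-\epsilon\ln 2}\ge \frac{\epsilon\ln 2}{1+\epsilon\ln 2}$). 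This yields
\[
\|u\|_{B^s_{p,1}} \le (N+2)\,\|u\|_{B^s_{p,\infty}} + C\,\frac{1+\epsilon}{\epsilon}\,2^{-N\epsilon}\,\|u\|_{B^{s+\epsilon}_{p,\infty}}.
\]

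The final and most delicate step is the optimization over $N$. I would choose $N$ to be the integer part of $\frac{1}{\epsilon}\log_2\!\big(\|u\|_{B^{s+\epsilon}_{p,\infty}}/\|u\|_{B^s_{p,\infty}}\big)$ whenever this quantity is nonnegative (and $N=-1$ otherwise), so that the two contributions balance: the high-frequency term becomes comparable to $C\,\frac{1+\epsilon}{\epsilon}\|u\|_{B^s_{p,\infty}}$, while $N+2 \lesssim \frac{1}{\epsilon}\big(1 + \log\frac{\|u\|_{B^{s+\epsilon}_{p,\infty}}}{\|u\|_{B^s_{p,\infty}}}\big)$. Substituting gives the claimed bound with the precise prefactor $\frac{\epsilon+1}{\epsilon}$. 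The main obstacle is tracking the $\epsilon$-dependence of the constant accurately through both the geometric-series estimate and the logarithmic balancing, together with handling the degenerate regime $\|u\|_{B^{s+\epsilon}_{p,\infty}} \le \|u\|_{B^s_{p,\infty}}$, where the logarithm is nonpositive and must be absorbed by the additive $1$ in the factor $\big(1+\log(\cdot)\big)$; in that regime one simply takes $N=-1$ and estimates the whole high-frequency sum by $C\,\frac{1+\epsilon}{\epsilon}\|u\|_{B^{s+\epsilon}_{p,\infty}} \le C\,\frac{1+\epsilon}{\epsilon}\|u\|_{B^s_{p,\infty}}$.
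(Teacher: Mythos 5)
The paper never proves this lemma: it is recalled in the appendix as a known result quoted from \cite{Danchin2005}, so there is no in-paper argument to compare yours against. Your proof is the standard one from the literature. Part (1) — the pointwise factorization $2^{qs}\|\Delta_q u\|_{L^p} = \bigl(2^{qs_1}\|\Delta_q u\|_{L^p}\bigr)^{\theta}\bigl(2^{qs_2}\|\Delta_q u\|_{L^p}\bigr)^{1-\theta}$ followed by H\"older with exponents $r/\theta$ and $r/(1-\theta)$ — is correct (even with $C=1$). For part (2), the low/high splitting at a cutoff $N$, the bound $(N+2)\|u\|_{B^s_{p,\infty}}$ for the low frequencies, the geometric-series estimate with $\tfrac{1}{1-2^{-\epsilon}} \le C\tfrac{1+\epsilon}{\epsilon}$, and the logarithmic balancing choice of $N$ are all correct, and they do prove the claimed inequality whenever $\|u\|_{B^{s+\epsilon}_{p,\infty}} \ge \|u\|_{B^s_{p,\infty}}$, i.e.\ whenever the logarithm is nonnegative.

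The gap is in your final sentence, the degenerate regime $R := \|u\|_{B^{s+\epsilon}_{p,\infty}}/\|u\|_{B^s_{p,\infty}} < 1$. There you correctly obtain $\|u\|_{B^s_{p,1}} \le C\tfrac{1+\epsilon}{\epsilon}\|u\|_{B^s_{p,\infty}}$, but this does \emph{not} yield the stated bound: when $\log R < 0$ the factor $1+\log R$ is \emph{smaller} than $1$, so a negative logarithm cannot be ``absorbed by the additive $1$'' — absorption runs in the wrong direction. In fact the statement as printed is false in this regime: take $u\neq 0$ with $\hat{u}$ supported in $\{|\xi|\le 1/2\}$, so that $u = \Delta_{-1}u$ and $\Delta_q u = 0$ for $q\ge 0$; then $\|u\|_{B^{\sigma}_{p,\infty}} = 2^{-\sigma}\|u\|_{L^p}$ for every $\sigma$, hence $R = 2^{-\epsilon}$ and $1+\log R = 1-\epsilon\ln 2 \le 0$ as soon as $\epsilon \ge 1/\ln 2$, making the right-hand side nonpositive while $\|u\|_{B^s_{p,1}} = 2^{-s}\|u\|_{L^p} > 0$. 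Even for $\epsilon < 1/\ln 2$, justifying the absorption would require the lower bound $R \ge 2^{-\epsilon}$ (which you never establish, though it is easy), and the resulting constant degenerates like $(1-\epsilon\ln 2)^{-1}$. What your estimates actually prove is the correct form of the lemma — with $\log\bigl(e + R\bigr)$ (which is $\ge 1$) in place of $1+\log R$ — and that is both how the inequality appears in the cited source and all the paper ever uses: in Section 4 the authors themselves pass from $1+\log x$ to $(e+1)\log(e+x)$ under the proviso $x\ge 1$. So the defect is shared between your write-up and the paper's transcription of the lemma; as a proof of the literal $(1+\log)$ statement, your last step is invalid and cannot be repaired, whereas your argument is complete for the $\log(e+\cdot)$ version.
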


\begin{lemma}[Commutator estimates \cite{Bahouri2011}]\label{ce}
Let $\sigma > 0$, $1 \leq r \leq \infty$ and $1 \leq p \leq p_1 \leq \infty$. Let $v$ be a vector field over $\mathbb{R}^d$. Then the following estimates hold,
\[
\| (2^{j\sigma} \|[v \cdot \nabla,\Delta_j]f \|_{L^p})_{j \in \mathbb{N}} \|_{l^r} \leq C(\| \nabla v \|_{L^{\infty}} \|f \|_{B^{\sigma}_{p,r}} + \| \nabla f \|_{L^{p_2}} \| v \|_{B^{\sigma - 1}_{p_1,r}}),
\]
where $\frac{1}{p} = \frac{1}{p_1} + \frac{1}{p_2}$. In addition, if $\sigma < 1+\frac{d}{p_1}$, we have
\[
\| (2^{j\sigma} \|[v \cdot \nabla,\Delta_j]f \|_{L^p})_{j \in \mathbb{N}} \|_{l^r} \leq C \| \nabla v \|_{B^{\frac{d}{p}}_{p,\infty} \cap L^{\infty}} \|f \|_{B^{\sigma}_{p,r}}.
\]
\end{lemma}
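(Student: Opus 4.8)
The plan is to prove the commutator bound through Bony's paraproduct decomposition, isolating the one term in which the commutator structure yields a genuine gain of a derivative. Writing $v\cdot\nabla f=v^k\partial_k f$ (with summation over $k$) and applying the decomposition $ab=T_ab+T_ba+R(a,b)$, where $T_ab=\sum_{j'}S_{j'-1}a\,\Delta_{j'}b$ and $R(a,b)=\sum_{|j'-j''|\le1}\Delta_{j'}a\,\Delta_{j''}b$, to both $v^k\,\Delta_j\partial_k f$ and $\Delta_j(v^k\partial_k f)$, I would split
\[
[v\cdot\nabla,\Delta_j]f=([T_{v^k},\Delta_j]\partial_k f)+(T_{\partial_k\Delta_j f}v^k-\Delta_j T_{\partial_k f}v^k)+(R(v^k,\partial_k\Delta_j f)-\Delta_j R(v^k,\partial_k f)),
\]
the first bracket being the paraproduct commutator and the last two being the low--high and high--high remainders. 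Only the first carries the cancellation that lets one recover the lost derivative.

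For the paraproduct commutator I would use the kernel representation $\Delta_j g(x)=\int h_j(y)\,g(x-y)\,dy$ with $h_j(y)=2^{jd}h(2^jy)$, so that the only nonvanishing interactions have $|j-j'|\le N_0$ for a fixed $N_0$, and
\[
[S_{j'-1}v^k,\Delta_j](\partial_k\Delta_{j'}f)(x)=\int h_j(y)(S_{j'-1}v^k(x-y)-S_{j'-1}v^k(x))\,\partial_k\Delta_{j'}f(x-y)\,dy.
\]
A first--order Taylor expansion gives $|S_{j'-1}v^k(x-y)-S_{j'-1}v^k(x)|\le|y|\,\|\nabla v\|_{L^\infty}$, and since $\int|y|\,|h_j(y)|\,dy\le C2^{-j}$ the factor $2^{-j}$ exactly compensates the derivative $\partial_k$. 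Young's convolution inequality then yields $\|[T_{v^k},\Delta_j]\partial_k f\|_{L^p}\le C2^{-j}\|\nabla v\|_{L^\infty}\sum_{|j'-j|\le N_0}\|\partial_k\Delta_{j'}f\|_{L^p}$, and after multiplication by $2^{j\sigma}$ and taking the $\ell^r$ norm this contributes $C\|\nabla v\|_{L^\infty}\|f\|_{B^\sigma_{p,r}}$.

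The remaining remainder and low--high terms carry no cancellation, so I would estimate them directly: using the quasi--orthogonality $\Delta_j S_{j'-1}=0$ for $j'\le j-2$ and the support properties of the blocks, each term localizes to $|j'-j|\le N_0$ or to the diagonal range, whereupon Hölder's inequality with $\tfrac1p=\tfrac1{p_1}+\tfrac1{p_2}$ peels off $\|\partial_k f\|_{L^{p_2}}$ from a frequency--localized piece of $v$; summing in $\ell^r$ with the weight $2^{j\sigma}$ reproduces $C\|\nabla f\|_{L^{p_2}}\|v\|_{B^{\sigma-1}_{p_1,r}}$, and combining the three contributions gives the first inequality. For the refined inequality, valid when $\sigma<1+\tfrac d{p_1}$, I would re-run the same decomposition but keep $\nabla v$ in the norm $B^{d/p}_{p,\infty}\cap L^\infty$ throughout instead of pulling out $\|\nabla f\|_{L^{p_2}}$; the restriction $\sigma<1+\tfrac d{p_1}$ is precisely what renders the high--frequency summation of $v$ in the remainder term convergent, collapsing all contributions into $C\|\nabla v\|_{B^{d/p}_{p,\infty}\cap L^\infty}\|f\|_{B^\sigma_{p,r}}$.

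The main obstacle will be the bookkeeping in the paraproduct commutator: one must verify that the surviving interactions obey $|j-j'|\le N_0$, so that the $\ell^r$ sum is a finite--width convolution to which Young's inequality applies, and that the Taylor gain $2^{-j}$ is uniform in $j'$. The remainder estimates and the summations are then routine applications of Hölder and Young; the only additional care needed for the refined inequality is to track the geometric series in the range $\sigma<1+\tfrac d{p_1}$ that guarantees its convergence.
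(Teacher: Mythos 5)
The paper never proves this lemma: it is recalled in the appendix and attributed to \cite{Bahouri2011} (it is Lemma 2.100 there), so your proposal can only be compared with that standard proof. Your strategy is exactly the one used in the cited source: Bony decomposition of both $v^k\partial_k\Delta_j f$ and $\Delta_j(v^k\partial_k f)$, isolation of the paraproduct commutator $[T_{v^k},\Delta_j]\partial_k f$, the kernel representation $\Delta_j g=h_j\star g$ combined with a first-order Taylor expansion and $\int |y|\,|h_j(y)|\,dy\le C2^{-j}$ to gain the factor $2^{-j}$ that compensates the derivative, and direct H\"older/Young estimates for the low--high and remainder terms. That part of your sketch, including the finite-width interaction $|j-j'|\le N_0$ and the mechanism by which $\sigma<1+\frac{d}{p_1}$ makes the high-frequency sum geometric in the second inequality, is correct and coincides with the proof in \cite{Bahouri2011}.

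There is, however, a genuine problem, and it sits exactly at the step you dismissed as routine: the claim that summing the low--high and remainder terms ``reproduces $C\|\nabla f\|_{L^{p_2}}\|v\|_{B^{\sigma-1}_{p_1,r}}$''. It cannot. After H\"older you face sums of the form $\sum_{j'\ge j-N_0}2^{j\sigma}\|\Delta_{j'}v\|_{L^{p_1}}$, and to close them in $\ell^r$ you must write $2^{j\sigma}\|\Delta_{j'}v\|_{L^{p_1}}=2^{(j-j')\sigma}\,2^{j'\sigma}\|\Delta_{j'}v\|_{L^{p_1}}\sim 2^{(j-j')\sigma}\,2^{j'(\sigma-1)}\|\Delta_{j'}\nabla v\|_{L^{p_1}}$; Young's inequality for sequences (using $\sigma>0$) then delivers $\|\nabla f\|_{L^{p_2}}\|\nabla v\|_{B^{\sigma-1}_{p_1,r}}$ --- one full derivative more on $v$ than what is printed. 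In fact the first inequality as stated in the paper is false: take $\sigma>1$, $p_1=\infty$, $p_2=p$, let $f$ be a single Littlewood--Paley block at frequency $2^{j_0}$ and $v$ a single block at frequency $2^{j_1}$ with $j_1\gg j_0$; then the left-hand side is of order $2^{j_1\sigma}\|v\|_{L^\infty}\|\nabla f\|_{L^{p}}\sim 2^{j_1\sigma}2^{j_0}\|v\|_{L^\infty}\|f\|_{L^p}$, while the right-hand side is of order $2^{j_1}2^{j_0\sigma}\|v\|_{L^\infty}\|f\|_{L^p}+2^{j_1(\sigma-1)}2^{j_0}\|v\|_{L^\infty}\|f\|_{L^p}$, and the ratio blows up as $j_1-j_0\to\infty$. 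The statement in the paper is a misquote of Lemma 2.100 in \cite{Bahouri2011}, whose second term reads $\|\nabla f\|_{L^{p_2}}\|\nabla v\|_{B^{\sigma-1}_{p_1,r}}$ (similarly, the refined inequality there carries the norm $B^{d/p_1}_{p_1,\infty}\cap L^{\infty}$, which matches the paper's $B^{d/p}_{p,\infty}\cap L^{\infty}$ only because the paper applies it with $p_1=p$; note also that the paper's applications use the lemma precisely in the regime $\sigma>1$ where the printed form fails, though what they actually invoke there is consistent with the corrected form). Your argument becomes a correct proof verbatim once the target is restored to the version with $\|\nabla v\|_{B^{\sigma-1}_{p_1,r}}$; as written, no bookkeeping can produce the printed bound, because that bound does not hold.
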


\begin{lemma}[Moser estimate \cite{Danchin2001}]\label{me}
Let $1 \leq p,r \leq \infty$.
\begin{enumerate}
\item For $s>0$,
\[
\| fg \|_{B^s_{p,r}} \leq C(\|f\|_{B^s_{p,r}} \|g\|_{L^{\infty}} + \|f\|_{L^{\infty}} \|g\|_{B^s_{p,r}}).
\]
\item $\forall s_1 \leq \frac{1}{p} < s_2$ $(s_2 \geq \frac{1}{p}$ if $r = 1)$  and $s_1 + s_2 > 0$, we have
\[
\| fg \|_{B^{s_1}_{p,r}} \leq C \| f \|_{B^{s_1}_{p,r}} \| g \|_{B^{s_2}_{p,r}}.
\]
\end{enumerate}
\end{lemma}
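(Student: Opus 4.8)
The plan is to derive both product estimates from Bony's paraproduct decomposition. Writing, for the Littlewood--Paley blocks introduced after Lemma \ref{l1},
\[
fg = T_f g + T_g f + R(f,g),\qquad T_f g := \sum_{j} S_{j-1}f\,\Delta_j g,\quad R(f,g):=\sum_{|j-k|\le 1}\Delta_j f\,\Delta_k g,
\]
I would estimate each of the three pieces separately and conclude by the triangle inequality in $B^{s}_{p,r}$; the paraproducts are handled by frequency localization into annuli and the remainder by localization into balls.

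First I would treat the paraproducts. Since the spectrum of $S_{j-1}f\,\Delta_j g$ lies in a fixed dilate $2^{j}\mathcal{C}'$ of an annulus, $\Delta_q(T_f g)$ vanishes unless $|q-j|\le N_0$; combining this with $\|S_{j-1}f\|_{L^\infty}\le C\|f\|_{L^\infty}$ and H\"older's inequality gives $\|\Delta_q(T_f g)\|_{L^p}\le C\|f\|_{L^\infty}\sum_{|q-j|\le N_0}\|\Delta_j g\|_{L^p}$. Multiplying by $2^{qs}$ and taking the $\ell^{r}$ norm yields, for \emph{every} $s\in\mathbb{R}$,
\[
\|T_f g\|_{B^s_{p,r}}\le C\|f\|_{L^\infty}\|g\|_{B^s_{p,r}},\qquad \|T_g f\|_{B^s_{p,r}}\le C\|g\|_{L^\infty}\|f\|_{B^s_{p,r}}.
\]

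For the remainder, regroup $R(f,g)=\sum_{k}\widetilde{\Delta}_k f\,\Delta_k g$ with $\widetilde{\Delta}_k=\Delta_{k-1}+\Delta_k+\Delta_{k+1}$; each summand is now spectrally supported in a \emph{ball} of radius $\sim 2^{k}$, so $\Delta_q R$ only involves the tail $k\ge q-N_1$. To prove part (1) I would bound $\|\widetilde{\Delta}_k f\,\Delta_k g\|_{L^p}\le C\|f\|_{L^\infty}\|\Delta_k g\|_{L^p}$ and use $s>0$ to make $\sum_{k\ge q-N_1}2^{(q-k)s}$ a summable convolution kernel; Young's inequality for series then gives $\|R(f,g)\|_{B^s_{p,r}}\le C\|f\|_{L^\infty}\|g\|_{B^s_{p,r}}$, and adding the paraproduct bounds (with their $f\leftrightarrow g$ symmetric versions) proves (1). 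For part (2) I would first invoke the embedding $B^{s_2}_{p,r}\hookrightarrow L^\infty$, valid because $s_2$ exceeds the critical index $\tfrac{d}{p}$ (or equals it when $r=1$), so that $T_g f$ is controlled by $\|g\|_{B^{s_2}_{p,r}}\|f\|_{B^{s_1}_{p,r}}$; the negative-regularity paraproduct estimate together with $B^{s_1}_{p,r}\hookrightarrow B^{s_1-d/p}_{\infty,\infty}$ (using $s_1\le d/p$) places $T_f g$ in $B^{s_1+(s_2-d/p)}_{p,r}\hookrightarrow B^{s_1}_{p,r}$.

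The main obstacle is the remainder in part (2), where the crude $L^\infty$ bound is too lossy. The decisive point is to estimate $\|\widetilde{\Delta}_k f\,\Delta_k g\|_{L^{p/2}}\le\|\widetilde{\Delta}_k f\|_{L^p}\|\Delta_k g\|_{L^p}$ by H\"older, so that the weighted sum telescopes into $\sum_{k\ge q-N_1}2^{(q-k)(s_1+s_2)}a_k b_k$ with $a_k=2^{ks_1}\|\widetilde{\Delta}_k f\|_{L^p}$ and $b_k=2^{ks_2}\|\Delta_k g\|_{L^p}$; this is exactly where the hypothesis $s_1+s_2>0$ enters to force geometric decay and yield $\|R(f,g)\|_{B^{s_1+s_2}_{p/2,r}}\le C\|f\|_{B^{s_1}_{p,r}}\|g\|_{B^{s_2}_{p,r}}$. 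One then returns to the target space through the Besov embedding $B^{s_1+s_2}_{p/2,r}\hookrightarrow B^{s_1+s_2-d/p}_{p,r}\hookrightarrow B^{s_1}_{p,r}$, the last inclusion using $s_2\ge d/p$. Minor care is still needed at the lowest block $\Delta_{-1}$ and in the endpoint $r=1$, where the critical embedding replaces a strict one; once the three pieces are summed, both stated inequalities follow.
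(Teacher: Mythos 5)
The paper itself never proves this lemma (it is quoted from Danchin's Camassa--Holm paper and used as a black box), so your argument has to stand on its own. Part (1) of your proposal is correct and is the standard proof: the two paraproducts via the $L^\infty$ bound on $S_{j-1}$, the remainder via ball-localization and Young's inequality for series under $s>0$. Also, your reading of the threshold as $\tfrac{d}{p}$ (rather than the paper's $\tfrac1p$, a relic of the one-dimensional source) is the right one. The trouble is in part (2), in both of the terms you treat last. First, the hypothesis allows the endpoint $s_1=\tfrac dp$ for \emph{every} $r$, and there your chain $B^{s_1}_{p,r}\hookrightarrow B^{s_1-d/p}_{\infty,\infty}=B^{0}_{\infty,\infty}$ feeds the paraproduct estimate with a multiplier of regularity exactly zero: the bound $\|T_f g\|_{B^{s_2}_{p,r}}\lesssim\|f\|_{B^{0}_{\infty,\infty}}\|g\|_{B^{s_2}_{p,r}}$ is false (the negative-regularity paraproduct theorem needs a strictly negative index, and $B^{d/p}_{p,r}\not\hookrightarrow L^\infty$ when $r>1$). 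This case is recoverable, but by a different argument: $\|S_{j-1}f\|_{L^\infty}\lesssim\sum_{k\le j-2}2^{kd/p}\|\Delta_k f\|_{L^p}\lesssim (j+2)^{1-1/r}\|f\|_{B^{d/p}_{p,r}}$ grows at most polynomially in $j$, which is beaten by the geometric factor $2^{j(d/p-s_2)}$ coming from $s_2>\tfrac dp$; your proposal does not contain this, and your closing caveat (lowest block, $r=1$) does not cover it.

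Second, and more seriously, your remainder estimate silently assumes $p\ge 2$. For $1\le p<2$ the exponent $p/2$ is below $1$, so $L^{p/2}$ and $B^{s_1+s_2}_{p/2,r}$ are only quasi-normed: the triangle inequality needed to sum $\Delta_q R=\sum_{k\ge q-N_1}\Delta_q(\widetilde{\Delta}_k f\,\Delta_k g)$ fails, and so does Young's inequality, i.e.\ the uniform $L^{p/2}$-boundedness of $\Delta_q$. This gap cannot be patched, because the inequality you are trying to prove is actually false in that regime. Take $\psi$ real and Schwartz with Fourier support in an annulus, and set $f_N=2^{-Ns_1}2^{Nd/p}\psi(2^Nx)$, $g_N=2^{-Ns_2}2^{Nd/p}\psi(2^Nx)$, so that $\|f_N\|_{B^{s_1}_{p,r}}\|g_N\|_{B^{s_2}_{p,r}}\lesssim 1$. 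Then $\widehat{f_Ng_N}(\xi)=m_N\,\widehat{\psi^2}(2^{-N}\xi)$ with $m_N=2^{N(2d/p-d-s_1-s_2)}$, and since $\widehat{\psi^2}(0)=\|\psi\|_{L^2}^2>0$ one gets $\|f_Ng_N\|_{B^{s_1}_{p,r}}\ge\|\Delta_0(f_Ng_N)\|_{L^p}\gtrsim m_N$, which blows up whenever $s_1+s_2<d(\tfrac2p-1)$; this range is compatible with $s_1\le\tfrac dp<s_2$ and $s_1+s_2>0$ precisely when $p<2$. This is why careful statements of this product law (e.g.\ Bahouri--Chemin--Danchin, Corollary 2.86) carry the hypothesis $s_1+s_2>d\max(0,\tfrac2p-1)$. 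So your proof of part (2) is valid only under the additional restriction $p\ge2$ (or that strengthened sum condition) together with the $s_1=\tfrac dp$ fix above; as it happens, every invocation of part (2) in this paper has $p=2$, so nothing downstream is affected, but the lemma in the generality stated cannot be proved by your argument or any other.
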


\begin{lemma}[Osgood Lemma \cite{Chemin1998}]\label{Osgood}
Let $\rho \geq 0$ be a measurable function, $\gamma > 0$ be a locally integrable function and $\mu$ be a increasing continuous function. Suppose that
\[
\rho(t) \leq a + \int_{t_0}^{t} \gamma(s) \mu( \rho(s)) ds, \quad \text{for some } a \geq 0.
\]
\begin{enumerate}
\item If $a>0$, then we have
\[
-\mathcal{M}(\rho(t)) + \mathcal{M}(a) \leq \int_{t_0}^{t} \gamma(s) \,  {d}s, \quad \mathcal{M} (x) \stackrel{ {def}}{=} \int_{x}^{1} \frac{1}{\mu(r)} \,  {d}r.
\]
\item If $a=0$ and $\mu$ satisfies the condition $\int_{0}^{1} \frac{dr}{\mu(r)}dr = +\infty$, then $\rho \equiv 0$.
\end{enumerate}
\end{lemma}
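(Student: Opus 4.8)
The plan is to reduce both statements to a first-order differential inequality for the majorant of $\rho$ and then to integrate it against the decreasing primitive $\mathcal{M}$. First I would set
\[
R(t) \overset{\text{def}}{=} a + \int_{t_0}^{t} \gamma(s)\mu(\rho(s))\,ds,
\]
so that by hypothesis $\rho(t)\le R(t)$ for all $t$, and $R$ is absolutely continuous on every compact interval because $\gamma$ is locally integrable and the integrand $\gamma\,\mu(\rho)$ is (by the standing assumption that the right-hand side of the hypothesis is finite) locally integrable. Consequently $R'(t)=\gamma(t)\mu(\rho(t))$ for almost every $t$. Since $\mu$ is increasing and $\rho(t)\le R(t)$, and since in case (i) we have $R(t)\ge a>0$ so that $\mu(R(t))>0$, one may divide to obtain the almost-everywhere inequality $R'(t)/\mu(R(t))\le \gamma(t)$.

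For case (i), I would recognise the left-hand side as a total derivative: because $\mathcal{M}(x)=\int_x^1 dr/\mu(r)$ satisfies $\mathcal{M}'(x)=-1/\mu(x)$, the chain rule gives $\frac{d}{dt}\mathcal{M}(R(t))=-R'(t)/\mu(R(t))\ge -\gamma(t)$ almost everywhere. Integrating over $[t_0,t]$ and using $R(t_0)=a$ yields
\[
\mathcal{M}(R(t))-\mathcal{M}(a)\ge -\int_{t_0}^{t}\gamma(s)\,ds.
\]
Finally, since $\mathcal{M}$ is decreasing and $\rho(t)\le R(t)$, one has $\mathcal{M}(\rho(t))\ge \mathcal{M}(R(t))$, and combining the two displays gives precisely $-\mathcal{M}(\rho(t))+\mathcal{M}(a)\le \int_{t_0}^{t}\gamma(s)\,ds$, as claimed.

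For case (ii) with $a=0$, I would argue by a limiting procedure. For every $\varepsilon>0$ the hypothesis with $a=0$ gives $\rho(t)\le \varepsilon+\int_{t_0}^{t}\gamma(s)\mu(\rho(s))\,ds$, so case (i) applies with $a=\varepsilon$ and produces $\mathcal{M}(\rho(t))\ge \mathcal{M}(\varepsilon)-\int_{t_0}^{t}\gamma(s)\,ds$. The Osgood divergence condition $\int_0^1 dr/\mu(r)=+\infty$ means $\mathcal{M}(\varepsilon)\to+\infty$ as $\varepsilon\to0^{+}$. If $\rho(t)>0$ at some $t$, then $\mathcal{M}(\rho(t))$ is a fixed finite number, whereas the right-hand side tends to $+\infty$ as $\varepsilon\to0^{+}$, a contradiction. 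Therefore $\rho(t)=0$ for every $t$, i.e.\ $\rho\equiv0$.

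The main technical obstacle I expect is not the algebra but the regularity bookkeeping: one must justify that $R$ is absolutely continuous and differentiable almost everywhere with $R'=\gamma\,\mu(\rho)$, that the chain-rule identity for $\mathcal{M}(R(t))$ holds on a set of full measure (which requires $\mu$ to be strictly positive on $(0,\infty)$, guaranteed in case (i) by $R\ge a>0$), and that the resulting inequality $R'/\mu(R)\le\gamma$ is integrable so that the fundamental theorem of calculus may be applied. Once these measure-theoretic points are settled, both conclusions follow mechanically from the monotonicity of $\mathcal{M}$ and the divergence of its integral at the origin.
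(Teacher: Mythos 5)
Your proof is correct. Note that the paper itself offers no proof of this lemma: it is stated in the appendix as background material and cited to Chemin's book, so there is nothing to compare against; your argument is the standard one found there (majorize $\rho$ by the absolutely continuous function $R$, observe $R'/\mu(R)\le\gamma$ a.e., recognize the left side as $-\frac{d}{dt}\mathcal{M}(R)$, and integrate, with case (2) following from case (1) by letting $a=\varepsilon\to 0^+$ and using divergence of $\mathcal{M}$ at the origin). The only caveat, which you yourself flag, is that one must use the standing assumption that $\mu$ is strictly positive on $(0,\infty)$ (i.e., that $\mu$ is a genuine modulus of continuity, as in the paper's applications where $\mu(x)=x\log(e+C/x)$); the paper's loose phrasing ``increasing continuous function'' does not state this explicitly, but without it the division by $\mu(R)$ and the definition of $\mathcal{M}$ would fail.
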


\begin{lemma}[Fatou-type lemma \cite{Bahouri2011}]\label{lem:FT}
Let $s \in \mathbb{R}$, $p,r \in [1,\infty]^2$. If $(f_k)_{k \geq 1}$ is a bounded sequence in $B^{s}_{p,r}$, and $f_k \overset{\mathscr{S}'}{\to} f$ as $k \to \infty$, where $\mathscr{S}'$ is the tempered distribution space.Then $f \in B^{s}_{p,r} $ and
$$
\| f \|_{B^{s}_{p,r}} \leq C \mathop{\lim\inf}\limits_{k \to \infty} \| f_k \|_{B^{s}_{p,r}}.
$$
\end{lemma}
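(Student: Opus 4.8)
The plan is to reduce the statement to two successive applications of the classical Fatou lemma — one over the physical space $\mathbb{R}^d$ and one over the dyadic frequency index $q$ — after first upgrading the distributional convergence $f_k \to f$ in $\mathscr{S}'$ to genuine pointwise convergence at the level of each Littlewood-Paley block. The crucial structural fact I would use is that each $\Delta_q$ acts by convolution against a fixed Schwartz kernel (namely $h_q(y)=2^{qd}h(2^q y)$ with $h=\mathcal{F}^{-1}\varphi$ for $q\geq 0$, and $\tilde h=\mathcal{F}^{-1}\chi$ for $q=-1$), as recorded in the appendix; this smoothing property is what converts weak-$*$ convergence into something strong enough to take norms.

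First I would fix $q \geq -1$ and write, for every $x \in \mathbb{R}^d$,
$$(\Delta_q f_k)(x) = \langle f_k, h_q(x - \cdot)\rangle.$$
Since $y \mapsto h_q(x-y)$ belongs to $\mathscr{S}(\mathbb{R}^d)$ and $f_k \to f$ in $\mathscr{S}'$, the right-hand side converges to $\langle f, h_q(x-\cdot)\rangle = (\Delta_q f)(x)$. Hence, for each fixed $q$, the band-limited functions $\Delta_q f_k$ converge to $\Delta_q f$ pointwise everywhere on $\mathbb{R}^d$.

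Next I would pass from this pointwise convergence to lower semicontinuity of the $L^p$ norm. For $1 \leq p < \infty$, the classical Fatou lemma applied to $|\Delta_q f_k|^p$ on $(\mathbb{R}^d,dx)$ yields $\|\Delta_q f\|_{L^p} \leq \liminf_{k} \|\Delta_q f_k\|_{L^p}$; for $p=\infty$ the same conclusion follows from the pointwise bound $|\Delta_q f(x)| = \lim_k |\Delta_q f_k(x)| \leq \liminf_k \|\Delta_q f_k\|_{L^\infty}$ and then a supremum over $x$. Writing $a_q := 2^{qs}\|\Delta_q f\|_{L^p}$ and $a_q^{(k)} := 2^{qs}\|\Delta_q f_k\|_{L^p}$, this reads $a_q \leq \liminf_k a_q^{(k)}$ for every $q$.

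Finally I would invoke Fatou a second time, now for the counting measure on $\{q\geq -1\}$. For $r<\infty$, monotonicity and continuity of $t\mapsto t^r$ give $a_q^r \leq \liminf_k (a_q^{(k)})^r$, and Fatou for series yields
$$\|f\|_{B^s_{p,r}}^r = \sum_q a_q^r \leq \sum_q \liminf_k (a_q^{(k)})^r \leq \liminf_k \sum_q (a_q^{(k)})^r = \liminf_k \|f_k\|_{B^s_{p,r}}^r,$$
while for $r=\infty$ one simply takes the supremum over $q$. In either case $\|f\|_{B^s_{p,r}} \leq \liminf_k \|f_k\|_{B^s_{p,r}}<\infty$, so $f \in B^s_{p,r}$, and the asserted constant may in fact be taken to be $C=1$. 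The only genuinely delicate point — the \emph{main obstacle} — is the first step: one must use that every dyadic block is convolution against a fixed Schwartz function in order to turn convergence in $\mathscr{S}'$ into honest pointwise convergence, after which the remainder is two routine Fatou arguments. I would also emphasize that the \emph{nonhomogeneous} framework keeps the proof clean, since there is no ambiguity modulo polynomials, and the assumed boundedness of $(f_k)$ in $B^s_{p,r}$ guarantees the liminf on the right is finite.
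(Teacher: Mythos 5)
Your proof is correct, and in fact slightly stronger than the stated lemma since your argument yields the constant $C=1$. Note, however, that the paper itself offers no proof to compare against: Lemma \ref{lem:FT} appears in the appendix as a quoted result, cited from \cite{Bahouri2011} (it is Theorem 2.72 in that book), so the only meaningful benchmark is the standard textbook argument — which is essentially what you gave. Your three steps are all sound: the identity $(\Delta_q f_k)(x)=\langle f_k, h_q(x-\cdot)\rangle$ with a fixed Schwartz kernel correctly upgrades convergence in $\mathscr{S}'$ to pointwise convergence of each dyadic block (this is indeed the one nontrivial idea); Fatou on $\mathbb{R}^d$ (with the separate, correct handling of $p=\infty$ via pointwise limits) gives $\|\Delta_q f\|_{L^p}\leq \liminf_k\|\Delta_q f_k\|_{L^p}$ for each $q$; and Fatou for the counting measure in $q$ (again with the $r=\infty$ case treated by a supremum) transfers this to the $\ell^r$-weighted norm, with the boundedness hypothesis ensuring the right-hand side is finite so that $f\in B^s_{p,r}$.
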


Now let us state some useful results in the transport equation theory in Besov spaces, which are crucial to the proofs of our main theorems.
\begin{lemma}[\cite{Bahouri2011}] \label{prior estimates}
	Assume that $p,r \in [1, \infty]^2$ and $s > -\frac{d}{p}$. Let $v$ be a vector field such that $\nabla v$ belongs to $L^1([0,T];B^{s-1}_{p,r})$ if $s > 1+\frac{d}{p}$ or to $L^1([0,T];B^{\frac{d}{p}}_{p,r} \cap L^{\infty})$ otherwise. Suppose also that $f_0 \in B^s_{p,r}$, $F \in L^1([0,T];B^s_{p,r})$ and that $f \in L^{\infty} ([0,T];B^s_{p,r}) \cap C([0,T];\mathscr{S}')$ solves the $d$-dimensional linear transport equations
	\begin{equation}\label{l2}
		\left\{
		\begin{aligned}
			&\partial_t f + v \cdot \nabla f = F, \\
			& f|_{t=0} =f_0.
		\end{aligned}
		\right.
	\end{equation}
	Then there exists a constant C depending only on $s$, $p$ and $d$ such that the following statements hold:
	\begin{enumerate}
		\item If $r =1$ or $s \ne 1+ \frac{d}{p}$, then
		\[
		\| f(t) \|_{B^s_{p,r}} \leq \| f_0 \|_{B^s_{p,r}} + \int_{0}^{t} \| F(\tau) \|_{B^s_{p,r}} d\tau + C \int_{0}^{t} V'(\tau) \|f(\tau) \|_{B^s_{p,r}} d\tau,
		\]
		or
		\begin{equation}\label{l3}
			\| f(t) \|_{B^s_{p,r}} \leq e^{CV(t)} \left(\| f_0 \|_{B^s_{p,r}} +  \int_{0}^{t} e^{-CV(\tau)} \| F(\tau) \|_{B^s_{p,r}} d\tau \right),
		\end{equation}
		holds, where $V(t) = \int_{0}^{t} \| \nabla v(\tau) \|_{B^{\frac{d}{p}}_{p,r} \cap L^{\infty}} d\tau$ if $s < 1+\frac{d}{p}$ and $V(t) = \int_{0}^{t} \| \nabla v(\tau) \|_{B^{s-1}_{p,r}} d\tau$ else.
		
		\item If $s \leq 1+\frac{d}{p}$ and, in addition, $\nabla f_0 \in L^{\infty}$, $\nabla f \in L^{\infty}([0,T] \times \mathbb{R}^d)$ and $\nabla F \in L^1([0,T];L^{\infty})$, then
		\begin{equation}
			\begin{aligned}
				\| & f(t) \|_{B^s_{p,r}} + \| \nabla f(t) \|_{L^{\infty}} \\
				& \leq e^{CV(t)} \left(\| f_0 \|_{B^s_{p,r}} + \| \nabla f_0 \|_{L^{\infty}} + \int_{0}^{t} e^{-CV(\tau) } (\| F(\tau) \|_{B^s_{p,r}} + \| \nabla F(\tau) \|_{L^{\infty}})d\tau\right), \notag
			\end{aligned}
		\end{equation}
		with $V(t) = \int_{0}^{t} \| \nabla v(\tau) \|_{B^{\frac{d}{p}}_{p,r} \cap L^{\infty}} d\tau$.
		\item If $f = v$, then for all $s>0$, the estimate \eqref{l3} holds with $V(t) = \int_{0}^{t} \| \nabla v(\tau) \|_{L^{\infty}} d\tau$.
		
		\item If $r < + \infty$, then $f \in C([0,T];B^s_{p,r})$. If $r = +\infty $, then $f \in C([0,T];B^{s'}_{p,1})$ for all $s' < s$.
	\end{enumerate}
\end{lemma}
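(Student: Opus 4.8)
The plan is to reduce the estimate to a frequency-localized $L^p$ energy bound and then to sum over dyadic blocks by means of the commutator estimate in Lemma~\ref{ce}. First I would localize the equation in frequency: applying the dyadic block $\Delta_j$ to \eqref{l2} yields
\[
\partial_t \Delta_j f + v\cdot\nabla \Delta_j f = \Delta_j F + R_j, \qquad R_j := [v\cdot\nabla,\Delta_j]f,
\]
so that each $\Delta_j f$ solves a transport equation along the same flow, forced by $\Delta_j F + R_j$. The whole point is that the commutator $R_j$, though built from $v\cdot\nabla$, behaves like a term of order $s$ rather than $s+1$.

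Next I would derive the single-block estimate. Multiplying by $|\Delta_j f|^{p-2}\Delta_j f$ and integrating over $\mathbb{R}^d$, the transport term contributes, after integration by parts, only the lower-order quantity $\tfrac1p\int |\Delta_j f|^p\,\mathrm{div}\,v\,dx$, so that
\[
\frac{d}{dt}\|\Delta_j f\|_{L^p} \le \|\Delta_j F\|_{L^p} + \|R_j\|_{L^p} + \tfrac1p\|\mathrm{div}\,v\|_{L^\infty}\|\Delta_j f\|_{L^p};
\]
the cases $p=1,\infty$ follow by the usual limiting/duality argument. I would then multiply by $2^{js}$, take the $\ell^r(j\ge -1)$ norm, and invoke Lemma~\ref{ce} to control $\big\|(2^{js}\|R_j\|_{L^p})_j\big\|_{\ell^r}$: for $s>1+\tfrac{d}{p}$ this is bounded by $C\|\nabla v\|_{B^{s-1}_{p,r}}\|f\|_{B^s_{p,r}}$, while in the range $-\tfrac{d}{p}<s\le 1+\tfrac{d}{p}$ the second inequality of Lemma~\ref{ce} gives $C\|\nabla v\|_{B^{d/p}_{p,r}\cap L^\infty}\|f\|_{B^s_{p,r}}$. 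Absorbing $\|\mathrm{div}\,v\|_{L^\infty}$ into the same quantity $V'(t)$, I arrive at
\[
\frac{d}{dt}\|f(t)\|_{B^s_{p,r}} \le \|F(t)\|_{B^s_{p,r}} + C\,V'(t)\,\|f(t)\|_{B^s_{p,r}},
\]
from which direct integration gives the first bound of part (1), and Gronwall's lemma gives the exponential form \eqref{l3}. Part (3), with $f=v$, runs identically once one notes that $\|\nabla v\|_{L^\infty}$ already closes the commutator for $s>0$, and part (2) follows by applying the same scheme to the pair $(f,\nabla f)$ and using the additional Lipschitz hypotheses.

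For the time-continuity statement in part (4), I would argue by approximation: mollify $f_0$, $v$ and $F$ so that the solution is smooth and genuinely continuous in time, propagate the uniform bound just obtained, and then pass to the limit using the Fatou-type lemma (Lemma~\ref{lem:FT}) together with the equation itself to recover strong continuity in $B^s_{p,r}$ when $r<\infty$, and in $B^{s'}_{p,1}$ for every $s'<s$ when $r=\infty$. The main obstacle throughout is the commutator estimate and, in particular, the borderline index $s=1+\tfrac{d}{p}$: because the skew-symmetry of the transport operator is only approximate, the entire argument hinges on showing that $R_j$ does not cost a full derivative, which is exactly what Lemma~\ref{ce} encodes and precisely why the endpoint $s=1+\tfrac{d}{p}$ with $r\neq 1$ must be excluded in part (1).
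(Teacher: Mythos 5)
There is no in-paper proof to compare against here: this lemma is recalled verbatim in the appendix as a known result cited from \cite{Bahouri2011} (Bahouri--Chemin--Danchin), and the paper relies on it as a black box. Your outline reproduces the standard proof from that reference --- frequency localization $\partial_t\Delta_j f + v\cdot\nabla\Delta_j f = \Delta_j F + [v\cdot\nabla,\Delta_j]f$, the per-block $L^p$ energy estimate producing the $\tfrac1p\|\operatorname{div}v\|_{L^\infty}$ term, weighting by $2^{js}$, summing in $\ell^r$ via the commutator estimate, and Gronwall --- and it is essentially correct, including the explanation of why the endpoint $s=1+\tfrac{d}{p}$, $r\neq 1$ is excluded. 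One caveat worth flagging: the commutator estimate as recalled in this paper (Lemma~\ref{ce}) is stated only for $\sigma>0$, whereas you invoke it on the whole range $-\tfrac{d}{p}<s\le 1+\tfrac{d}{p}$; to cover negative regularities $-\tfrac{d}{p}<s\le 0$ and the endpoint case $s=1+\tfrac{d}{p}$, $r=1$ one needs the fuller version of the commutator estimate in \cite{Bahouri2011} (Lemma 2.100 there), not the weakened form quoted in this appendix. Likewise, your mollification-plus-Fatou argument for part (4) gives the uniform bound but by itself only yields $L^\infty_t B^s_{p,r}$; the strong time-continuity for $r<\infty$ is usually concluded by combining continuity of each finite block $\Delta_j f$ in $L^p$ (read off from the equation) with uniform smallness of the high-frequency tails, which is the route taken in the cited source.
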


\begin{lemma}[\cite{Bahouri2011}]\label{euth}
Let $(p,p_1,r) \in [1,\infty]^3$. Assume that $s > -d \min \{\frac{1}{p_1}, \frac{1}{p'}\}$ with $p' = (1 - \frac{1}{p})^{-1}$. Let $f_0 \in B^s_{p,r}$ and $F \in L^1 ([0,T];B^s_{p,r})$. Let $v \in L^{\rho} ([0,T]; B^{-M}_{\infty,\infty})$ for some $\rho >1$, $M>0$ and $\nabla v \in L^1([0,T];B^{\frac{d}{p_1}}_{p_1,\infty} \cap L^{\infty})$ if $s < 1+\frac{d}{p_1}$, and $\nabla v \in L^1([0,T];B^{s-1}_{p_1,r})$ if $s > 1+\frac{d}{p_1}$ or $s = 1+\frac{d}{p_1}$ and $r=1$. Then \eqref{l2} has a unique solution $f \in L^{\infty} ([0,T];B^s_{p,r}) \cap (\cap_{s' < s} C([0,T];B^{s'}_{p,1}))$ and the inequalities in Lemma \ref{prior estimates} hold true. If, moreover, $r < \infty$, then we have $f \in C([0,T];B^s_{p,r})$.
\end{lemma}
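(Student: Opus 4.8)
The plan is to separate uniqueness from existence and to use the a priori estimates of Lemma \ref{prior estimates} as the main engine throughout. For uniqueness, suppose $f_1,f_2$ are two solutions in $L^\infty([0,T];B^s_{p,r})\cap C([0,T];\mathscr{S}')$ corresponding to the same data $f_0,F$. Then $\delta f=f_1-f_2$ solves the homogeneous equation $\partial_t\delta f+v\cdot\nabla\delta f=0$ with $\delta f|_{t=0}=0$, and the hypotheses on $\nabla v$ are precisely those demanded by Lemma \ref{prior estimates}. Applying the estimate at the appropriate regularity (at level $s$ when $r=1$ or $s\neq 1+\frac{d}{p}$, and one derivative lower in the borderline case) gives $\|\delta f(t)\|\le e^{CV(t)}\|\delta f(0)\|=0$ for all $t$, so $f_1\equiv f_2$.

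For existence I would run a regularization scheme. Set $v^n=S_nv$, $f_0^n=S_nf_0$, $F^n=S_nF$, so that $v^n$ is spatially smooth and Lipschitz for each fixed $n$. For such a field the transport problem $\partial_t f^n+v^n\cdot\nabla f^n=F^n$, $f^n|_{t=0}=f_0^n$, is solved explicitly along the flow $\psi^n$ generated by $v^n$, producing a unique solution $f^n$ that is smooth in space and lies in $C([0,T];B^{s'}_{p,1})$ for every $s'$. Because the truncations $S_n$ are uniformly bounded on all the spaces in the hypotheses, one has $\|\nabla v^n\|_{L^1_T(B^{s-1}_{p_1,r})}\le C\|\nabla v\|_{L^1_T(B^{s-1}_{p_1,r})}$ (and the analogous bound in the critical case), so feeding this into Lemma \ref{prior estimates} yields a bound $\|f^n(t)\|_{B^s_{p,r}}\le e^{CV(t)}\bigl(\|f_0\|_{B^s_{p,r}}+\int_0^t e^{-CV(\tau)}\|F(\tau)\|_{B^s_{p,r}}\,d\tau\bigr)$ that is uniform in $n$.

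Next I would establish convergence. The difference obeys $\partial_t(f^{n+m}-f^n)+v^{n+m}\cdot\nabla(f^{n+m}-f^n)=(F^{n+m}-F^n)-(v^{n+m}-v^n)\cdot\nabla f^n$, to which I apply Lemma \ref{prior estimates} one derivative lower, in $B^{s-1}_{p,r}$. Here $F^{n+m}-F^n\to0$ in $L^1_T(B^{s-1}_{p,r})$, while the term $(v^{n+m}-v^n)\cdot\nabla f^n$ is controlled by the product laws of Lemma \ref{me} together with the uniform $B^s$ bound on $f^n$, and tends to $0$ since $v^n\to v$ in the relevant norm. Gronwall's inequality then shows $(f^n)$ is Cauchy in $C([0,T];B^{s-1}_{p,r})$; denote its limit by $f$. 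The Fatou-type property (Lemma \ref{lem:FT}) upgrades the uniform bound to $f\in L^\infty([0,T];B^s_{p,r})$, and passing to the limit in the equation is legitimate because $f^n\to f$ strongly in $B^{s-1}_{p,r}\hookrightarrow\mathscr{S}'$ and $v^n\to v$, so $f$ solves \eqref{l2}. The time-continuity assertions, namely $f\in C([0,T];B^s_{p,r})$ when $r<\infty$ and $f\in\cap_{s'<s}C([0,T];B^{s'}_{p,1})$ in general, then follow directly from statement (4) of Lemma \ref{prior estimates}.

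The step I expect to be most delicate is the convergence argument, and in particular reconciling the weak hypothesis $v\in L^\rho([0,T];B^{-M}_{\infty,\infty})$ with passing to the limit in the transport term: this low-regularity control is what guarantees that the flows $\psi^n$ are well defined and stable and that the bilinear product $v^n\cdot\nabla f^n$ converges to $v\cdot\nabla f$ in $\mathscr{S}'$ without demanding regularity of $v$ beyond $\nabla v\in L^1_T(B^{s-1}_{p_1,r})$. Care is also needed in the borderline case $s=1+\frac{d}{p_1}$ with $r=1$, where the space in which $V(t)$ is measured must be matched to the hypotheses exactly so that the a priori estimate remains applicable.
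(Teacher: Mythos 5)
The paper itself contains no proof of Lemma \ref{euth}: it is imported verbatim from \cite{Bahouri2011} (the existence--uniqueness theorem for linear transport equations in Chapter 3 of that monograph), so the only meaningful comparison is with the proof given there. Your outline reproduces that proof in all essentials --- uniqueness by applying the a priori estimates of Lemma \ref{prior estimates} to the difference of two solutions of the homogeneous equation; existence by spectral truncation of $(v,f_0,F)$, solving the smoothed problems along the flow, uniform bounds from Lemma \ref{prior estimates} (using that $S_n$ is uniformly bounded on the relevant spaces), a Cauchy-sequence argument one derivative lower, the Fatou property of Lemma \ref{lem:FT}, and item (4) of Lemma \ref{prior estimates} for the time-continuity statements --- so the strategy is exactly the intended one, and it closes as written in the regularity regime where this paper actually invokes the lemma ($s>1+\frac{d}{p}$, or $s=1+\frac{d}{2}$, $p=2$, $r=1$).

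The one place where your sketch is genuinely thinner than the source is the product estimate in the convergence step. The lemma is stated for all $s>-d\min\{\frac{1}{p_1},\frac{1}{p'}\}$ and allows $p_1\neq p$, so $B^{s-1}_{p,r}$ need not be an algebra and may even have negative index; in that full range the bound on $\|(v^{n+m}-v^n)\cdot\nabla f^n\|_{B^{s-1}_{p,r}}$ cannot be obtained from the Moser estimates of Lemma \ref{me} (part (1) needs positive regularity, part (2) imposes index constraints that fail here), but requires the paraproduct and remainder estimates of \cite{Bahouri2011} --- the same machinery that underlies Lemma \ref{prior estimates} itself. This is a repairable technical point rather than a flaw in the approach, but a complete proof of the statement in its stated generality would have to replace the appeal to Lemma \ref{me} by those estimates, with the low-frequency part of $v$ controlled through the hypothesis $v\in L^{\rho}([0,T];B^{-M}_{\infty,\infty})$, exactly as you anticipate in your closing remarks.
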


\section*{Acknowledgements}

This work was partially supported by the National Key Research and Development Program of China (Grant No. 2023YFC2206100).

\bibliographystyle{plain}%
\bibliography{myref}

\end{document}